\documentclass{amsart}
\usepackage{amssymb}
\usepackage{mathrsfs}
\usepackage{graphicx}
\usepackage{url}
\usepackage{color}

\newtheorem{theorem}{Theorem}[section]
\newtheorem{proposition}[theorem]{Proposition}
\newtheorem{corollary}[theorem]{Corollary}
\newtheorem{lemma}[theorem]{Lemma}
\theoremstyle{remark}
\newtheorem{definition}[theorem]{Definition}
\newtheorem{remark}[theorem]{Remark}
\newtheorem{example}[theorem]{Example}
\newtheorem{problem}[theorem]{Problem}

\numberwithin{equation}{section}

\begin{document}

\begin{abstract}
We investigate equiangular lines in finite orthogonal geometries, focusing specifically on equiangular tight frames (ETFs).
In parallel with the known correspondence between real ETFs and strongly regular graphs (SRGs) that satisfy certain parameter constraints, we prove that ETFs in finite orthogonal geometries are closely aligned with a modular generalization of SRGs.
The constraints in our finite field setting are weaker, and all but~18 known SRG parameters on $v \leq 1300$ vertices satisfy at least one of them.
Applying our results to triangular graphs, we deduce that Gerzon's bound is attained in finite orthogonal geometries of infinitely many dimensions.
We also demonstrate connections with real ETFs, and derive necessary conditions for ETFs in finite orthogonal geometries.
As an application, we show that Gerzon's bound cannot be attained in a finite orthogonal geometry of dimension~5.
\end{abstract}

\title[Frames over finite fields: Equiangular lines in orthogonal geometry]{Frames over finite fields: \\ Equiangular lines in orthogonal geometry}
\author{Gary R.~W.~Greaves}
\address{School of Physical and Mathematical Sciences, Nanyang Technological University, Singapore 637371}
\email{gary@ntu.edu.sg}
\author{Joseph W.~Iverson}
\address{Department of Mathematics, Iowa State University, Ames, IA USA 50011}
\email{jwi@iastate.edu}
\author{John Jasper}
\address{Department of Mathematics and Statistics, South Dakota State University, Brookings, SD USA 57007}
\email{john.jasper@sdstate.edu}
\author{Dustin G.~Mixon}
\address{Department of Mathematics, The Ohio State University, Columbus, OH USA 43210}
\email{mixon.23@osu.edu}

\keywords{equiangular lines, equiangular tight frames, finite fields, strongly regular graphs}
\subjclass[2010]{Primary: 51E99. Secondary: 05E30, 42C15, 52C35.}

\maketitle

\section{Introduction}

Over the real number field, a sequence $\mathscr{L} = \{ \ell_j \}_{j \in [n]}$ of lines through the origin of $\mathbb{R}^d$ is called \textbf{equiangular} if there are constants $a,b \geq 0$ and nonzero column vectors $\varphi_j \in \ell_j$ such that $\varphi_j^\top \varphi_j = a$ and $(\varphi_i^\top \varphi_j)^2 = b$ for every $i \neq j$ in $[n]:= \{ 1,\dotsc,n\}$.
As a special instance, the representatives $\{ \varphi_j \}_{j\in [n]}$ are said to form an \textbf{equiangular tight frame} (ETF) of size $d\times n$ if $\mathscr{L}$ is equiangular and the matrix $\Phi = \left[ \begin{array}{ccc} \varphi_1 & \cdots & \varphi_n \end{array} \right] \in \mathbb{R}^{d\times n}$ satisfies $\Phi \Phi^\top = cI$ for some~$c$.
ETFs are known to create optimal packings in projective space $\mathbb{R}\mathrm{P}^{d-1}$, and for this reason and others they find applications in areas such as compressed sensing~\cite{BFMW13}, wireless communication~\cite{SH03}, digital fingerprinting~\cite{MQKF13}, and quantum information theory~\cite{RBSC04}.

In this paper, we investigate the related phenomenon of ETFs over finite fields (Definition~\ref{def: ETF}).
Frames and equiangular lines over finite fields are introduced in the companion paper~\cite{FFF1}, which develops the basic theory and continues on to study ETFs in unitary geometries (akin to complex ETFs).
Here we focus on ETFs in orthogonal geometries, which are the finite field analogs of real ETFs.

In the real setting, ETFs are deeply connected with the theory of strongly regular graphs (SRGs).
Using a construction of Seidel, an SRG with parameters $(v,k,\lambda,\mu)$ creates a real ETF with $n = v+1$ vectors whenever $k = 2 \mu$, and, up to a change of line representatives, every ETF of $n > d + 1$ vectors in $\mathbb{R}^d$ arises this way~\cite{S91,T77,W09}.
In another construction, an SRG with parameters $(v,k,\lambda,\mu)$ creates a real ETF with $n = v$ vectors whenever $v = 4k - 2\lambda - 2\mu$, and this correspondence uniquely characterizes real ETFs with \emph{centroidal symmetry}~\cite{T77,FJMPW18}.
From this perspective, real ETFs may be viewed as a subclass of SRGs, where about 27\% of known SRG parameters on $v \leq 1300$ vertices satisfy one of the constraints $k = 2\mu$ or $v = 4k - 2\lambda - 2\mu$~\cite{B}.
This relationship has proven to be fruitful for the theories of both SRGs and ETFs, and results from each area have been used to establish both existence and nonexistence in the other~\cite{B,FM:T,Mak02,STDH07,FJMP18,AM18,AM20}.

A similar phenomenon occurs for ETFs in finite orthogonal geometries, but in this setting we encounter the more general notion of a \textbf{modular strongly regular graph} (Definition~\ref{def: mod SRG}).
As with SRGs, modular SRGs are described by parameters $(v,k,\lambda,\mu)$, and an ETF can be obtained if and only if constraints like those above are satisfied (Theorem~\ref{thm: Seidel Waldron} and Theorem~\ref{thm: centered or axial}).
However, in the finite field setting these parameter constraints need only be satisfied modulo the field characteristic.
Thanks to this weaker condition, a much larger fraction of SRGs can be identified with ETFs over finite fields, and we observe that 98\% of known SRG parameters on $v \leq 1300$ vertices occur in this way (Remark~\ref{rem: SRGs from ETFs}).

These constructions produce remarkable examples of ETFs over finite fields.
In the real setting, \textbf{Gerzon's bound} states that $\mathbb{R}^d$ admits a system of $n$ equiangular lines only if $n \leq \binom{d+1}{2}$.
Saturation is empirically rare, and examples of $n = \binom{d+1}{2}$ equiangular lines in $\mathbb{R}^d$ are known only for $d \in \{2,3,7,23\}$.
It is an open problem whether the bound can be attained in any other dimension, and it is known that this can happen only for $d \geq 119$ such that $d+2$ is an odd square~\cite{LS73,Mak02,BMV04}.
Gerzon's bound also applies in our finite field setting (Proposition~\ref{prop: Gerzon}).
However, we now find a completely different behavior: when applied to triangular graphs, our results show that Gerzon's bound is saturated in some $d$-dimensional finite orthogonal geometry whenever $|d-7|$ is not a power of~2 (Theorem~\ref{thm: triangular gerzon}).

This is all the more surprising since we show that ETFs in finite orthogonal geometries are closely aligned with real ETFs.
If a $d\times n$ real ETF exists, then $d\times n$ ETFs exist in finite orthogonal geometries of all but finitely many characteristics (Proposition~\ref{prop: project real ETF}).
Conversely, if a $d\times n$ ETF exists in a finite orthogonal geometry of characteristic $p > 2n -5$, then a real $d\times n$ ETF also exists (Proposition~\ref{prop: no wrap implies real}).
In this sense, ETFs in finite orthogonal geometries may be seen as approximations of real ETFs, and there is only a narrow window in which existence may differ between the two settings.
Nevertheless, marked deviations occur within that window, and our results imply the existence of a large quantity of ETFs having sizes that are not known to exist over either the real or complex numbers (Example~\ref{ex: green SRGs}).

The paper is laid out as follows.
The next section provides background on SRGs, orthogonal geometry, and frames and equiangular lines over finite fields.
In Section~\ref{sec: fields}, we examine interactions between real ETFs and those in finite orthogonal geometries, and provide additional necessary conditions for the existence of the latter.
Section~\ref{sec: SRG} develops the crucial relationship between ETFs in finite orthogonal geometries and modular SRGs.
In Section~\ref{sec: ex} we give consequences of that theory and find many new ETF sizes over finite fields, including those that attain Gerzon's bound in infinitely many dimensions.
Finally, in Section~\ref{sec: 5x15} we give a computer-assisted proof that Gerzon's bound is never attained in a 5-dimensional finite orthogonal geometry of odd characteristic.

\section{Preliminaries}
\label{sec: prelim}

\subsection{Review of strongly regular graphs}
The reader may consult~\cite{BVM} for background on strongly regular graphs.
In this paper, all graphs are assumed to be simple and undirected.
If $\Gamma$ is a graph on the $v$-element vertex set $V$, then its $\{0,1\}$-adjacency matrix is defined as the $V \times V$ matrix $A$ with $A_{ij} = 1$ if vertices $i$ and $j$ are adjacent, and $A_{ij} = 0$ otherwise, whereas its \textbf{Seidel adjacency matrix} is the $V \times V$ matrix $\Sigma$ with entries
\[
\Sigma_{ij} = \begin{cases}
-1, & \text{if $i$ and $j$ are adjacent}; \\
0, & \text{if } i = j; \\
1, & \text{otherwise.}
\end{cases}
\]
We denote $J_v$ or simply $J$ for the $v \times v$ matrix of all ones, and $\mathbf{1}_v$ or $\mathbf{1}$ for the column vector of all ones.
Then the two adjacency matrices are related by
\begin{equation}
\label{eq: Seidel from 01}
\Sigma = J -2A - I.
\end{equation}

By definition, a nontrivial \textbf{strongly regular graph} (SRG) $\Gamma$ is neither complete nor edgeless, and there are parameters $v,k,\lambda,\mu$ such that its $\{0,1\}$-adjacency matrix $A$ has size $v \times v$ and satisfies 
\begin{equation}
\label{eq: SRG A}
AJ = kJ, \qquad A^2 = \mu J + (\lambda - \mu)A + (k-\mu) I.
\end{equation}
We abbreviate this by saying $\Gamma$ is a $(v,k,\lambda,\mu)$-SRG.
It has exactly two eigenvalues $r > s$ with eigenvectors outside the span of $\mathbf{1}$, namely
\[
r = \frac{1}{2}\left[ \lambda - \mu + \sqrt{ (\lambda - \mu)^2 + 4(k - \mu) } \right],
\quad
s = \frac{1}{2}\left[ \lambda - \mu - \sqrt{ (\lambda - \mu)^2 + 4(k - \mu) } \right].
\]
Each satisfies the equation 
\begin{equation}
\label{eq: eig quad}
x^2 = (\lambda - \mu)x + (k-\mu) \qquad \text{for }x \in \{r,s\},
\end{equation}
as well as the relations
\begin{equation}
\label{eq: SRG eig rels}
r+s = \lambda - \mu, \qquad rs = \mu - k, \qquad (k-r)(k-s) = \mu v.
\end{equation}
The orthogonal projections
\[
E_r = \tfrac{1}{r-s}( A - sI - \tfrac{k-s}{v} J ),
\qquad
E_s = \tfrac{1}{s-r}( A - rI - \tfrac{k-r}{v} J )
\]
together with $\tfrac{1}{v}J$ form a basis of mutually orthogonal idempotents for the algebra spanned by $A,I,J$.
Here $I = \tfrac{1}{v}J + E_r + E_s$ and $A = k(\tfrac{1}{v}J) + r E_r + s E_s$.
We denote $f = \operatorname{rank} E_r = (r-s)^{-1}(s-k-vs)$ and $g = \operatorname{rank} E_s = (s-r)^{-1}(r - k - vr )$.
If $f \neq g$ then $r$ and $s$ are integers, and if $f = g$ then $(v,k,\lambda,\mu) = (4\mu+1,2\mu,\mu-1,\mu)$.
The complementary graph $\overline{\Gamma}$ is also an SRG, with parameters $v' = v$, $k' = v-k-1$, $\lambda' = v-2k+\mu-2$, $\mu' = v-2k+\lambda$ and $r' = -s-1$, $s' = -r-1$, $f' = g$, $g' = f$.

We denote the eigenvalues of the Seidel adjacency matrix $\Sigma = J - 2A - I$ of $\Gamma$ by
\begin{equation}
\label{eq: SRG sig eigs}
\theta_k = v - 2k - 1, \qquad \theta_r = -2r - 1, \qquad \theta_s = -2s - 1.
\end{equation}
Here $\Sigma = \theta_k(\tfrac{1}{v} J) + \theta_r E_r + \theta_s E_s$, and in particular $\Sigma \mathbf{1} = \theta_k \mathbf{1}$.
From~\eqref{eq: SRG eig rels} we obtain
\begin{equation}
\label{eq: SRG eig rels 2}
\tfrac{1}{v}(\theta_k - \theta_r)(\theta_k - \theta_s) = v - 4k + 2\lambda + 2\mu,
\end{equation}
and the spectral decomposition of $\Sigma$ implies
\begin{equation}
\label{eq: SRG S1}
(\Sigma - \theta_r I)(\Sigma - \theta_s I) 
= (\theta_k - \theta_r)(\theta_k - \theta_s)(\tfrac{1}{v}J)
= (v - 4k + 2\lambda + 2\mu) J.
\end{equation}

\subsection{Review of orthogonal geometry}
The reader may consult~\cite{Gr02} for background on orthogonal geometry.

\begin{definition}
\label{def: orth geom}
Given an odd prime power $q$, a \textbf{quadratic space} over $\mathbb{F}_q$ is a finite-dimensional vector space $W$ equipped with a form $\langle \cdot, \cdot \rangle \colon W \times W \to \mathbb{F}_q$ that satisfies the following:
	\begin{itemize}
	\item[(O1)]
	for every $u \in W$, the induced mapping $\langle u, \cdot \rangle \colon W \to \mathbb{F}_q$ is linear,
	\smallskip
	
	\item[(O2)]
	$\langle u, w \rangle = \langle w, u \rangle$ for every $u,w \in W$,
	\smallskip
	
	\item[(O3)]
	if $u \in W$ satisfies $\langle u, w \rangle = 0$ for every $w \in W$, then $u = 0$.
	\end{itemize}
In other words, $\langle \cdot, \cdot \rangle$ is a nondegenerate symmetric bilinear form on $W$.
We also say that $\langle \cdot, \cdot \rangle$ induces an \textbf{orthogonal geometry} on $W$.
\end{definition}

In Definition~\ref{def: orth geom}, if $\{ e_j \}_{j \in [d]}$ is a basis for $W$, then the form $\langle \cdot, \cdot \rangle$ above is completely determined by its \textbf{Gram matrix} $M = \begin{bmatrix} \langle e_i, e_j \rangle \end{bmatrix}_{i,j \in [d]}$.
It is symmetric and invertible, and conversely any symmetric invertible $d\times d$ matrix over $\mathbb{F}_q$ is the Gram matrix of an orthogonal geometry on $W$.
The determinant of the Gram matrix $M$ lies in the multiplicative group $\mathbb{F}_q^\times$ of nonzero elements in $\mathbb{F}_q$, and if we denote
\[ \mathbb{F}_q^{\times 2} = \{ \alpha^2 : \alpha \in \mathbb{F}_q^\times \} \leq \mathbb{F}_q^\times \]
for the subgroup of nonzero quadratic residues, then the quadratic space $W$ is said to have \textbf{discriminant}
\[
\operatorname{discr}(W) = (\det M)\, \mathbb{F}_q^{\times 2} \in \mathbb{F}_q^\times / \mathbb{F}_q^{\times 2}.
\]
The latter does not depend on the choice of basis for $W$.

\begin{definition}
For an odd prime power $q$, the \textbf{real model} on $\mathbb{F}_q^d$ is given by the nondegenerate symmetric bilinear form $(x,y) = x^\top y$.
\end{definition}

Up to isometric isomorphism, a given vector space $W$ has exactly two types of orthogonal geometry: one for which the discriminant is trivial (square-type), and another for which it is not.
For example, the real model on the vector space $W = \mathbb{F}_q^d$ has Gram matrix $M = I$ with $\det M = 1$, so it is an orthogonal geometry with square-type discriminant.
As an example of the other type of orthogonal geometry on $\mathbb{F}_q^d$, let $\zeta \in \mathbb{F}_q^\times$ be a primitive element and create $\langle \cdot, \cdot \rangle$ from the Gram matrix $M = \operatorname{diag}(1,\dotsc,1,\zeta)$.
Then $\det M = \zeta \notin \mathbb{F}_q^{\times 2}$, so that the geometry induced by $\langle \cdot, \cdot \rangle$ on $\mathbb{F}_q^d$ has nontrivial discriminant.

If $W$ is a quadratic space, then a subspace $U \leq W$ is called \textbf{totally isotropic} if $\langle u, w \rangle = 0$ for every $u,w \in U$.
In that case $\dim U \leq \tfrac{1}{2} \dim W$.

\subsection{Review of frames and equiangular lines}
The basic theory of frames and equiangular lines over finite fields is developed in the companion paper~\cite{FFF1}.
We summarize the relevant portions here.

Throughout the paper, we abuse notation by identifying a sequence $\Phi = \{ \varphi_j \}_{j\in [n]}$ of vectors in a quadratic space $W$ with its \textbf{synthesis operator} $\Phi \colon \mathbb{F}_q^n \to W$ given by
\[
\Phi \{ x_i \}_{i\in [n]} = \sum_{i \in [n]} x_i \varphi_i.
\]
Its \textbf{analysis operator} $\Phi^\dagger \colon W \to \mathbb{F}_q^n$ is given by $\Phi^\dagger w = \begin{bmatrix} \langle \varphi_i, w \rangle \end{bmatrix}_{i\in [n]}$, and its \textbf{Gram matrix} is the matrix for $\Phi^\dagger \Phi$ in the standard basis, namely $G = \begin{bmatrix} \langle \varphi_i, \varphi_j \rangle \end{bmatrix} \in \mathbb{F}_q^{n\times n}$.

\begin{definition}
Let $W$ be a $d$-dimensional quadratic space over $\mathbb{F}_q$, where $q$ is odd.
A sequence $\Phi = \{ \varphi_j \}_{j\in [n]}$ in $W$ is called a \textbf{frame} of size $d\times n$ if it spans $W$.
A \textbf{tight frame} is a frame $\Phi$ for which there exists a \textbf{frame constant} $c \in \mathbb{F}_q$ such that $\Phi \Phi^\dagger = c I$.
We also call $\Phi$ a \textbf{$c$-tight frame} in that case.
If $\Phi = \{ \varphi_j \}_{j\in [n]}$ is a $c$-tight frame and there exists $a \in \mathbb{F}_q$ such that $\langle \varphi_j, \varphi_j \rangle = a$ for every $j \in [n]$, then we call $\Phi$ an \textbf{equal norm tight frame}, or an \textbf{$(a,c)$-NTF}.
\end{definition}

Unlike the real setting, it may happen that the frame constant equals zero.
We emphasize that the vectors of a $0$-tight frame span the ambient space by definition.
If $\Phi$ is an $(a,c)$-NTF of size $d\times n$, then the equal traces of $\Phi^\dagger \Phi$ and $\Phi \Phi^\dagger$ give
\begin{equation}
\label{eq: nadc}
na = dc.
\end{equation}

\begin{proposition}[\cite{FFF1}]
\label{prop: iso frames}
If $\Phi$ is a $d \times n$ frame, then it is a $0$-tight frame if and only if $\operatorname{im} \Phi^\dagger$ is a totally isotropic subspace of $\mathbb{F}_q^n$ in the real model.
In particular, a $0$-tight frame of size $d \times n$ exists only if $n \geq 2d$.
\end{proposition}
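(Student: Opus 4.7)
The plan is to establish the equivalence by identifying $\ker \Phi$ with the orthogonal complement of $\operatorname{im} \Phi^\dagger$ in the real model on $\mathbb{F}_q^n$, and then re-express the condition $\Phi\Phi^\dagger = 0$ as a containment.

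First I would record the adjoint relation: for every $x \in \mathbb{F}_q^n$ and $w \in W$,
\[
\langle \Phi x, w \rangle = \sum_{i \in [n]} x_i \langle \varphi_i, w \rangle = x^\top \Phi^\dagger w = (x, \Phi^\dagger w),
\]
where $(\cdot,\cdot)$ denotes the real model on $\mathbb{F}_q^n$. Using nondegeneracy of $\langle \cdot, \cdot \rangle$ on $W$, this yields
\[
\ker \Phi = \{x \in \mathbb{F}_q^n : \langle \Phi x, w \rangle = 0 \text{ for all } w \in W\} = (\operatorname{im} \Phi^\dagger)^\perp,
\]
where the orthogonal complement is taken in the real model. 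The condition $\Phi \Phi^\dagger = 0$ is equivalent to $\operatorname{im} \Phi^\dagger \subseteq \ker \Phi$, so combining with the identity above gives the equivalence $\Phi \Phi^\dagger = 0 \iff \operatorname{im} \Phi^\dagger \subseteq (\operatorname{im} \Phi^\dagger)^\perp$, which is precisely the assertion that $\operatorname{im} \Phi^\dagger$ is totally isotropic in the real model.

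For the dimensional consequence, I would observe that since $\Phi$ spans $W$ (being a frame), $\Phi^\dagger$ is injective: if $\Phi^\dagger w = 0$, then $\langle \Phi x, w \rangle = (x, \Phi^\dagger w) = 0$ for every $x \in \mathbb{F}_q^n$, forcing $\langle v, w \rangle = 0$ for every $v \in W$, hence $w = 0$ by nondegeneracy. Therefore $\dim \operatorname{im} \Phi^\dagger = d$. Since totally isotropic subspaces of the (nondegenerate) real model on $\mathbb{F}_q^n$ have dimension at most $n/2$, we conclude $d \leq n/2$, i.e., $n \geq 2d$.

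I do not anticipate a real obstacle here: the argument is essentially formal linear algebra once one is careful to distinguish the form $\langle \cdot, \cdot \rangle$ on $W$ from the real model form $(\cdot, \cdot)$ on $\mathbb{F}_q^n$, and to invoke nondegeneracy of each at the correct step. The only mild subtlety is verifying that $\Phi^\dagger$ is injective, which is where the frame hypothesis (spanning) and the nondegeneracy of $W$ both enter.
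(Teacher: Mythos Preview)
Your argument is correct. Note, however, that the paper does not actually prove this proposition: it is quoted from the companion paper~\cite{FFF1} and stated here without proof, so there is no in-paper argument to compare against. That said, the approach you give---identifying $\ker\Phi$ with $(\operatorname{im}\Phi^\dagger)^\perp$ via the adjoint relation $\langle \Phi x, w\rangle = (x,\Phi^\dagger w)$, then reading $\Phi\Phi^\dagger=0$ as $\operatorname{im}\Phi^\dagger\subseteq(\operatorname{im}\Phi^\dagger)^\perp$, and finally using injectivity of $\Phi^\dagger$ (from the spanning hypothesis and nondegeneracy of $W$) together with the $\tfrac{n}{2}$ bound on totally isotropic subspaces---is the natural and essentially canonical proof of this fact.
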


As in the real setting, it is possible to infer the existence of a frame from a corresponding Gram matrix.
In addition, the Gram matrix uniquely determines the discriminant of the ambient orthogonal geometry, which may be detected through the following notion.

\begin{definition}
\label{def: basic submatrix}
Given an $n \times n$ matrix $A = \left[ \begin{array}{ccc} a_1 & \cdots & a_n \end{array} \right] = \begin{bmatrix} A_{ij} \end{bmatrix}_{i,j \in [n]}$, choose any indexing set $J \subseteq [n]$ such that $\{ a_j \}_{j\in J}$ forms a basis for $\operatorname{im} A$.
Then the principal submatrix $A_b := \begin{bmatrix} A_{ij} \end{bmatrix}_{i,j \in J}$ is called a \textbf{basic submatrix} of $A$.
\end{definition}

In the following characterization, notice the absence of any condition akin to positive semidefiniteness.

\begin{proposition}[\cite{FFF1}]
\label{prop: factor Gram}
Let $G \in \mathbb{F}_q^{n\times n}$ ($q$ odd) be a square matrix, and let $G_b$ be a basic submatrix (Definition~\ref{def: basic submatrix}).
Given a quadratic space $W$, $G$ is the Gram matrix of a frame for $W$ if and only if all of the following are satisfied:
	\begin{itemize}
	\item[(i)]
	$G^\top = G$,
	\item[(ii)]
	$\operatorname{rank} G = \dim W$,
	\item[(iii)]
	$(\det G_b) \mathbb{F}_q^{\times 2} = \operatorname{discr}(W)$.
	\end{itemize}
In particular, $G$ is symmetric if and only if it is the Gram matrix of a frame for an orthogonal geometry with discriminant $(\det G_b) \mathbb{F}_q^{\times 2}$ on $\mathbb{F}_q^{\operatorname{rank} G}$.
\end{proposition}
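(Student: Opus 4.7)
The plan is to prove both directions separately. For necessity, I would assume $G = \Phi^\dagger \Phi$ for some frame $\Phi = \{\varphi_j\}_{j\in[n]}$ of $W$. Then (i) is immediate from (O2). For (ii), I would observe that since $\Phi$ spans $W$ and the form is nondegenerate (O3), $\Phi^\dagger$ has trivial kernel $(\operatorname{im}\Phi)^\perp = 0$, hence is injective, so $\operatorname{rank} G = \operatorname{rank}\Phi^\dagger\Phi = \operatorname{rank}\Phi = \dim W$. For (iii), I would exploit that because $\Phi^\dagger$ is injective, $\{\varphi_j\}_{j \in J}$ is a basis for $W$ if and only if $\{\Phi^\dagger \varphi_j\}_{j \in J}$ (the $J$-columns of $G$) is a basis for $\operatorname{im} G = \Phi^\dagger(W)$. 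Thus any choice of basic submatrix identifies a basis of $W$ whose Gram matrix is exactly $G_b$, yielding $\operatorname{discr}(W) = (\det G_b)\mathbb{F}_q^{\times 2}$.

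For sufficiency, the plan has three steps: (A) show $G_b$ is invertible; (B) realize $G_b$ as the Gram matrix of a basis of $W$; (C) extend to a full frame. For (A), I would let $d = \operatorname{rank} G$ and let $E$ be the $n \times d$ matrix with columns $\{e_j\}_{j\in J}$. Then $GE$ has rank $d$ by choice of $J$, and since every column of $G$ lies in the span of the $J$-columns, there exists $B \in \mathbb{F}_q^{d \times n}$ with $G = GEB$. Left-multiplying by $E^\top$ gives $E^\top G = G_b B$; transposing and using $G^\top = G$ together with $G_b^\top = G_b$ yields $GE = B^\top G_b$. Since $GE$ has rank $d$, so does $G_b$, which is therefore invertible. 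For (B), I would appeal to the classification of orthogonal geometries on $\mathbb{F}_q^d$ (determined up to isometry by dimension and discriminant): the symmetric invertible matrix $G_b$ is the Gram matrix of some geometry of discriminant $(\det G_b)\mathbb{F}_q^{\times 2} = \operatorname{discr}(W)$, hence isometric to $W$, and transferring the standard basis through an isometry gives $\{\varphi_j\}_{j\in J}$ in $W$ with Gram matrix $G_b$.

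For (C), for each $j \notin J$ I would write the $j$-th column of $G$ as $g_j = \sum_{k \in J} c_{jk} g_k$ and set $\varphi_j = \sum_{k \in J} c_{jk} \varphi_k$. A bilinear expansion then confirms $\langle \varphi_i, \varphi_j \rangle = G_{ij}$ in all four cases depending on whether $i, j$ lie in $J$, with the case $i, j \notin J$ reducing via symmetry of $G$ to the identity $\sum_{k,l \in J} c_{ik} c_{jl} G_{kl} = \sum_{l \in J} c_{jl} G_{il} = G_{ij}$. Since $\{\varphi_j\}_{j \in J}$ already spans $W$, the whole sequence is a frame. I expect the main obstacle to be step (A): the symmetry of $G$ is used essentially to force the principal submatrix on the basic index set to be nonsingular, and this is precisely what makes the discriminant in (iii) independent of the choice of $J$ in the first place.
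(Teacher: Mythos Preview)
Your argument is correct in both directions. The necessity part is clean; in the sufficiency part, step~(A) is the key observation (and you are right that symmetry of $G$ is what makes the \emph{principal} submatrix on $J$ invertible rather than merely some $d\times d$ minor), and steps~(B)--(C) go through exactly as you describe. One minor clarification in~(C): when verifying the case $i,j\notin J$, the first equality $\sum_{k,l\in J} c_{ik}c_{jl}G_{kl} = \sum_{l\in J} c_{jl}G_{il}$ already uses $G_{kl}=G_{lk}$ once (since the column relation gives $\sum_k c_{ik}G_{lk}=G_{li}$, not $\sum_k c_{ik}G_{kl}$ directly), so symmetry enters twice there rather than once. This does not affect the validity.

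As for comparison with the paper: the paper does not supply its own proof of this proposition. It is quoted from the companion paper~\cite{FFF1} and stated without argument, so there is nothing in the present paper to compare your approach against. Your write-up would serve as a self-contained proof.
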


\begin{proposition}[\cite{FFF1}]
\label{prop: tight Gram}
Suppose $G \in \mathbb{F}_q^{n\times n}$ is the Gram matrix of a frame $\Phi$.
Then, $G^2 = cG$ if and only if $\Phi$ is a $c$-tight frame.
\end{proposition}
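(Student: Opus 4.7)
The plan is to prove the easy direction by direct computation and the converse by using the frame hypothesis to cancel $\Phi^\dagger$ and $\Phi$ on the left and right. The forward implication is immediate: if $\Phi \Phi^\dagger = c I_W$ on $W$, then
\[
G^2 \;=\; \Phi^\dagger \Phi \Phi^\dagger \Phi \;=\; \Phi^\dagger (c I_W) \Phi \;=\; c\, \Phi^\dagger \Phi \;=\; cG.
\]

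For the converse, the key observation is that a frame $\Phi$ gives both a surjection and an injection at the level of the synthesis/analysis operators. By definition of a frame, $\Phi \colon \mathbb{F}_q^n \to W$ is surjective because its image is $\operatorname{span}\{\varphi_i\} = W$. The analysis operator $\Phi^\dagger \colon W \to \mathbb{F}_q^n$ is then injective: if $w \in \ker \Phi^\dagger$, then $\langle \varphi_i, w \rangle = 0$ for every $i$, and since any $u \in W$ can be written as $u = \sum_i x_i \varphi_i$, this forces $\langle u, w \rangle = 0$ for all $u \in W$, whence $w = 0$ by the nondegeneracy axiom (O3).

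With these two facts, the hypothesis $G^2 = cG$ rearranges to $\Phi^\dagger (\Phi \Phi^\dagger - c I_W) \Phi = 0$. Picking any right inverse of $\Phi$ (which exists by surjectivity) and composing on the right yields $\Phi^\dagger (\Phi \Phi^\dagger - c I_W) = 0$, and then injectivity of $\Phi^\dagger$ implies $\Phi \Phi^\dagger - c I_W = 0$, so $\Phi$ is a $c$-tight frame. There is no real obstacle here, as the argument is a direct transcription of the familiar real-field proof; the only subtle point is verifying the injectivity of $\Phi^\dagger$, which relies essentially on (O3). In particular the argument goes through unchanged when $c = 0$, which matters in this setting since zero frame constants are allowed.
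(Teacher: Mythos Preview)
Your proof is correct. Note, however, that the paper does not actually prove this proposition: it is quoted from the companion paper~\cite{FFF1} in the preliminaries section, so there is no in-paper argument to compare against. Your argument is the standard one---the forward direction is immediate, and for the converse you correctly use that $\Phi$ is surjective (frame hypothesis) and $\Phi^\dagger$ is injective (nondegeneracy, axiom (O3)) to cancel on both sides of $\Phi^\dagger(\Phi\Phi^\dagger - cI_W)\Phi = 0$. The only point worth flagging is the injectivity of $\Phi^\dagger$, which you handled properly; everything else is routine linear algebra, and the argument works uniformly for $c=0$ as you observe.
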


\begin{definition}
\label{def: ETF}
Let $\Phi = \{ \varphi_j \}_{j \in [n]}$ be a sequence of vectors in a quadratic space $W$ over $\mathbb{F}_q$.
For $a,b \in \mathbb{F}_q$, $\Phi$ is called an \textbf{$(a,b)$-equiangular system} if:
	\begin{itemize}
	\item[(i)]
	$\langle \varphi_j, \varphi_j \rangle = a$ for every $j \in [n]$,
	\item[(ii)]
	$\langle \varphi_i, \varphi_j \rangle^2 = b$ for every $i \neq j$ in $[n]$.
	\end{itemize}
If this occurs and $\varphi_j \neq 0$ for every $j$, then we call $\mathscr{L} = \{ \operatorname{span} \varphi_j \}_{j\in [n]}$ a sequence of \textbf{equiangular lines}.
An $(a,b)$-equiangular system that is also a $c$-tight frame is known as an \textbf{equiangular tight frame}, or \textbf{$(a,b,c)$-ETF}.
\end{definition}

We have the following version of Gerzon's bound~\cite{LS73}.

\begin{proposition}[Gerzon's bound, \cite{FFF1}]
\label{prop: Gerzon}
Suppose $q$ is an odd prime power and $\Phi$ is an $(a,b)$-equiangular system of $n$ vectors in a $d$-dimensional quadratic space over $\mathbb{F}_q$, such that $a^2 \neq b$.
Then $n \leq \binom{d+1}{2}$,  and equality holds only if $\Phi$ is an ETF.
\end{proposition}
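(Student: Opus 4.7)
The plan is to view each $\varphi_j$ as the rank-one symmetric operator $T_j := \varphi_j \varphi_j^\dagger$ on $W$, and to show that $\{T_j\}_{j \in [n]}$ is linearly independent inside the $\binom{d+1}{2}$-dimensional space of symmetric operators on $W$; the bound $n \leq \binom{d+1}{2}$ will then follow by dimension counting. Under the trace pairing on operators, the Gram matrix of $\{T_j\}$ has entries $\operatorname{tr}(T_i T_j) = \langle \varphi_i, \varphi_j \rangle^2$, which by hypothesis collect into
\[
M = (a^2 - b) I + b J \in \mathbb{F}_q^{n \times n}, \qquad \det M = (a^2 - b)^{n-1}\bigl(a^2 + (n-1)b\bigr).
\]
In the main case where $a^2 + (n-1)b \neq 0$, the hypothesis $a^2 \neq b$ renders $M$ nonsingular; since any relation $\sum_j c_j T_j = 0$ pairs against each $T_i$ to give $Mc = 0$, we conclude $c = 0$ and the $T_j$ are independent.

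The main obstacle is the degenerate case $a^2 + (n-1)b = 0$, in which $M$ drops one rank and the direct Gram-matrix argument loses a dimension. I would bridge the gap through the identity $M = G \odot G$, the entrywise Hadamard square of the frame Gram matrix $G := \Phi^\dagger \Phi$, which has rank $d$. In odd characteristic, diagonal congruence produces a decomposition $G = \sum_{k=1}^{d} \sigma_k p_k p_k^\dagger$ with $\sigma_k \in \mathbb{F}_q^\times$ and $p_k \in \mathbb{F}_q^n$, so $G \odot G$ lies in the span of the symmetric rank-one terms $(p_k \odot p_l)(p_k \odot p_l)^\dagger$, a subspace of dimension at most $\binom{d+1}{2}$. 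When $a \neq 0$, the diagonal identity $\sum_k \sigma_k (p_k \odot p_k) = a \mathbf{1}$ places $\mathbf{1}$ inside $\operatorname{span}\{p_k \odot p_l\}$, and the degenerate relation combined with $a^2 \neq b$ forces $p \nmid n$, so $\mathbf{1} \notin \operatorname{im}(M) = \mathbf{1}^\perp$ and the rank bound tightens to $n \leq \binom{d+1}{2}$. When $a = 0$, the vanishing diagonal of $G$ instead yields a nontrivial relation $\sum_k \sigma_k (p_k \odot p_k) = 0$, trimming the ambient span to dimension at most $\binom{d+1}{2} - 1$; together with $\operatorname{rank}(M) = n - 1$, this again delivers $n \leq \binom{d+1}{2}$.

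Finally, for the equality case $n = \binom{d+1}{2}$: the $T_j$ must form a basis of the symmetric operators on $W$, so $I = \sum_j \alpha_j T_j$ for a unique $\alpha \in \mathbb{F}_q^n$. Pairing this identity against each $T_i$ yields $M \alpha = a \mathbf{1}$, and the spectral structure of $M$ forces $\alpha$ to be a nonzero scalar multiple of $\mathbf{1}$ — either from nonsingularity of $M$ in the main case, or from $\alpha \in \ker M = \operatorname{span}(\mathbf{1})$ in the degenerate case. Hence $\Phi \Phi^\dagger = \sum_j T_j$ is a nonzero scalar multiple of $I$, so $\Phi$ is a tight frame, completing the ETF conclusion.
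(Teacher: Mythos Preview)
The paper does not actually prove this proposition; it is quoted from the companion paper~\cite{FFF1}, so there is no in-paper argument to compare against. Your approach --- linear independence of the rank-one symmetric operators $T_j=\varphi_j\varphi_j^\dagger$ under the trace pairing, with a Hadamard-square refinement $M=G\odot G$ to rescue the degenerate case $a^2+(n-1)b=0$ --- is the natural one, and your derivation of the bound $n\le\binom{d+1}{2}$ is sound in both the main and degenerate cases.

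The equality clause, however, has a real gap. You assert that when $n=\binom{d+1}{2}$ the $T_j$ ``must form a basis'' of the symmetric operators, but in the degenerate case you never established their linear independence; your bound there went through $\operatorname{im}(M)\subseteq\operatorname{span}\{p_k\odot p_l\}$, not through independence of the $T_j$. In fact independence can fail. The kernel of $c\mapsto\sum_j c_jT_j$ is always contained in $\ker M=\operatorname{span}(\mathbf{1})$, so either the $T_j$ are independent or $\sum_j T_j=0$. In the latter branch $\Phi\Phi^\dagger=0$, and $\Phi$ is a $0$-tight frame --- a legitimate ETF here, but one your ``nonzero scalar multiple of $I$'' conclusion wrongly excludes. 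Indeed, when $a\neq0$ in the degenerate case this branch is \emph{forced}: if the $T_j$ were a basis, the equation $M\alpha=a\mathbf{1}$ would require $a\mathbf{1}\in\operatorname{im}(M)=\mathbf{1}^\perp$, impossible since you showed $p\nmid n$. A smaller issue: in the nondegenerate case with $a=0$, nonsingularity of $M$ gives $\alpha=0$, not a nonzero multiple of $\mathbf{1}$; this yields $I=0$, a contradiction showing equality cannot occur there, but you should say so rather than absorb it into a claim that is literally false. The repairs are routine once you make the case split ($T_j$ independent vs.\ $\sum_j T_j=0$; $a=0$ vs.\ $a\neq0$) explicit.
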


The parameters of an $(a,b,c)$-ETF are related in a manner similar to that given by the Welch bound in the real and complex settings~\cite{W74}.

\begin{proposition}[\cite{FFF1}]
\label{prop: Welch}
If $\Phi$ is an $(a,b,c)$-ETF of size $d\times n$, then 
\[ a(c-a) = (n-1)b. \]
\end{proposition}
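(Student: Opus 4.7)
The plan is to extract the identity $a(c-a) = (n-1)b$ by computing a single diagonal entry of $G^2$ in two different ways, where $G = \Phi^\dagger \Phi$ is the Gram matrix of $\Phi$.

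First, I would invoke Proposition~\ref{prop: tight Gram} to translate the tightness hypothesis $\Phi \Phi^\dagger = cI$ into the matrix identity $G^2 = cG$ on the Gram side. (Strictly speaking, one must first note that $\Phi$ is a frame, which is part of the ETF definition, so Proposition~\ref{prop: tight Gram} applies.) From this, the $(i,i)$ entry of $G^2$ equals $cG_{ii} = ca$, using the equiangularity condition $\langle \varphi_i, \varphi_i \rangle = a$.

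Next, I would compute the same entry directly from the definition of matrix multiplication together with the symmetry $G = G^\top$. For each fixed $i \in [n]$,
\[
(G^2)_{ii} \;=\; \sum_{j \in [n]} G_{ij} G_{ji} \;=\; \sum_{j \in [n]} G_{ij}^2 \;=\; G_{ii}^2 + \sum_{j \neq i} G_{ij}^2 \;=\; a^2 + (n-1) b,
\]
where the last equality uses $\langle \varphi_i, \varphi_i \rangle^2 = a^2$ and the off-diagonal condition $\langle \varphi_i, \varphi_j \rangle^2 = b$ from the definition of an $(a,b)$-equiangular system. Equating this with $ca$ and rearranging gives $a(c - a) = (n-1)b$, as desired.

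There is essentially no obstacle here: the identity is a trace-style calculation that carries over verbatim from the real setting, because it only requires the symmetric bilinear structure of $\langle \cdot, \cdot \rangle$ and the characterizations of Gram matrix and tight frame already established. No positivity or archimedean input is used, so the argument is valid over any $\mathbb{F}_q$ with $q$ odd.
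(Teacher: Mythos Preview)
Your argument is correct. Note, however, that the paper does not actually give its own proof of this proposition: it is quoted from the companion paper~\cite{FFF1} and stated without proof here. Your computation of the $(i,i)$ entry of $G^2 = cG$ is the standard one-line derivation of this Welch-type identity, and it is almost certainly what appears in~\cite{FFF1}; in any case there is no gap, and the argument works verbatim over $\mathbb{F}_q$ for the reasons you indicate.
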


Finally, the sizes of ETFs with nonzero frame constant come in Naimark complementary pairs.
We emphasize that the two geometries mentioned below may have different discriminants.

\begin{proposition}[Naimark complements, \cite{FFF1}]
\label{prop: Naimark}
If $c \neq 0$ and there is an $(a,b,c)$-ETF in an orthogonal geometry on $\mathbb{F}_q^d$ ($q$ odd), then there is a $(c-a,b,c)$-ETF in an orthogonal geometry on $\mathbb{F}_q^{n-d}$.
\end{proposition}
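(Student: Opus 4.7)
The plan is to mimic the classical Naimark complement construction, using the Gram-matrix characterizations in Propositions~\ref{prop: factor Gram} and~\ref{prop: tight Gram} to sidestep the need for positive semidefiniteness. Let $\Phi = \{\varphi_j\}_{j \in [n]}$ be the given $(a,b,c)$-ETF of size $d \times n$, with Gram matrix $G \in \mathbb{F}_q^{n \times n}$. Propositions~\ref{prop: factor Gram} and~\ref{prop: tight Gram} tell us that $G^\top = G$, $\operatorname{rank} G = d$, $G^2 = cG$, $G_{jj} = a$, and $G_{ij}^2 = b$ for $i \neq j$. The candidate Gram matrix for the complement is $G' := cI - G$.

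Next I would verify the hypotheses needed to apply Propositions~\ref{prop: factor Gram} and~\ref{prop: tight Gram} to $G'$. Symmetry is immediate. A direct expansion gives
\[
(G')^2 = c^2 I - 2cG + G^2 = c^2 I - cG = cG',
\]
so $G'$ satisfies the tightness identity with the same frame constant $c$. The diagonal entries of $G'$ are $c-a$, and the off-diagonal entries are $-G_{ij}$, whose squares coincide with $G_{ij}^2 = b$. It remains to compute $\operatorname{rank} G' = n - d$.

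For the rank, the key observation is that $GG' = cG - G^2 = 0$ and $G + G' = cI$. The first equation shows $\operatorname{im} G' \subseteq \ker G$, hence $\operatorname{rank} G' \leq n - d$. Conversely, the relation $G + G' = cI$ (with $c \neq 0$) shows that $G'$ restricts to multiplication by $c$ on $\ker G$, so $\ker G \subseteq \operatorname{im} G'$ and $\operatorname{rank} G' \geq n - d$. (This is the one place where the hypothesis $c \neq 0$ is essential, as it guarantees the minimal polynomial $x(x-c)$ of $G$ has distinct roots, so that the space splits as $\ker G \oplus \operatorname{im} G$.) With rank in hand, Proposition~\ref{prop: factor Gram} produces a frame $\Psi$ in some orthogonal geometry on $\mathbb{F}_q^{n-d}$ whose Gram matrix is $G'$, Proposition~\ref{prop: tight Gram} upgrades $\Psi$ to a $c$-tight frame, and the diagonal/off-diagonal computations above identify $\Psi$ as a $(c-a,b,c)$-ETF.

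The main subtlety is not any single step but rather what we do \emph{not} need to control: since Proposition~\ref{prop: factor Gram} forces the discriminant of the ambient geometry to equal $(\det G'_b)\mathbb{F}_q^{\times 2}$ for a basic submatrix $G'_b$, the complementary geometry is prescribed by $G'$ and can genuinely differ from that of $\Phi$. The statement only claims existence in \emph{some} orthogonal geometry on $\mathbb{F}_q^{n-d}$, which is exactly what Proposition~\ref{prop: factor Gram} delivers, so no extra work on discriminants is required.
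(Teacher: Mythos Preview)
Your argument is correct. Note, however, that the paper does not supply its own proof of this proposition: it is quoted from the companion paper~\cite{FFF1}, so there is no in-paper argument to compare against. The route you take---defining $G' = cI - G$, verifying $(G')^2 = cG'$, reading off the diagonal and off-diagonal entries, and using $GG' = 0$ together with $G + G' = cI$ (and $c \neq 0$) to pin down $\operatorname{rank} G' = n - d$---is the standard Naimark complement construction adapted to the finite-field Gram-matrix framework, and it goes through cleanly via Propositions~\ref{prop: factor Gram} and~\ref{prop: tight Gram} (equivalently, Proposition~\ref{prop: orthogonal ETF from Gram matrix}). Your closing remark about discriminants is also apt and matches the paper's explicit caveat that the two geometries may differ.
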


\subsection{Initial observations}
Before beginning in earnest, we make a few basic observations about ETFs in orthogonal geometries.
First of all, the existence of an ETF may be detected from its Gram matrix, where the following is immediate from Proposition~\ref{prop: factor Gram} and Proposition~\ref{prop: tight Gram}.

\begin{proposition}
\label{prop: orthogonal ETF from Gram matrix}
Suppose $G$ is a symmetric $n\times n$ matrix with entries in $\mathbb{F}_q$, where $q$ is odd.
Given $a,b,c \in \mathbb{F}_q$, $G$ is the Gram matrix of an $(a,b,c)$-ETF in an orthogonal geometry on $\mathbb{F}_q^d$ if and only if all of the following hold:
	\begin{itemize}
	\item[(ii)]
	$G_{ii} = a$ for every $i \in [n]$,
	\item[(ii)]
	$(G_{ij})^2 = b$ for every $i \neq j$ in $[n]$,
	\item[(iii)]
	$G^2 = c G$,
	\item[(iv)]
	$\operatorname{rank} G = d$.
	\end{itemize}
\end{proposition}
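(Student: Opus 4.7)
The plan is to deduce this characterization by stringing together the three preceding results, with the equiangular conditions read off directly from the entries of $G$. Since the Gram matrix of any sequence $\Phi = \{\varphi_j\}_{j\in[n]}$ has entries $G_{ij} = \langle \varphi_i, \varphi_j\rangle$, conditions (i) and (ii) translate verbatim into the statement that $\Phi$ is an $(a,b)$-equiangular system in the sense of Definition~\ref{def: ETF}. So once we have a frame whose Gram matrix is $G$, the equiangularity part is automatic.

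Next, I would invoke the ``in particular'' clause of Proposition~\ref{prop: factor Gram}: the hypothesis that $G$ is symmetric already guarantees the existence of a frame $\Phi$ with Gram matrix $G$ for \emph{some} orthogonal geometry on $\mathbb{F}_q^{\operatorname{rank} G}$, where the discriminant is pinned down as $(\det G_b)\mathbb{F}_q^{\times 2}$ by any basic submatrix. Condition (iv) then ensures this ambient quadratic space is $d$-dimensional, as required. Conversely, any frame for a $d$-dimensional quadratic space automatically has a symmetric Gram matrix of rank $d$, giving the necessity of symmetry and of~(iv).

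Finally, given such a frame $\Phi$ for the $d$-dimensional orthogonal geometry whose existence Proposition~\ref{prop: factor Gram} supplies, Proposition~\ref{prop: tight Gram} makes condition (iii), namely $G^2 = cG$, equivalent to $\Phi$ being a $c$-tight frame. Combining this with the equiangular reformulation of (i)--(ii) and the dimension count from (iv) yields the four conditions as jointly necessary and sufficient for $G$ to arise as the Gram matrix of an $(a,b,c)$-ETF in an orthogonal geometry on $\mathbb{F}_q^d$.

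I do not anticipate any substantive obstacle: the statement is essentially the packaging together of Proposition~\ref{prop: factor Gram}, Proposition~\ref{prop: tight Gram}, and Definition~\ref{def: ETF}. The only mildly delicate point to flag is that Proposition~\ref{prop: factor Gram} in its precise form demands a pre-specified discriminant, so I would be careful to note that here we only require existence in \emph{some} orthogonal geometry on $\mathbb{F}_q^d$; the discriminant is then intrinsically determined by $G$ via a basic submatrix, and no further hypothesis on $G$ is needed.
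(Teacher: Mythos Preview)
Your proposal is correct and matches the paper's own treatment: the paper states this proposition as ``immediate from Proposition~\ref{prop: factor Gram} and Proposition~\ref{prop: tight Gram}'' without further argument, and you have simply unpacked that immediacy by reading (i)--(ii) from Definition~\ref{def: ETF}, (iii) from Proposition~\ref{prop: tight Gram}, and (iv) from Proposition~\ref{prop: factor Gram}. Your remark about the discriminant being intrinsically determined rather than prescribed is also exactly the point the paper makes in the remark immediately following the proposition.
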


\begin{remark}
Proposition~\ref{prop: orthogonal ETF from Gram matrix} fails to mention the type of orthogonal geometry on $\mathbb{F}_q^d$, even though it is uniquely determined by $G$ and can be recovered from the determinant of a basic submatrix.
This is easy to compute in practice but may be difficult to predict in general.
For this reason, we shall often be deliberately ambiguous as we have done in Proposition~\ref{prop: orthogonal ETF from Gram matrix}, referring only to ``an'' orthogonal geometry without specifying its discriminant.

In any case, the discriminant can always be made unambiguous by passing to a quadratic extension field: any symmetric $G \in \mathbb{F}_q^{n\times n}$ is the Gram matrix of a frame for $\mathbb{F}_{q^2}^{\operatorname{rank} G}$ in the real model, since the determinant of a basic submatrix lies in $\mathbb{F}_q^\times \leq \mathbb{F}_{q^2}^{\times 2}$.
In particular, if an $(a,b,c)$-ETF exists in an orthogonal geometry on $\mathbb{F}_q^d$, then an $(a,b,c)$-ETF exists in the real model on $\mathbb{F}_{q^2}^d$.
\end{remark}

\begin{remark}
We will see later that, as far as existence of $d\times n$ ETFs is concerned, one may assume the field order $q = p^l$ is either prime (when $n \neq 2d$) or prime squared (when $n = 2d$).
As in the previous remark, one may further assume the geometry is the real model by taking $q = p^2$ (when $n \neq 2d$) or $q = p^4$ (when $n = 2d$).
See Theorem~\ref{thm: orth ETF in Fp}.
\end{remark}

\begin{remark}
\label{rem: rescale Gram}
The parameters of an $(a,b,c)$-ETF may be altered by rescaling its Gram matrix.
If $G \in \mathbb{F}_q^{n\times n}$ is the Gram matrix of an $(a,b,c)$-ETF in an orthogonal geometry and $\alpha \in \mathbb{F}_q^\times$, then $\alpha G$ is the Gram matrix of an $(\alpha a, \alpha^2 b, \alpha c)$-ETF in an orthogonal geometry of the same dimension.
Notice that the discriminant changes if the dimension is odd and $\alpha \notin \mathbb{F}_q^{\times 2}$.
If $n > d$ then $b \neq 0$, since the vectors of a $(0,0)$-equiangular system span a totally isotropic subspace (and therefore cannot form a frame), while those of an $(a,0)$-equiangular system with $a\neq 0$ are linearly independent (by the usual argument).
As $b$ is a quadratic residue, we can normalize to ensure $b = 1$ whenever $n > d$.
In that case the ambiguity associated with $\alpha = -1$ remains.
(We usually do not normalize $a$ or $c$ since each is often equal to $0$.)
After such normalization, the Gram matrix takes the form $G = S + aI$, where $S$ has zeros on the diagonal and $\pm 1$ off the diagonal.
Here, $S$ is known as the \textbf{signature matrix} of the rescaled ETF.
Up to a change in sign, it is uniquely determined by the unscaled ETF.
\end{remark}

We end our preliminary discussion with some trivial ETF constructions.

\begin{example}
For any $n\geq 2$ and any odd prime power $q = p^l$, the matrices $I,J \in \mathbb{F}_q^{n\times n}$ satisfy $I^2 = I$ and $J^2 = nJ$.
The former produces a $(1,0,1)$-ETF in an orthogonal geometry of dimension $d = n$ over $\mathbb{F}_q$.
The latter produces a $(1,1,n)$-ETF of size $1 \times n$.
When $p$ fails to divide $n$, we can take a Naimark complement to obtain an $(n-1,1,n)$-ETF of size $(n-1) \times n$.
\end{example}

\section{Field conditions}
\label{sec: fields}

In this section, we provide conditions under which the signature matrix of an ETF in one field produces another ETF in a different field.
Using this technique, we demonstrate a close correspondence between real ETFs and ETFs in finite orthogonal geometries.
In the finite field setting, we show that in most cases the parameters of an $(a,b,c)$-ETF belong to the base field $\mathbb{F}_p$, and so its Gram matrix factors to produce another ETF of the same size in an orthogonal geometry over the base field.
Considering relations between the parameters, we deduce necessary conditions on $d,n,p$ such that a $d\times n$ ETF exists in a finite orthogonal geometry of characteristic~$p$.

We use the following notation when moving between fields.
Let $R \subset \mathbb{C}$ be a unital ring of algebraic integers (that is, zeroes of monic polynomials with integer coefficients), and let $\pi \colon R \to F$ be a unital ring homomorphism into a finite field $F$ of characteristic~$p$.
(For instance, $R = \mathbb{Z}$ and $F = \mathbb{F}_p$.)
Given a matrix $M$ with entries in $R$, we denote $\overline{M} = \pi(M)$ for its entrywise image over $F$ and $\operatorname{rank}_F \overline{M}$ for its rank over $F$.
In case $M$ has integer entries, we also write $\operatorname{rank}_p \overline{M} := \operatorname{rank}_{\mathbb{F}_p} \overline{M} = \operatorname{rank}_F \overline{M}$.
We routinely abuse notation by identifying integers $k \in \mathbb{Z}$ with their images $\overline{k} = \pi(k) \in \mathbb{F}_p \leq F$.
The symbols $\equiv$ and $\equiv_p$ indicate equality in $F$ and $\mathbb{F}_p$, respectively, after appropriate applications of $\pi \colon R \to F$.
These abuses should not cause any confusion in practice, and the exact meaning should be clear from context.

The following proposition will prove indispensible.
It summarizes aspects of Propositions~13.3.1, 13.3.2, and 13.3.4 of~\cite{BH12}.
In~\cite{BH12} it is assumed that $R$ is the entire ring of algebraic integers in~$\mathbb{C}$.
However, a careful reading shows that the same proofs apply in our more general setting.

\begin{proposition}[\cite{BH12}]
\label{prop: p-ranks}
Let $M$ be an $n\times n$ matrix all of whose entries and eigenvalues lie in $R$.
\begin{itemize}
	\item[(a)]
	We have $\operatorname{rank}_F \overline{M} \leq \operatorname{rank} M$, with equality if $M$ is diagonalizable and every nonzero eigenvalue $\theta$ satisfies $\overline{\theta} \neq 0$.
	\item[(b)]
	Suppose $M$ is nonzero, singular, and symmetric with integral entries and constant row sum $\theta_0$, and that the $\theta_0$-eigenspace is one-dimensional.
	Denote the minimal polynomial of $M$ by $(x-\theta_0)h(x)$, and define $\epsilon = 1$ if $\overline{h(M)} = 0$ and $\epsilon = 0$ otherwise.
	If every other nonzero eigenvalue $\theta \neq \theta_0$ of $M$ satisfies $\overline{\theta} \neq 0$, then $\operatorname{rank}_F \overline{M} = \operatorname{rank} M - \epsilon$.
\end{itemize}
\end{proposition}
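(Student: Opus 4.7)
Both parts follow the approach in Brouwer--Haemers~\cite{BH12}, the main tool being the classical identity that the sum of principal $r \times r$ minors of a square matrix equals the $r$-th elementary symmetric polynomial in its eigenvalues. For part~(a) I would first establish the upper bound by a routine minor argument: writing $r = \operatorname{rank} M$, every $(r+1) \times (r+1)$ minor of $M$ vanishes as an element of $R$ and hence reduces to $0 \in F$, so $\operatorname{rank}_F \overline{M} \leq r$. For equality under the diagonalizability hypothesis, $M$ has exactly $r$ nonzero eigenvalues counted with multiplicity, and any $r$-subset of its eigenvalues that omits a nonzero one is forced to include a zero one; consequently
\[
\sum_{|S|=r} \det(M_S) \;=\; e_r(\theta_1, \dotsc, \theta_n) \;=\; \prod_{\theta_i \neq 0} \theta_i,
\]
which is nonzero in $R$. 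By hypothesis each factor has a nonzero reduction in $F$, so the sum of principal $r \times r$ minors of $\overline{M}$ is nonzero; hence some such minor is nonzero, giving $\operatorname{rank}_F \overline{M} \geq r$.

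For part~(b) the key new ingredient is an explicit form for $h(M)$. Since $M$ is symmetric, diagonalizable over $\mathbb{C}$, and has $\mathbf{1}$ generating the one-dimensional $\theta_0$-eigenspace, the orthogonal projector onto this eigenspace is $\tfrac{1}{n} J$, and since $h$ annihilates every eigenvalue other than $\theta_0$ the spectral decomposition collapses to
\[
h(M) \;=\; h(\theta_0) \cdot \tfrac{1}{n} J \;=\; \tfrac{h(\theta_0)}{n}\, J.
\]
In particular $h(\theta_0)/n \in \mathbb{Z}$, and $\overline{h(M)} = 0$ precisely when $p \mid h(\theta_0)/n$. I would then split into cases. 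If $\theta_0 = 0$ or $\overline{\theta_0} \neq 0$, the reduction of the minor sum from part~(a) is nonzero in $F$ and gives $\operatorname{rank}_F \overline{M} = r$, while a short divisibility check shows $\overline{h(M)} \neq 0$, so $\epsilon = 0$. If instead $\theta_0 \neq 0$ but $\overline{\theta_0} = 0$, the identity $\overline{M}\, \mathbf{1} = \overline{\theta_0}\, \mathbf{1} = 0$ places $\mathbf{1} \in \ker \overline{M} \setminus \ker M$, giving $\operatorname{rank}_F \overline{M} \leq r - 1$; a matching lower bound comes from applying part~(a) to the restriction of $M$ to $\mathbf{1}^\perp$, whose nonzero eigenvalues still have nonzero reductions by hypothesis. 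In this case one also verifies $\overline{h(M)} = 0$, giving $\epsilon = 1$.

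The most delicate step is the $p$-adic bookkeeping required to match $\overline{h(M)} = 0$ exactly with the one-dimensional rank drop: one must track divisibility in the factorization $h(\theta_0) = \prod_{\theta \neq \theta_0}(\theta_0 - \theta)$ against the denominator $n$, which can interact nontrivially when $p$ divides $n$. The detailed verifications are carried out in~\cite[Propositions~13.3.1, 13.3.2, and~13.3.4]{BH12}, and I would appeal to those to close the cases where the elementary argument above does not immediately give both equalities.
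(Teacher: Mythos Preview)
The paper does not give its own proof of this proposition; it simply cites Propositions~13.3.1, 13.3.2, and~13.3.4 of~\cite{BH12} and remarks that those arguments carry over to the slightly more general ring~$R$ considered here. Your proposal is an outline of that same \cite{BH12} argument---the minor--sum identity with $e_r$ for part~(a), and the identity $h(M) = \tfrac{h(\theta_0)}{n} J$ for part~(b)---followed by an explicit deferral to \cite{BH12} for the remaining details. In that sense your treatment is at least as complete as the paper's.

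One caution about your sketch of part~(b): the case split on whether $\overline{\theta_0}$ vanishes is not the split that governs $\epsilon$, and the two are not obviously equivalent. In your Case~1b ($\theta_0 \neq 0$, $\overline{\theta_0} \neq 0$) the claim ``a short divisibility check shows $\overline{h(M)} \neq 0$'' is not immediate: writing $h(\theta_0) = \theta_0 \prod_{\mu \neq 0,\theta_0}(\theta_0 - \mu)$, a factor $\theta_0 - \mu$ can be divisible by $p$ even when $\theta_0$ and $\mu$ individually are not, and one must then control this against the denominator~$n$. Likewise in Case~2, the inequality $\operatorname{rank}_F \overline{M} \leq r-1$ together with a restriction-to-$\mathbf{1}^\perp$ argument does not by itself pin down $\epsilon = 1$. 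You flag exactly this at the end and defer to \cite{BH12}, which is precisely what the paper does, so there is no gap relative to the paper's own treatment.
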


\subsection{Interactions between real and finite fields}

\begin{proposition}
\label{prop: project real ETF}
Suppose $S \in \mathbb{Z}^{n\times n}$ is the signature matrix of a real $d \times n$ ETF with $n > d+1$.
Let $q = p^l$ be any odd prime power, but if $n=2d$ assume there exists $\delta \in \mathbb{F}_q$ with $\delta^2 \equiv n-1$.
Then there exist $a,c \in \mathbb{F}_q$ such that $\overline{S} + aI \in \mathbb{F}_q^{n\times n}$ is the Gram matrix of an $(a,1,c)$-ETF of $n$ vectors in an orthogonal geometry on $\mathbb{F}_q^{d'}$, where $d' \leq d$ and
\[
a \equiv \begin{cases}
\sqrt{ \frac{ d(n-1) }{ n - d } }, & \text{if }n \neq 2d; \\
\delta, & \text{if }n = 2d
\end{cases}
\qquad
\text{and}
\qquad
c \equiv \begin{cases}
\sqrt{ \frac{ n^2 (n-1) }{d(n-d)} }, & \text{if }n \neq 2d; \\
2\delta, & \text{if }n = 2d.
\end{cases}
\]
(The square roots are integers in the indicated cases.)
Furthermore, $d' = d$ if $c \not\equiv 0$.
\end{proposition}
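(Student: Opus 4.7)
The plan is to apply Proposition~\ref{prop: orthogonal ETF from Gram matrix} to the reduction $\overline{G} := \overline{S} + aI \in \mathbb{F}_q^{n \times n}$, where $a$ and $c$ are the scalars described in the statement. Conditions (i) and (ii) of that proposition are immediate: the signature matrix $S$ has zero diagonal and $\pm 1$ off-diagonal entries, so $\overline{G}$ has constant diagonal $a$ and off-diagonal entries that square to $1$. What remains is to establish the tightness identity $\overline{G}^2 = c\, \overline{G}$ (condition (iii)) and to compute $\operatorname{rank}_{\mathbb{F}_q} \overline{G} = d'$ (condition (iv)).

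To reduce modulo~$p$, I would identify a ring of algebraic integers $R \subset \mathbb{C}$ containing $a$ and $c$, together with a unital homomorphism $\pi \colon R \to \mathbb{F}_q$. When $n \neq 2d$, the signature matrix $S$ has exactly two eigenvalues, $\rho := c - a$ with multiplicity $d$ and $\sigma := -a$ with multiplicity $n - d$. Since $S \in \mathbb{Z}^{n \times n}$ has an integral characteristic polynomial and the two multiplicities are distinct, the Galois group of $\overline{\mathbb{Q}}/\mathbb{Q}$ must fix each eigenvalue, forcing $\rho$ and $\sigma$ to be integers. Hence $a, c \in \mathbb{Z} =: R$, and $\pi$ is the canonical reduction onto $\mathbb{F}_p \leq \mathbb{F}_q$. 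When $n = 2d$, the Welch bound (Proposition~\ref{prop: Welch}) applied with $b = 1$ yields $a^2 = n - 1$ and $c = 2a$, so I would take $R = \mathbb{Z}[a]$ and let $\pi \colon R \to \mathbb{F}_q$ be the homomorphism sending $a \mapsto \delta$, which is well-defined precisely because $\delta^2 \equiv n - 1$.

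Over $\mathbb{R}$, the Gram matrix $G = S + aI$ of the original ETF satisfies $G^2 = cG$, an identity whose terms lie in $R$. Applying $\pi$ entrywise yields $\overline{G}^2 = \overline{c}\, \overline{G}$ in $\mathbb{F}_q$, establishing tightness. For the rank, note that $G$ is real symmetric and hence diagonalizable over $\mathbb{C}$, with eigenvalues $c$ (multiplicity $d$) and $0$ (multiplicity $n - d$), both in $R$. Proposition~\ref{prop: p-ranks}(a) then gives $\operatorname{rank}_{\mathbb{F}_q} \overline{G} \leq \operatorname{rank} G = d$, with equality whenever $\overline{c} \neq 0$. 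Condition (iv) therefore holds with $d' \leq d$, and $d' = d$ when $c \not\equiv 0$. The main step that will require care is the integrality argument for $n \neq 2d$, together with the compatible choice of $\pi$ in the $n = 2d$ case; everything else is a direct reduction of real matrix identities.
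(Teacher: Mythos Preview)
Your proposal is correct and follows essentially the same approach as the paper: write $G = S + aI$, reduce the real identity $G^2 = cG$ through a ring homomorphism $\pi\colon R \to \mathbb{F}_q$ (with $R = \mathbb{Z}$ when $n \neq 2d$ and $R = \mathbb{Z}[\sqrt{n-1}]$ when $n = 2d$), and invoke Proposition~\ref{prop: p-ranks}(a) for the rank bound. The only cosmetic difference is that the paper cites \cite[Corollary~13]{STDH07} for the integrality of the eigenvalues in the $n \neq 2d$ case, whereas you supply the Galois argument directly; these amount to the same thing.
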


\begin{proof}
Let $G=S+\hat{a}I \in \mathbb{R}^{n\times n}$ be the Gram matrix of a $d \times n$ ETF with frame constant $\hat{c} \in \mathbb{R}$.
Since $G \neq 0$ satisfies $G^2 = \hat{c} G$, $S$ has eigenvalues $- \hat{a}$ and $\hat{c} - \hat{a}$, where
\[
\hat{a} = \sqrt{ \frac{ d(n-1) }{n-d} },
\qquad
\hat{c} = \tfrac{d}{n} \hat{a} = \sqrt{ \frac{ n^2(n-1)}{d(n-d)} }.
\]
(See \cite[Section~4]{STDH07}.)

First suppose $n \neq 2d$.
Corollary~13 of~\cite{STDH07} states that $S$ has integer eigenvalues, so $\hat{a},\hat{c} \in \mathbb{Z}$.
Therefore $G$ has integer entries and integer spectrum $\sigma(G) = \{ 0^{n-d}, \hat{c}^d \}$, where superscripts denote geometric multiplicities.
Its image $\overline{G} \in \mathbb{F}_q^{n\times n}$ satisfies $\overline{G}^2 = \hat{c} \overline{G}$, and, by Proposition~\ref{prop: orthogonal ETF from Gram matrix}, it is the Gram matrix of an $(\hat{a},1,\hat{c})$-ETF in an orthogonal geometry on $\mathbb{F}_q^{d'}$, where $d' = \operatorname{rank}_p \overline{G}$.
We have $d' \leq d$ by Proposition~\ref{prop: p-ranks}(a), and equality holds if $\hat{c} \not\equiv_p 0$.

Now suppose $n = 2d$.
We have $\hat{a} = \sqrt{n-1}$ and $\hat{c} = 2\sqrt{n-1}$, so the entries and eigenvalues of $G$ lie in the ring $R = \mathbb{Z}[\sqrt{n-1}]$.
Let $\pi \colon R \to \mathbb{F}_q$ be the unique unital ring homomorphism for which $\pi(\sqrt{n-1}) = \delta$.
In this case we consider $\overline{G} = \pi(G) \in \mathbb{F}_q^{n\times n}$.
As above, $\overline{G}$ is the Gram matrix of a $(\delta,1,2\delta)$-ETF in an orthogonal geometry on $\mathbb{F}_q^{d'}$, and $d' \leq d$ with equality if $\delta \neq 0$.
\end{proof}

We have a partial converse to Proposition~\ref{prop: project real ETF}.

\begin{proposition}
\label{prop: no wrap implies real}
Suppose there is an ETF of $n$ vectors in an orthogonal geometry on $\mathbb{F}_q^d$.
If $q = p^l$ is odd and $p > 2n-5$, then there is an ETF of $n$ vectors in $\mathbb{R}^d$.
\end{proposition}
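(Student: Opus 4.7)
We may assume $n \geq d+2$ and $d \geq 2$; otherwise a real $d \times n$ ETF exists by a standard construction (orthonormal basis for $n = d$, simplex for $n = d+1$, or any $n$ real scalars for $d = 1$). Under these assumptions, $b \neq 0$, so by Remark~\ref{rem: rescale Gram} we rescale to $b=1$; the Gram matrix takes the form $G = S + aI$, where $S$ is a signature matrix with zero diagonal and $\pm 1$ off-diagonal. Expanding $G^2 = cG$ and invoking Proposition~\ref{prop: Welch} gives $S^2 = \mu S + (n-1) I$ in $\mathbb{F}_q$, with $\mu := c - 2a$.

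Lift $S$ entrywise to $\tilde S \in \mathbb Z^{n \times n}$ with entries in $\{0, \pm 1\}$. The diagonal of $\tilde S^2$ is $(n-1)I$, and each off-diagonal entry is an integer of absolute value at most $n - 2$. Since $p > 2n - 5$ with $p$ odd gives $p \geq 2n - 3 > 2(n-2)$, distinct integers in $[-(n-2), n-2]$ have distinct residues mod $p$. Comparing $(S^2)_{ij} = \mu S_{ij}$ in $\mathbb{F}_q$ with the corresponding integer computation therefore forces $\mu \in \mathbb{F}_p$ and lifts it uniquely to $\tilde\mu \in [-(n-2), n-2]$ with $\tilde S^2 = \tilde \mu \tilde S + (n-1) I$ in $\mathbb Z$. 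Because the discriminant $\Delta := \tilde\mu^2 + 4(n-1)$ is strictly positive, $\tilde S$ has two distinct real eigenvalues $\hat\alpha < 0 < \hat\beta$; by the classical characterization of real ETF signature matrices as symmetric $\{0,\pm 1\}$-matrices with zero diagonal and exactly two distinct eigenvalues (see~\cite{STDH07}), $\tilde S$ is the signature matrix of a real $m \times n$ ETF, where $m$ is the multiplicity of $\hat\beta$.

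It remains to show $m \in \{d, n - d\}$: once this holds, either $\tilde S$ itself (if $m = d$) or its negation $-\tilde S$ (if $m = n - d$, via Naimark complementation) is the signature matrix of the sought real $d \times n$ ETF. Proposition~\ref{prop: Welch} in the form $a(c-a) = n-1$ yields $\Delta \equiv c^2$ in $\mathbb{F}_q$, so we extend reduction mod $p$ to a unital ring homomorphism $\pi \colon \mathbb Z[\sqrt\Delta] \to \mathbb{F}_q$ by setting $\pi(\sqrt\Delta) = \pm c$; the resulting $\pi(\hat\alpha)$ is either $-a$ or $c - a$. Applying Proposition~\ref{prop: p-ranks}(a) to $\tilde S - \hat\alpha I$, whose eigenvalues are $0$ with multiplicity $n - m$ and $\sqrt\Delta$ with multiplicity $m$, we obtain $m \geq \operatorname{rank}_{\mathbb F_q} \pi(\tilde S - \hat\alpha I)$, with equality provided the nonzero eigenvalue $\sqrt\Delta$ survives, namely $c \neq 0$. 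The right-hand side equals either $\operatorname{rank}(G) = d$ or $\operatorname{rank}(G - cI) = n - d$ depending on the sign, so $m \in \{d, n-d\}$ whenever $c \neq 0$.

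The main obstacle is ruling out $c = 0$. From~\eqref{eq: nadc}, $c = 0$ forces $a = 0$ or $p \mid n$; the diagonal identity $a^2 + n - 1 = 0$ (read off from $G^2 = 0$) excludes $a = 0$ since $p > 2n - 5 \geq n - 1$ precludes $p \mid n-1$, and $p \mid n$ is incompatible with $p > 2n - 5$ once $n \geq 4$. As our reductions guarantee $n \geq d + 2 \geq 4$, both alternatives fail, so $c \neq 0$ and the argument above completes the proof.
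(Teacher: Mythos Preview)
Your argument is correct in spirit and takes a genuinely different route from the paper, but there is one technical slip.  You apply Proposition~\ref{prop: p-ranks}(a) to $M=\tilde S-\hat\alpha I$ using the ring $R=\mathbb Z[\sqrt\Delta]$, yet the diagonal entries of $M$ are $-\hat\alpha=-\tfrac12(\tilde\mu-\sqrt\Delta)$, and this element does \emph{not} lie in $\mathbb Z[\sqrt\Delta]$ when $\Delta$ is not a perfect square (the coefficient of $\sqrt\Delta$ is $-\tfrac12$).  The fix is painless: take instead $R=\mathbb Z[\hat\alpha]$, which is a ring of algebraic integers containing all entries and eigenvalues of $M$.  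A homomorphism $\pi\colon R\to\mathbb F_q$ is determined by sending $\hat\alpha$ to any root of $x^2-\tilde\mu x-(n-1)=(x+a)(x-(c-a))$ in $\mathbb F_q$; this exists whether or not $\hat\alpha\in\mathbb Z$, and gives $\pi(\hat\alpha)\in\{-a,\,c-a\}$ and $\pi(\sqrt\Delta)=\pi(\tilde\mu-2\hat\alpha)=\pm c$ as you need.  With this adjustment your $p$-rank computation goes through verbatim.

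By contrast, the paper first passes (via Naimark) to the regime $n\ge 2d$, writes the signature relation as $S^2=\beta S+(n-1)I$ with $\beta=\tfrac{n-2d}{d}a$, and exploits the algebraic identity $\beta^2=\tfrac{(n-2d)^2(n-1)}{d(n-d)}$.  After lifting, the real dimension $d'$ satisfies the analogous identity, and the paper matches $d'$ with $d$ or $n-d$ by viewing both as roots of the same quadratic in $\mathbb F_p$ and then invoking integer bounds, with a separate case analysis when $n\equiv_p 2d$.  Your approach replaces this polynomial matching by a single rank comparison through Proposition~\ref{prop: p-ranks}(a), which is cleaner and avoids the $n\equiv_p 2d$ split.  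The trade-off is that you must dispose of $c=0$ by hand (which you do correctly), whereas the paper sidesteps this by rewriting $c=\tfrac{n}{d}a$ using $d\not\equiv_p 0$.
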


We do not know if $p > 2n-5$ is a sharp bound.

\begin{proof}
By applying Proposition~\ref{prop: Naimark} if necessary, we may assume that $n \geq 2d$.
(Note that if the frame constant is zero then we already have $n \geq 2d$ by Proposition~\ref{prop: iso frames}.)
We may also assume that $n \geq d+2 \geq 4$, since otherwise a real ETF of the desired size is already known to exist.
Then it is easy to show 
\[ \max \{ d, n-d, n-1 \} \leq 2n - 5 < p, \]
so that $d \not\equiv_p 0$ and $n -d \not\equiv_p 0$.

Let $G = S + aI \in \mathbb{F}_q^{n\times n}$ be the Gram matrix of an $(a,1,c)$-ETF, where the signature matrix $S \in \mathbb{F}_q^{n\times n}$ has zeros on the diagonal and $\pm1$ off the diagonal.
Here $G^2 = c G = \tfrac{n}{d} a G$ by \eqref{eq: nadc}, or equivalently,
\[
S^2 = \tfrac{n-2d}{d} a S + \tfrac{n-d}{d} a^2 I.
\]
On the other hand, $S^2$ has entries in $\mathbb{F}_p$, and its diagonal is constantly $n-1$.
Therefore $\beta:= \tfrac{n-2d}{d}a$ lies in $\mathbb{F}_p$, and $\tfrac{n-d}{d} a^2 = n - 1$.
Consequently,
\begin{equation}
\label{eq: invert real 1}
\beta^2 = \frac{(n-2d)^2 (n-1)}{d(n-d)}. 
\end{equation}

We are going to lift $S$ to a matrix over $\mathbb{Z}$ and recover a real ETF.
Let $\pi \colon \mathbb{Z} \to \mathbb{F}_q$ be the unique unital ring homomorphism.
Define $\hat{\beta}$ to be the unique integer with $-\tfrac{1}{2}(p-1) \leq \hat{\beta} \leq \tfrac{1}{2}(p-1)$ and $\pi(\hat{\beta}) = \beta$, and define $\hat{S} \in \mathbb{Z}^{n\times n}$ to be the unique matrix with $\hat{S}_{ij} \in \{0, \pm1\}$ and $\pi(\hat{S}_{ij}) = S_{ij}$ for every $i,j$.
Since $\pi$ extends to a homomorphism of matrix rings, we have $\pi\left[ ( \hat{S}^2 )_{ij} - \hat{\beta} \hat{S}_{ij} \right] = 0$ whenever $i \neq j$.
On the other hand, when $i \neq j$,
\[
0
\leq
\left| ( \hat{S}^2 )_{ij} - \hat{\beta} \hat{S}_{ij} \right|
\leq
\left| ( \hat{S}^2 )_{ij} \right| + \left| \hat{\beta} \hat{S}_{ij} \right|
\leq (n-2) + |\hat{\beta}|
< p
\]
since $n < \tfrac{1}{2}(p+5)$ and $|\hat{\beta}| \leq \tfrac{1}{2}(p-1)$.
Therefore $( \hat{S}^2 )_{ij} - \hat{\beta} \hat{S}_{ij} = 0$ whenever $i \neq j$, and
\[
\hat{S}^2 = \hat{\beta} \hat{S} + (n-1) I.
\]
Hence $\hat{S}$ has just two eigenvalues, and one is negative since the trace is zero.
Writing $-\hat{a}$ for the negative eigenvalue, we see that $\hat{G} := \hat{S} + \hat{a} I$ is the Gram matrix of an ETF in $\mathbb{R}^{d'}$ for some $d'$.
Furthermore, proceeding as above we find
\begin{equation}
\label{eq: invert real 2}
\hat{\beta}^2 = \frac{(n-2d')^2 (n-1)}{d'(n-d')}.
\end{equation}

Arguing by cases, we now show $d' \in \{ d, n - d \}$.
First suppose $n \equiv_p 2d$.
Then~\eqref{eq: invert real 1} gives $\beta = 0$, so $\hat{\beta} = 0$ and $n = 2d'$ by~\eqref{eq: invert real 2}.
Since $n \geq 2d$ and $d \geq 2$ we have $0 \leq n - 2d \leq n-4 < p$.
It follows that $n = 2d$, hence $d' = d$.

Now suppose $n \not\equiv_p 2d$.
Rewriting~\eqref{eq: invert real 2}, we find that $d'$ is a root of
\[
h(x) = (4n-4 + \hat{\beta}^2)x^2 - n( \hat{\beta}^2 + 4n - 4)x + n^2(n-1) \in \mathbb{Z}[x].
\]
Likewise,~\eqref{eq: invert real 1} says that $d$ is a root of
\[
\overline{h}(x) = (4n-4 + \beta^2)x^2 - n( \beta^2 + 4n - 4)x + n^2(n-1) \in \mathbb{F}_p[x].
\]
By inspection of~\eqref{eq: invert real 1}, we see that the other root of $\overline{h}$ is $n-d$.
We have $d \not \equiv_p n-d$ by case assumption, so these are the only roots of $\overline{h}$ in $\mathbb{F}_p$.
Applying $\pi$ to $d'$ and the coefficients of $h$, we deduce that either $d' \equiv_p d$ or $d' \equiv_p n-d$.
In the first case, observe
\[ 0 \leq | d - d' | = \max \{ d - d', d' - d \} \leq n - 2 < p, \]
so that $d' = d$.
In the second case, we have $n - ( d + d') \leq n -2 < p$ and $(d + d') - n \leq d < p$, so $0 \leq | n - d - d' | < p$ and $d' = n - d$.
Now taking a Naimark complement produces a real $d \times n$ ETF.
\end{proof}

\subsection{Additional constraints}

The following theorem and its corollary may be seen as finite field versions of~\cite[Theorem 15]{STDH07}.
Our proof uses a similar strategy.

\begin{theorem}
\label{thm: orth ETF in Fp}
Suppose there is an $(a,1,c)$-ETF of $n$ vectors in an orthogonal geometry on $\mathbb{F}_q^d$, where $q = p^l$ is an odd prime power.
Then $a,c \in \mathbb{F}_{p^j}$ for some $j \in \{1,2\}$, with $j =1$ if $n \neq 2d$ or $c = 0$.
Consequently, there is an ETF of $n$ vectors in an orthogonal geometry on $\mathbb{F}_{p^j}^d$.
\end{theorem}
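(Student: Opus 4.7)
The plan is to exploit the signature matrix $S := G - aI$ of the given ETF, which has entries in the prime subfield $\mathbb{F}_p$. Indeed, the hypothesis $b = 1$ forces $G_{ij}^2 = 1$ for $i \neq j$, so $G_{ij} = \pm 1$; hence $S_{ii} = 0$ and $S_{ij} = \pm 1 \in \mathbb{F}_p$ off the diagonal. The characteristic polynomial $\chi_S$ therefore lies in $\mathbb{F}_p[x]$, regardless of the ambient field. To identify $\chi_S$ explicitly, I would read off the spectral decomposition of $G$: when $c \neq 0$ the relation $G(G - cI) = 0$ shows $G$ is diagonalizable with $c$-eigenspace equal to its image of dimension $d$, so $S$ has eigenvalues $-a$ and $c - a$ with multiplicities $n - d$ and $d$; when $c = 0$, $G^2 = 0$ forces $G$ to have algebraic multiplicity $n$ on the eigenvalue $0$, so $S$ has the single eigenvalue $-a$ with multiplicity $n$. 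In either case,
\[
\chi_S(x) \;=\; (x + a)^{n - d}\,\bigl(x - (c - a)\bigr)^d \;\in\; \mathbb{F}_p[x].
\]

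The key step is a Galois argument on $\chi_S$. Fix an algebraic closure $\overline{\mathbb{F}_p}$ of $\mathbb{F}_p$ containing $\mathbb{F}_q$, together with its Frobenius automorphism $\alpha \mapsto \alpha^p$. Since $\chi_S \in \mathbb{F}_p[x]$, unique factorization in $\overline{\mathbb{F}_p}[x]$ forces the multiset of roots of $\chi_S$ (with multiplicities) to coincide with its Frobenius image. If $c = 0$, the only root is $-a$ with multiplicity $n$, so $a^p = a$ and $a \in \mathbb{F}_p$, yielding $j = 1$. If $c \neq 0$, the two eigenvalues $-a$ and $c - a$ are distinct, and Frobenius either fixes both of them---giving $a, c \in \mathbb{F}_p$ and $j = 1$---or swaps them. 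The swap forces the multiplicities $n - d$ and $d$ to agree, i.e., $n = 2d$, and places $a, c$ in $\mathbb{F}_{p^2}$. Hence $j = 2$ can occur only when $n = 2d$ and $c \neq 0$, which is precisely the first assertion of the theorem.

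To obtain the ETF over $\mathbb{F}_{p^j}$, I would reinterpret $G = S + aI$ as a matrix with entries in $\mathbb{F}_{p^j}$; its rank is preserved under restriction of scalars (hence still equal to $d$), and it continues to satisfy $G^2 = cG$ together with $G_{ii} = a$ and $G_{ij}^2 = 1$ for $i \neq j$. Proposition~\ref{prop: orthogonal ETF from Gram matrix} then produces an $(a, 1, c)$-ETF of $n$ vectors in an orthogonal geometry on $\mathbb{F}_{p^j}^d$. The main subtlety I foresee is the $c = 0$ branch, where $G$ is nilpotent and typically not diagonalizable, so the eigenvalue picture collapses to a single root; one must argue via the full algebraic multiplicity rather than the cleaner diagonalization available when $c \neq 0$, but the unique-factorization formulation absorbs both cases uniformly.
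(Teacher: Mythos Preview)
Your argument is correct and follows essentially the same route as the paper: both compute $\chi_S(x)=(x+a)^{n-d}(x+a-c)^d\in\mathbb{F}_p[x]$ and then use Galois theory to force $a,c\in\mathbb{F}_{p^2}$, with the swap of the two roots (your Frobenius phrasing) being exactly the case where the minimal polynomial of $-a$ has degree~$2$ (the paper's phrasing), and the multiplicity comparison ruling this out when $n\neq 2d$. The one step you skip that the paper makes explicit is verifying $d<n$, so that $-a$ genuinely appears as a root when $c\neq 0$; this follows since $d=n$ would give $G=cI$, contradicting the off-diagonal entries $\pm1$.
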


We emphasize that the hypothesis $n \neq 2d$ is not to be confused with the stronger condition $n \not\equiv_p 2d$.

\begin{proof}
Let $G \in \mathbb{F}_q^{n\times n}$ be the Gram matrix of the given ETF, and define $S = G - aI$.
Notice that $S$ has entries in $\mathbb{F}_p$, and so
\begin{equation}
\label{eq: chiS1}
\chi_S(x) := \det(xI - S) \in \mathbb{F}_p[x].
\end{equation}
On the other hand, $G$ has minimal polynomial $m_G(x) = x(x-c)$ with only $0$ and $c$ as its roots.
If $c \neq 0$, then the Jordan normal form $D$ of $G$ is diagonal with $d = \operatorname{rank} G$ copies of $c$ and $n-d$ copies of $0$ on the diagonal, and if $c = 0$ then $D$ is upper triangular with all zeros on the diagonal.
In either case, we find that $\chi_G(x) := \det(xI - G) = (x-c)^d x^{n-d}$, and so
\begin{equation}
\label{eq: chiS2}
\chi_S(x) = \chi_G(x+a) = (x+a-c)^d(x+a)^{n-d}.
\end{equation}
By considering the Jordan normal form of $G$ again, we see that $d \neq n$.
(If $c=0$ then $d \leq \tfrac{1}{2}n < n$.
If $c \neq 0$ and $d = n$, then $D = cI = G$, despite being the Gram matrix of an $(a,1,c)$-ETF.)
Therefore $\chi_S(-a) = 0$.

Let $h \in \mathbb{F}_p[x]$ be the minimal polynomial of $-a$ over $\mathbb{F}_p$. 
By definition, this is the unique monic polynomial in $\mathbb{F}_p[x]$ that satisfies $h(-a) = 0$ and divides every other polynomial $k \in \mathbb{F}_p[x]$ with $k(-a) = 0$. 
Equivalently, it is the unique irreducible monic polynomial $h(x) \in \mathbb{F}_p[x]$ satisfying $h(-a) = 0 $. 
Using~\eqref{eq: chiS1} and~\eqref{eq: chiS2}, we see that $h(x)$ divides $\chi_S(x)$ in $\mathbb{F}_p[x]$, and that it has no roots in an algebraic closure of $\mathbb{F}_p$ other than $-a$ and, perhaps, $c-a$.
Since finite fields are perfect, $h(x)$ has no repeated roots, and it must be one of two possibilities:
\begin{itemize}
\item[(I)]
$h(x) = x+ a$, or
\item[(II)]
$h(x) = (x+ a)(x + a - c)$.
\end{itemize}
In both cases, $-a$ belongs to a quadratic extension of $\mathbb{F}_p$, and so $a \in \mathbb{F}_{p^2}$.
Considering that $G \neq 0$ has entries in $\mathbb{F}_{p^2}$ and satisfies $G^2 = cG$, we deduce that $c \in \mathbb{F}_{p^2}$ as well.
Moreover, if $c = 0$ then option (II) cannot occur since $h(x)$ has no repeated roots, and in this case we conclude that $a \in \mathbb{F}_p$.

Now suppose $n \neq 2d$ and $c \neq 0$.
Assume for the sake of contradiction that option (II) occurs.
Then $h(x)$ is also the minimal polynomial of $c-a$ over $\mathbb{F}_p$. 
In the factorization
\[ \chi_S(x) = \prod_{i=1}^k f_i(x) \]
of $\chi_S(x)$ as a product of monic irreducible polynomials $f_i(x) \in \mathbb{F}_p[x]$, every factor $f_i(x)$ has either $-a$ or $c-a$ as a root by~\eqref{eq: chiS2}. 
By uniqueness of the minimal polynomial, every factor $f_i(x)$ equals $h(x)$, so that
\[ \chi_S(x) = h(x)^k = (x + a)^k (x + a - c)^k. \]
This contradicts~\eqref{eq: chiS2} since $a \neq a - c$ and $d \neq n-d$.
Therefore option (I) holds, and $a,c \in \mathbb{F}_p$.

In every case, the ``consequentally'' statement follows from Proposition~\ref{prop: orthogonal ETF from Gram matrix}.
\end{proof}

\begin{corollary}
\label{cor: integrality}
Suppose there is an ETF of $n \notin \{d, 2d\}$ vectors in an orthogonal geometry on $\mathbb{F}_q^d$, where $q = p^l$ is an odd prime power.
If $d \not \equiv_p 0$ and $n \not\equiv_p 1$, then $n \not\equiv_p d$ and $\overline{d(n-1)}(\overline{n-d})^{-1} \in \mathbb{F}_p^{\times 2}$.
\end{corollary}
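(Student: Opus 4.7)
The plan is to reduce everything to computations in $\mathbb{F}_p$ and then to extract both conclusions from the two universal ETF identities: Welch's relation and the trace identity $na = dc$.

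First I would normalize so that $b = 1$. Any frame satisfies $n \geq d$, and $n \notin \{d, 2d\}$ forces $n > d$; by Remark~\ref{rem: rescale Gram} we may therefore rescale the Gram matrix to obtain an $(a,1,c)$-ETF of the same size in some orthogonal geometry on $\mathbb{F}_q^d$. Since $n \neq 2d$, Theorem~\ref{thm: orth ETF in Fp} then places both $a$ and $c$ in $\mathbb{F}_p$. From this point on every relation among $a,c,n,d$ may be interpreted in $\mathbb{F}_p$.

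Next I would combine Proposition~\ref{prop: Welch}, namely $a(c-a) = n-1$, with equation~\eqref{eq: nadc}, namely $na = dc$. Since $d \not\equiv_p 0$, the trace identity gives $c = na/d$, and substituting into Welch's relation yields
\[ (n-d)\, a^2 = d(n-1) \qquad \text{in } \mathbb{F}_p. \]
If $n \equiv_p d$, the left side vanishes, forcing $d(n-1) \equiv_p 0$; combined with $d \not\equiv_p 0$ this gives $n \equiv_p 1$, contradicting hypothesis. Hence $n \not\equiv_p d$, so we may divide to get $a^2 = \overline{d(n-1)}\, \overline{(n-d)}^{-1}$. This value is nonzero, since $a \equiv_p 0$ would make Welch read $0 = n-1$, again contradicting $n \not\equiv_p 1$. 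Therefore $\overline{d(n-1)}\, \overline{(n-d)}^{-1} = a^2 \in \mathbb{F}_p^{\times 2}$, as required.

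I do not anticipate a serious obstacle; the only subtlety is tracking where each hypothesis is consumed. The assumption $n \notin \{d,2d\}$ drives both the normalization $b = 1$ (from $n > d$) and the descent of $a,c$ to $\mathbb{F}_p$ (from $n \neq 2d$); the assumption $d \not\equiv_p 0$ lets us invert $d$ in the trace relation; and $n \not\equiv_p 1$ separately rules out the two degenerate cases $a \equiv_p 0$ and $n \equiv_p d$.
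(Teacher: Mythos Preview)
Your proposal is correct and follows essentially the same route as the paper: normalize to $b=1$ via Remark~\ref{rem: rescale Gram}, descend $a$ to $\mathbb{F}_p$ via Theorem~\ref{thm: orth ETF in Fp} using $n\neq 2d$, and combine Proposition~\ref{prop: Welch} with the trace identity~\eqref{eq: nadc} to obtain $(n-d)a^2 = d(n-1)$ in $\mathbb{F}_p$, from which both conclusions follow. The paper's proof is more compressed (it writes the combined relation directly as $1 = (\overline{n-d})[\overline{d(n-1)}]^{-1}a^2$ and reads off both conclusions at once), but the logic is identical.
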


\begin{proof}
As explained in Remark~\ref{rem: rescale Gram}, there is an $(a,1,c)$-ETF of $n$ vectors in an orthogonal geometry on $\mathbb{F}_q^d$, where $a \in \mathbb{F}_p$ by Theorem~\ref{thm: orth ETF in Fp}.
Proposition~\ref{prop: Welch} states that $1 = (\overline{n-d}) [ \overline{d(n-1)} ]^{-1} a^2$, so $\overline{n-d} \neq 0$ and 
\[ \overline{d(n-1)}(\overline{n-d})^{-1} = a^2 \in \mathbb{F}_p^{\times 2}. \qedhere \]
\end{proof}

\section{Relationships with modular SRGs}
\label{sec: SRG}

As in the real setting, ETFs in finite orthogonal geometries are closely associated with graphs given by the sign patterns in their signature matrices.
In this section we study this correspondence in detail.
Specifically, let $G$ be the Gram matrix of an $(a,b,c)$-ETF $\Phi$ of $n>d$ vectors in a $d$-dimensional orthogonal geometry over $\mathbb{F}_q$, where $q$ is odd.
After rescaling as in Remark~\ref{rem: rescale Gram}, we may assume $b = 1$.
Then the signature matrix $S := G - a I$ of $\Phi$ is symmetric with zeros on the diagonal and $\pm 1$ off the diagonal, and so it is the Seidel adjacency matrix of a graph $\Gamma$ on $n$ vertices.
A second graph related to $\Phi$ is obtained by multiplying each of the frame vectors by $\pm 1$ to produce another $(a,1,c)$-ETF $\Phi'$ with Gram matrix
\[
G' =
\left[ \begin{array}{cc} a & \mathbf{1}_v^\top \\ \mathbf{1}_v & \Sigma + a I_v \end{array} \right].
\]
Here $\Phi'$ is said to have \textbf{normalized signature matrix}, and $\Sigma$ is the Seidel adjacency matrix of a graph $\Gamma'$ on $n-1$ vertices.

In this section we characterize graphs $\Gamma'$ for which there exists $a$ such that $G'$ above is the Gram matrix of an ETF (Theorem~\ref{thm: Seidel Waldron}).
We also characterize graphs $\Gamma$ for which there exists $a$ such that $G$ above is the Gram matrix of an ETF and the all-ones vector is an eigenvector of $G$ (Theorem~\ref{thm: centered or axial}).
In both cases, we are led to a modular generalization of strongly regular graphs.
SRGs form a special subclass, and for this subclass we identify the ETF size and parameters directly from the SRG parameters (Theorem~\ref{thm: 2-graph*} and Theorem~\ref{thm: 2-graph}).

We frequently encounter matrices $S \in \{ 0, 1, -1 \}^{n\times n}$, and we sometimes abuse notation by using the same symbol for $S \in \mathbb{Z}^{n\times n}$ as well as its image $\overline{S} \in \mathbb{F}_q^{n\times n}$.
This should not cause any confusion in practice.

\subsection{ETFs with normalized signature matrix}
\begin{definition}
\label{def: mod SRG}
Given an odd prime $p$, a graph $\Gamma$ on vertex set $V$ of size $v$ is called a \textbf{$p$-modular strongly regular graph} with parameters $v,k,\lambda,\mu$ (briefly, a $(v,k,\lambda,\mu)\text{-SRG}_p$) if the following hold:
 	\begin{itemize}
	\item[(i)]
	every vertex $i \in V$ has valency $k_i \equiv_p k$,
	\item[(ii)]
	whenever $i,j\in V$ are adjacent, they have exactly $\lambda_{ij} \equiv_p \lambda$ neighbors in common,
	\item[(iii)]
	whenever $i,j \in V$ are distinct and non-adjacent, they have exactly ${\mu_{ij} \equiv_p \mu}$ neighbors in common.
	\end{itemize}

\end{definition}

Equivalently, $\Gamma$ is a $(v,k,\lambda,\mu)\text{-SRG}_p$ if and only if its $\{0,1\}$-adjacency matrix $A$ satisfies $A^2 \equiv_p \mu J + (\lambda - \mu)A + (k-\mu)I$.
Applying the operators on both sides of the equivalence to the vector $\mathbf{1}$, we see that the parameters of a $(v,k,\lambda,\mu)\text{-SRG}_p$ necessarily satisfy
\begin{equation}
\label{eq:SRGp params}
k(k-\lambda-1) \equiv_p \mu(v-k-1).
\end{equation}

The following characterization of modular SRGs can be proved easily using~\eqref{eq: Seidel from 01} and the inversion of a $4\times 4$ matrix.
We omit details.

\begin{lemma}
\label{lem: SRGp sig}
Let $\Sigma$ be the Seidel adjacency matrix of a graph $\Gamma$ on $v$ vertices.
The following are true for any choice of odd prime $p$:
	\begin{itemize}
	\item[(a)]
	If $\Gamma$ is a $(v,k,\lambda,\mu)\text{-SRG}_p$, then $\Sigma \mathbf{1} \equiv_p (v-2k-1)\mathbf{1}$ and
	\[
	\Sigma^2 \equiv_p (v-4k+2\lambda+2\mu)J + 2(\mu - \lambda -1)\Sigma + (4k - 2\lambda - 2\mu - 1)I.
	\]
	\item[(b)]
	Suppose there exist $\alpha,\beta,\gamma,\theta$ in an extension field $\mathbb{F}_q \geq \mathbb{F}_p$ such that
	$\Sigma \mathbf{1} \equiv \theta \mathbf{1}$ and $\Sigma^2 \equiv \alpha J + \beta \Sigma + \gamma I$ in $\mathbb{F}_q$.
	Then $\alpha,\beta,\gamma,\theta \in \mathbb{F}_p$, and $\Gamma$ is a $(v,k,\lambda,\mu)\text{-SRG}_p$ for
	\begin{equation}
	\label{eq:vklmu}
	\left\{
	\begin{array}{rcl}
	v & \equiv & \alpha + \gamma + 1 \\[4 pt]
	k & \equiv &  \tfrac{1}{2}( \alpha + \gamma - \theta) \\[4 pt]
	\lambda & \equiv & \tfrac{1}{4}(2 \alpha - \beta + \gamma - 2 \theta - 3) \\[4 pt]
	\mu & \equiv & \tfrac{1}{4}( 2 \alpha + \beta + \gamma - 2 \theta + 1),
	\end{array}
	\right.
	\end{equation}
	where the inverses are taken in $\mathbb{F}_p$.
	\end{itemize}
\end{lemma}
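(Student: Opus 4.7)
The plan is to handle both parts via the change-of-basis identity $\Sigma = J - 2A - I$ from \eqref{eq: Seidel from 01}, together with the basic relations $J^2 = vJ$, $A\mathbf{1} \equiv_p k\mathbf{1}$, $AJ = JA \equiv_p kJ$ (using symmetry of $A$), and $A^2 \equiv_p \mu J + (\lambda - \mu)A + (k-\mu)I$ encoded in the SRG$_p$ definition. For (a), I would first compute $\Sigma \mathbf{1} = (v - 2k - 1)\mathbf{1}$ directly from the row sums. Next, I would expand $\Sigma^2 = (J - 2A - I)^2$, apply the identities above to obtain an expression of the form $\alpha' J + \beta' A + \gamma' I$ modulo $p$, and then use $A = \tfrac{1}{2}(J - \Sigma - I)$ to convert the $A$-contribution into a $\Sigma$-contribution. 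Collecting terms yields the claimed formula. This is purely a bookkeeping exercise with no conceptual obstruction.

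For (b), my first step is to verify that $\alpha, \beta, \gamma, \theta \in \mathbb{F}_p$. Since $\Sigma$ has integer entries, so do the entries of $\Sigma\mathbf{1}$ and $\Sigma^2$; the common entry $\theta$ of $\Sigma\mathbf{1}$ is thus the reduction of an integer modulo $p$. Reading off the entries of $\Sigma^2 \equiv \alpha J + \beta \Sigma + \gamma I$ in the three cases $\Sigma_{ij} = 0, \pm 1$, one finds that $\alpha + \gamma$, $\alpha + \beta$, and $\alpha - \beta$ are each integral modulo $p$; since $p$ is odd this forces $\alpha, \beta, \gamma \in \mathbb{F}_p$ individually. (In degenerate cases where one type of off-diagonal entry is absent, $\beta$ is not uniquely determined by the relation and can simply be chosen in $\mathbb{F}_p$.)

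Once the coefficients are known to lie in $\mathbb{F}_p$, I would invert the change of basis by substituting $\Sigma = J - 2A - I$ into both hypotheses. The eigenvector relation becomes $2A\mathbf{1} \equiv_p (v - 1 - \theta)\mathbf{1}$, directly yielding $k \equiv_p \tfrac{1}{2}(v - 1 - \theta)$. The quadratic relation, after expansion and collection of coefficients of $J$, $A$, and $I$, produces a system that, matched against the SRG$_p$ identity $A^2 \equiv_p \mu J + (\lambda - \mu)A + (k - \mu)I$, determines $v, k, \lambda, \mu$ as explicit linear combinations of $\alpha, \beta, \gamma, \theta$ with denominators dividing $4$ (invertible since $p$ is odd). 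This is precisely the inversion mentioned in the statement, and writing it out carefully produces the formulas in \eqref{eq:vklmu}. The main obstacle is simply maintaining accurate bookkeeping---in particular, recovering $v$ from the $J$-coefficient of $\Sigma^2$ (which absorbs a $vJ$ contribution from $J^2$) rather than from a direct identity---but there is no conceptual difficulty.
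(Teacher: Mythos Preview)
Your proposal is correct and matches the paper's approach: the paper states that the lemma ``can be proved easily using~\eqref{eq: Seidel from 01} and the inversion of a $4\times 4$ matrix'' and omits details, which is exactly the change-of-basis computation you outline. Your additional care in verifying $\alpha,\beta,\gamma,\theta\in\mathbb{F}_p$ and handling the degenerate case where $\beta$ is underdetermined is appropriate and fills in what the paper leaves implicit.
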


The theorem below may be considered as a $p$-modular version of Seidel's classical theorem relating strongly regular graphs with real ETFs~\cite{S91,T77,W09}.

\begin{theorem}
\label{thm: Seidel Waldron}
Let $\Sigma$ be the Seidel adjacency matrix of a graph $\Gamma$ on $v = n-1$ vertices, and define
\[
S = 
\left[
\begin{array}{cc}
0 & \mathbf{1}_v^\top \\
\mathbf{1}_v & \Sigma
\end{array}
\right].
\]
Given an odd prime power $q = p^l$, there exist $a,c \in \mathbb{F}_q$ such that ${S+aI} \in \mathbb{F}_q^{n \times n}$ is the Gram matrix of an $(a,1,c)$-ETF if and only if $\Gamma$ is a $(v,k,\lambda,\mu)\text{-SRG}_p$ and all of the following hold in $\mathbb{F}_q$:
	\begin{itemize}
	\item[(i)]
	$k \equiv_p 2 \mu$,
	\item[(ii)]
	$v \equiv_p 3k - 2\lambda - 1$,
	\item[(iii)]
	there exists $\delta \in \mathbb{F}_q$ such that $\delta^2 \equiv (\lambda - \mu)^2 + 4(k-\lambda)$.
	\end{itemize}
In that case, $S + aI$ is the Gram matrix of an $(a,1,c)$-ETF if and only if $a = \lambda - \mu + \epsilon \delta + 1$ and $c = 2 \epsilon \delta$ for some $\epsilon \in \{ \pm 1 \}$.
\end{theorem}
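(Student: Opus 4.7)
The plan is to invoke Proposition~\ref{prop: orthogonal ETF from Gram matrix} to reduce the existence of the ETF to the single matrix identity $(S+aI)^2 = c(S+aI)$: the diagonal entries of $S+aI$ are already $a$, and the squares of its off-diagonal entries are already $1$, so no further condition needs to be imposed. Expanding and rearranging shows this identity is equivalent to $S^2 \equiv (c-2a)\,S + a(c-a)\,I$. A direct block computation produces
\[
S^2 = \left[ \begin{array}{cc} v & \mathbf{1}_v^\top \Sigma \\ \Sigma \mathbf{1}_v & J_v + \Sigma^2 \end{array} \right],
\]
so matching the four blocks against $(c-2a)S + a(c-a)I$ yields three independent conditions: $v \equiv a(c-a)$, the eigenvector relation $\Sigma\mathbf{1} \equiv (c-2a)\mathbf{1}$, and the quadratic identity $\Sigma^2 \equiv -J + (c-2a)\Sigma + a(c-a)I$.

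For the forward direction, I would apply Lemma~\ref{lem: SRGp sig}(b) to the last two relations with $\alpha=-1$, $\beta=c-2a$, $\gamma=a(c-a)$, and $\theta=c-2a$. This certifies that $\Gamma$ is a $(v,k,\lambda,\mu)\text{-SRG}_p$, and substituting into~\eqref{eq:vklmu} gives $c-2a \equiv 2(\mu-\lambda-1)$ together with $v \equiv 4k-2\lambda-2\mu-1$. Combining the latter with the identity $v-2k-1 \equiv c-2a$ arising from the $\Sigma\mathbf{1}$ equation and~\eqref{eq: SRG sig eigs} produces $k \equiv 2\mu$, which is condition~(i), and then $v \equiv 3k-2\lambda-1$, which is condition~(ii). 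It remains to find $a$ and $c$. After substituting $c = 2a + 2(\mu-\lambda-1)$, the scalar condition $a(c-a) \equiv v$ becomes a quadratic in $a$ over $\mathbb{F}_p$, whose discriminant (simplified using (i) and (ii)) is the exact expression appearing in~(iii); its two roots correspond to the sign $\epsilon \in \{\pm 1\}$ and yield the announced formulas for $a$ and $c$.

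For the converse, I would assume $\Gamma$ is a $(v,k,\lambda,\mu)\text{-SRG}_p$ satisfying (i)--(iii), define $a$ and $c$ by the stated formulas, and verify the three block equations directly. The eigenvector identity and the top-left scalar follow from Lemma~\ref{lem: SRGp sig}(a) together with (i) and (ii), while the relation for $\Sigma^2$ reduces, after substituting Lemma~\ref{lem: SRGp sig}(a), to matching coefficients of $J$, $\Sigma$, and $I$; each match is a polynomial identity in $k,\lambda,\mu,\delta$ that simplifies using (i), (ii), and the defining equation for $\delta$. Proposition~\ref{prop: orthogonal ETF from Gram matrix} then certifies $G = S+aI$ as the Gram matrix of the desired $(a,1,c)$-ETF.

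The main obstacle I anticipate is purely algebraic bookkeeping: factors of $2$ and $4$ pervade the derivation (so the odd-characteristic hypothesis is essential), and the discriminant of the quadratic for $a$ must be simplified carefully using (i) and (ii) to recover the exact form stated in~(iii). Beyond that, the argument is a direct application of the structural results in Lemma~\ref{lem: SRGp sig} and Proposition~\ref{prop: orthogonal ETF from Gram matrix}.
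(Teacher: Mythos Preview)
Your plan is essentially identical to the paper's proof: reduce via Proposition~\ref{prop: orthogonal ETF from Gram matrix} to the three block conditions on $\Sigma$, invoke Lemma~\ref{lem: SRGp sig}(b) in the forward direction and Lemma~\ref{lem: SRGp sig}(a) in the reverse, and solve the resulting quadratic in $a$ by completing the square. Two minor caveats: your appeal to~\eqref{eq: SRG sig eigs} is misplaced (that display is for genuine SRGs); the identity $v-2k-1\equiv\theta$ you want comes straight from~\eqref{eq:vklmu} or from Lemma~\ref{lem: SRGp sig}(a). Also, when you actually carry out the discriminant computation you will obtain $(\lambda-\mu)^2+4(k-\mu)$, not the $(\lambda-\mu)^2+4(k-\lambda)$ printed in~(iii); the paper's own proof likewise arrives at $4(k-\mu)$, so the $4(k-\lambda)$ in the statement appears to be a typo rather than a discrepancy in your argument.
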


If $\mu \not\equiv_p 0$, then condition (i) above implies (ii) for any $(v,k,\lambda,\mu)\text{-SRG}_p$, as seen from~\eqref{eq:SRGp params}.
Similarly, (ii) implies (i) if $k - \lambda - 1 \not\equiv_p 0$.
However, (i) and (ii) are independent in general.

\begin{proof}
First we make a reduction.
For any choice of $a,c\in \mathbb{F}_q$, the matrix
\[
G 
:= S + a I
= \left[ \begin{array}{cc}
a & \mathbf{1}_v^\top \\
\mathbf{1}_v & \Sigma + aI_v
\end{array} \right]
\in \mathbb{F}_q^{n\times n}
\]
satisfies $G^2 = cG$ if and only if
\[
\left[ \begin{array}{cc}
a^2 + v & \mathbf{1}_v^\top(\Sigma + 2aI_v) \\
(\Sigma + 2aI_v) \mathbf{1}_v & \Sigma^2 + J_v + 2a\Sigma + a^2 I_v
\end{array} \right]
=
\left[ \begin{array}{cc}
ac & c\mathbf{1}_v^\top \\
c\mathbf{1}_v & c(\Sigma + aI_v)
\end{array} \right].
\]
This occurs if and only if:
	\begin{itemize}
	\item[(I)]
	$\Sigma \mathbf{1}_v \equiv (c-2a)\mathbf{1}_v$, 
	\item[(II)]
	$\Sigma^2 \equiv -J_v + (c-2a)\Sigma + (ac-a^2)I_v$,
	\item[(III)]
	$v \equiv ac-a^2$.
	\end{itemize}
In any case $\Sigma^2$ has constant diagonal $v-1$, so that (II) is equivalent to the combination of (II) and (III).
By Proposition~\ref{prop: orthogonal ETF from Gram matrix}, $G$ is the Gram matrix of an $(a,1,c)$-ETF if and only if (I) and (II) hold.

In the forward direction, suppose there exist $a,c\in \mathbb{F}_q$ such that $G$ is the Gram matrix of an $(a,1,c)$-ETF.
We prove (i)--(iii) and deduce the values of $a$ and $c$.
Since (I) and (II) hold, $\Sigma \mathbf{1}_v \equiv \theta \mathbf{1}_v$ and $\Sigma^2 \equiv \alpha J_v + \beta \Sigma + \gamma I_v$ for
\begin{equation}
\label{eq: alpha beta theta gamma}
\alpha = -1, \quad \beta = \theta = c - 2a, \quad \gamma = ac-a^2.
\end{equation}
By Lemma~\ref{lem: SRGp sig}(b), $\Gamma$ is a $(v,k,\lambda,\mu)\text{-SRG}_p$ with parameters as in~\eqref{eq:vklmu}.
In particular,
\[
2(k - 2\mu) \equiv (\theta - \beta) - (\alpha + 1) \qquad \text{and} \qquad 2(v-3k+2\lambda+1) \equiv (\theta-\beta) + (\alpha +1).
\]
Since $\theta = \beta$ and $\alpha = -1$, we have 
\begin{equation}
\label{eq: i and ii}
k \equiv_p 2 \mu \qquad \text{and}  \qquad v \equiv_p 3k - 2\lambda - 1 \equiv_p 6 \mu - 2 \lambda - 1.
\end{equation}
This gives (i) and (ii).
For (iii), observe from~\eqref{eq: alpha beta theta gamma} and~\eqref{eq:vklmu} that $c = \beta + 2a$ and
\[
v \equiv \gamma = ac - a^2 = a^2 + \beta a.
\]
Completing the square to solve for $a$, we find that
\begin{equation}
\label{eq: Delta square}
4(a + \tfrac{\beta}{2})^2 \equiv 4v + \beta^2 =: \Delta.
\end{equation}
Another application of~\eqref{eq:vklmu} gives $\beta \equiv 2(\mu - \lambda - 1)$, so by~\eqref{eq: i and ii}, the right-hand side of~\eqref{eq: Delta square} is
\[
\Delta \equiv 4(6 \mu - 2\lambda - 1) + 4(\mu - \lambda - 1)^2 \equiv 4[ (\lambda - \mu)^2 + 4(k-\mu) ].
\]
Therefore $a + \tfrac{\beta}{2} \in \mathbb{F}_q$ satisfies $(\alpha + \tfrac{\beta}{2})^2 \equiv \tfrac{1}{4}\Delta \equiv (\lambda - \mu)^2 + 4(k - \mu)$.
This gives~(iii).
Additionally, for either choice of $\delta \in \mathbb{F}_q$ such that $\delta^2  \equiv (\lambda - \mu)^2 + 4(k - \mu)$ we have $a = - \tfrac{\beta}{2} + \epsilon \delta \equiv \lambda - \mu + \epsilon \delta + 1$ for some $\epsilon \in \{ \pm 1 \}$, while $c = 2a + \beta = 2 \epsilon \delta$.

In the reverse direction, suppose $\Gamma$ is a $(v,k,\lambda,\mu)\text{-SRG}_p$ that satisfies (i)--(iii).
Choose $\delta \in \mathbb{F}_q$ such that $\delta^2 \equiv (\lambda - \mu)^2 + 4(k - \mu)$.
Fix $\epsilon \in \{ \pm 1 \}$, and define $a = \lambda -\mu + \epsilon \delta + 1$ and $c = 2 \epsilon \delta$.
In order to prove that $G$ is the Gram matrix of an $(a,1,c)$-ETF, we show~(I) and~(II).
By Lemma~\ref{lem: SRGp sig}(a), $\Sigma \mathbf{1}_v \equiv \theta \mathbf{1}_v$ and $\Sigma^2 \equiv \alpha J_v + \beta \Sigma + \gamma I_v$, where 
\begin{align*}
\alpha &\equiv v - 4k + 2\lambda + 2\mu \equiv -1, \\
\beta &\equiv 2(\mu - \lambda - 1), \\
\gamma &\equiv 4k - 2\lambda - 2\mu - 1 \equiv v, \\
\theta &\equiv v - 2k - 1 \equiv 2(\mu - \lambda - 1).
\end{align*}
We have $c - 2a \equiv 2(\mu - \lambda -1) \equiv \beta \equiv \theta$, which gives (I).
For (II), observe as above that $4(a + \tfrac{\beta}{2})^2 = 4 \delta^2 \equiv 4v + \beta^2$, hence $a^2 + \beta \equiv v$.
Consequently,
\[
ac - a^2 = a^2 + \beta a \equiv v \equiv \gamma.
\]
Comparing coefficients, we find that (II) holds.
\end{proof}

\begin{theorem}
\label{thm: 2-graph*}
Let $\Sigma$ be the Seidel adjacency matrix of a nontrivial SRG with $f \neq g$.
Define
\[
G 
= \left[ \begin{array}{cc} 2r+1 & \mathbf{1}_v^\top \\ \mathbf{1}_v & \Sigma + (2r+1)I \end{array} \right]
\in \mathbb{Z}^{n\times n},
\]
where $n = v+1$.
Suppose $p$ is an odd prime that divides both $k - 2\mu$ and $v - 3k + 2\lambda + 1$.
Then $\overline{G} \in \mathbb{F}_p^{n\times n}$ is the Gram matrix of a $(2r+1,1,2r-2s)$-ETF in an orthogonal geometry on $\mathbb{F}_p^d$, where 
\[ d = \operatorname{rank}_p \overline{\Sigma +(2r+1) I}. \]
Furthermore, $d = g+1$ if $r \not\equiv_p s$, whereas $d \leq \min\{ f+1, g+1 \}$ if $r \equiv_p s$.
\end{theorem}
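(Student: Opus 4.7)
My strategy is to deduce the ETF from Theorem~\ref{thm: Seidel Waldron}, then show that its dimension coincides with $\operatorname{rank}_p \overline{M}$ for $M := \Sigma + (2r+1)I$, and finally compute (or bound) this rank via Proposition~\ref{prop: p-ranks}.

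First I would verify the three hypotheses of Theorem~\ref{thm: Seidel Waldron} with $q = p$. The SRG $\Gamma$ is trivially a $(v,k,\lambda,\mu)\text{-SRG}_p$, and the two divisibility assumptions are exactly conditions (i) and (ii) there. For (iii), the hypothesis $f \neq g$ forces $r, s \in \mathbb{Z}$, so the integer $\delta := r - s$ satisfies $\delta^2 = (\lambda - \mu)^2 + 4(k - \mu)$ by~\eqref{eq: eig quad}. Taking $\epsilon = 1$, the theorem's formulas give $a = (r+s) + (r-s) + 1 = 2r + 1$ and $c = 2(r-s)$, so $\overline{G}$ is the Gram matrix of a $(2r+1, 1, 2r-2s)$-ETF in an orthogonal geometry on $\mathbb{F}_p^d$ where $d = \operatorname{rank}_p \overline{G}$.

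For the dimension identification $\operatorname{rank}_p \overline{G} = \operatorname{rank}_p \overline{M}$, the crucial ingredient is the congruence $(\overline{\Sigma} - \theta_r I)(\overline{\Sigma} - \theta_s I) \equiv_p -J$, which follows from~\eqref{eq: SRG S1} together with (i) and (ii). Any $\mathbf{v}_0 \in \ker \overline{M}$ is killed by $(\overline{\Sigma} - \theta_s I)(\overline{\Sigma} - \theta_r I)$ and hence also by the commuting product $-J$, forcing $\mathbf{1}^\top \mathbf{v}_0 \equiv_p 0$; thus $[0, \mathbf{v}_0]^\top \in \ker \overline{G}$. Applying the same identity to $\mathbf{e}_1$ produces $\mathbf{1}_v = \overline{M}\mathbf{u}$ for $\mathbf{u} := -(\overline{\Sigma} - \theta_s I)\mathbf{e}_1$, so for each $x \in \mathbb{F}_p$ the coset $\{-x\mathbf{u} + \mathbf{v}_0 : \mathbf{v}_0 \in \ker \overline{M}\}$ solves $\overline{M}\mathbf{v} \equiv_p -x\mathbf{1}_v$, and the remaining kernel condition for $\overline{G}$ reduces to $x(\theta_k - \theta_r - \theta_s) \equiv_p x(k - 2\mu) \equiv_p 0$. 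The first-coordinate projection $\ker \overline{G} \to \mathbb{F}_p$ is therefore surjective with kernel $\{[0,\mathbf{v}_0]^\top : \mathbf{v}_0 \in \ker \overline{M}\}$, yielding $\dim \ker \overline{G} = 1 + \dim \ker \overline{M}$ and hence the rank equality.

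Finally I would evaluate $\operatorname{rank}_p \overline{M}$ via Proposition~\ref{prop: p-ranks}. When $r \not\equiv_p s$, part~(b) applies: $M$ has integer entries, constant row sum $\theta_0 = v - 2k + 2r$ with one-dimensional $\theta_0$-eigenspace (since $\theta_k \notin \{\theta_r, \theta_s\}$), rational rank $g+1$, and minimal polynomial $(x - \theta_0)h(x)$ with $h(x) = x(x - (2r-2s))$; the congruence above gives $\overline{h(M)} \equiv_p -J \neq 0$, so $\epsilon = 0$. Since the remaining nonzero eigenvalue $2r - 2s$ is invertible mod $p$, part~(b) yields $\operatorname{rank}_p \overline{M} = g + 1$. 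When $r \equiv_p s$, we have $\overline{M} = \overline{\Sigma - \theta_r I} = \overline{\Sigma - \theta_s I}$, and part~(a) applied to each of these rational matrices yields $\operatorname{rank}_p \overline{M} \leq g + 1$ and $\operatorname{rank}_p \overline{M} \leq f + 1$ respectively. The main obstacle is the rank equality in the middle step; it hinges on the identity $\theta_k \equiv_p \theta_r + \theta_s$ (equivalent to $k \equiv_p 2\mu$), which is precisely what makes the kernel-expansion argument go through uniformly across the sub-cases depending on whether $\theta_s$ or $c = 2r - 2s$ vanishes in $\mathbb{F}_p$.
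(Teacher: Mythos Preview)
Your proof is correct, and in the central rank-equality step it is cleaner than the paper's.  The paper establishes $\operatorname{rank}_p \overline{G} = \operatorname{rank}_p \overline{M}$ by a column-space argument that needs an explicit $x$ with $\overline{M}x = \mathbf{1}_v$ and $\mathbf{1}_v^\top x \equiv_p -\theta_r$; it splits into the cases $\theta_k \not\equiv_p \theta_r$ (where $x = (\theta_k - \theta_r)^{-1}\mathbf{1}_v$ works) and $\theta_k \equiv_p \theta_r$ (where $x$ must be built over $\mathbb{Q}$ from the idempotent $E_r$ and its $p$-adic denominators cleared via B\'ezout's identity, a nontrivial maneuver).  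Your choice $\mathbf{u} = -(\overline{\Sigma} - \theta_s I)\mathbf{e}_1$, read directly off the mod-$p$ identity $(\overline{\Sigma} - \theta_r I)(\overline{\Sigma} - \theta_s I) \equiv_p -J$, gives $\overline{M}\mathbf{u} = \mathbf{1}_v$ and $\mathbf{1}^\top \mathbf{u} \equiv_p -\theta_r$ uniformly, with no case split and no denominator management; the kernel-dimension count then replaces the paper's row/column bookkeeping.  For the final rank computation you invoke Proposition~\ref{prop: p-ranks}(b) once for the whole case $r \not\equiv_p s$, whereas the paper handles the subcase $\theta_k - \theta_r \not\equiv_p 0$ separately via part~(a); your unified use of part~(b) is legitimate since $\overline{h(M)} = -\overline{J} \neq 0$ regardless.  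One point you asserted without justification: that the $\theta_0$-eigenspace is one-dimensional, i.e., $\theta_k \notin \{\theta_r,\theta_s\}$ over $\mathbb{Z}$.  This follows from~\eqref{eq: SRG eig rels 2}, since $v - 4k + 2\lambda + 2\mu \equiv_p -1$ forces it to be nonzero as an integer.
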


We omit the case $f = g$ for simplicity, since in that case the matrix $G$ above is already the Gram matrix of a real ETF with $n = 2d$, and Proposition~\ref{prop: project real ETF} applies.

\begin{proof}
Recall our notation~\eqref{eq: SRG sig eigs} for the eigenvalues of $\Sigma$, where $2r+1 = -\theta_r$.
Theorem~\ref{thm: Seidel Waldron} implies that $\overline{G}$ is the Gram matrix of an $(a,b,c)$-ETF with the given parameters, and the dimension is $d=\operatorname{rank}_p \overline{G}$ by Proposition~\ref{prop: orthogonal ETF from Gram matrix}.
It remains to prove that $\operatorname{rank}_p \overline{G} = \operatorname{rank}_p \overline{\Sigma -\theta_r I}$, and to deduce the ``furthermore'' statements.

To begin, we argue in cases to show $\operatorname{rank}_p \overline{G} = \operatorname{rank}_p \overline{\Sigma -\theta_r I}$.
First suppose $\theta_k \not\equiv_p \theta_r$.
Then $x := \overline{(\theta_k - \theta_r)}^{-1} \overline{\mathbf{1}_v} \in \mathbb{F}_p^v$ satisfies $\overline{ (\Sigma - \theta_r I) }x = \overline{ \mathbf{1}_v }$.
Taking transposes, we see that $\overline{ \mathbf{1}_v^\top }$ lies in the row space of $\overline{ \Sigma - \theta_r I }$, and so
\[
\operatorname{rank}_p \overline{ \Sigma  - \theta_r I}
=
\operatorname{rank}_p \overline{ \left[ \begin{array}{c}  \mathbf{1}_v^\top \\ \Sigma - \theta_r I \end{array} \right] }.
\]
If we can show that $\overline{ \mathbf{1}_v^\top x } \equiv -\theta_r$, then we can continue to find
\[
\operatorname{rank}_p \overline{ \left[ \begin{array}{c}  \mathbf{1}_v^\top \\ \Sigma - \theta_r I \end{array} \right] }
=
\operatorname{rank}_p \overline{ \left[ \begin{array}{cc} - \theta_r & \mathbf{1}_v^\top \\ \mathbf{1}_v & \Sigma - \theta_r I \end{array} \right] },
\]
where the additional column on the right already lies in the column space on the left, as the image of $x$.
It will follow that $\operatorname{rank}_p \overline{ \Sigma - \theta_r I} = \operatorname{rank}_p \overline{G}$ as desired.
In fact, the definition of $x$ gives $\overline{ \mathbf{1}_v^\top x } = \overline{(\theta_k - \theta_r)}^{-1} \overline{v}$, so we need only show $v \equiv_p -\theta_r(\theta_k - \theta_r)$.
In other words, to prove $\operatorname{rank}_p \overline{G} = \operatorname{rank}_p \overline{\Sigma -\theta_r I}$ it suffices in this case to show
\begin{equation}
\label{eq: 2-graph* rel 1}
v \equiv_p (2r+1)(v-2k+2r).
\end{equation}
By applying the relation $r^2 = (\lambda-\mu)r + (k-\mu)$ from~\eqref{eq: eig quad}, we find that
\begin{align*}
(2r+1)(v-2k+2r) 
&= 4r^2 + (2v-4k+2)r + v-2k \\
&= (2v-4k+4\lambda-4\mu+2)r + v + 2k - 4\mu.
\end{align*}
Substituting $v \equiv_p 3k - 2\lambda - 1$, we see that the coefficient on $r$ above is congruent to $2k - 4\mu$.
Since $k \equiv_p 2 \mu$ we deduce~\eqref{eq: 2-graph* rel 1}.
Therefore, $\operatorname{rank}_p \overline{G} = \operatorname{rank}_p \overline{ \Sigma -\theta_r I}$ when $\theta_k \not\equiv_p \theta_r$.

Next we prove that $\operatorname{rank}_p \overline{G} = \operatorname{rank}_p \overline{\Sigma -\theta_r I}$ in the case $\theta_k \equiv_p \theta_r$.
Eventually we will find a suitable choice for $x$ to use the argument above.
In the meantime, observe that, by~\eqref{eq: SRG eig rels 2} and our hypotheses,
\[
\tfrac{1}{v}(\theta_k - \theta_r)(v - 2k + 2s) = v -4k + 2\lambda + 2\mu \equiv_p -1.
\]
Rewriting the left-hand side, we find
\[
-1 \equiv_p \theta_k - \theta_r - \frac{ 2(\theta_k - \theta_r)(k-s) }{v},
\]
the fraction being an integer.
By assumption $\theta_k - \theta_r \equiv_p 0$, thus
\begin{equation}
\label{eq: 2-graph* rel 2}
\frac{ 2(\theta_k - \theta_r)(k-s) }{v} \in \mathbb{Z} \setminus p \mathbb{Z},
\end{equation}
and in particular $\theta_k - \theta_r \neq 0$.

Now we can produce $x$.
Working over $\mathbb{R}$ for the moment, recall that $\operatorname{ker} (\Sigma - \theta_rI)$ contains the column space of $(r-s)E_r = A-sI-(k-s)v^{-1} J$.
For an integer $m$ yet to be determined, define vectors $y = \left[ \begin{array}{cccc} m & 0 & \dotso & 0 \end{array} \right]^\top$ and $x = \tfrac{1}{\theta_k - \theta_r} \mathbf{1}_v + (r-s)E_r y$ in $\mathbb{R}^v$.
Simplifying, we find
\begin{equation}
\label{eq: vector x}
x =  (A-sI) y + \frac{ v - (\theta_k - \theta_r)(k-s)m}{v(\theta_k - \theta_r)} \mathbf{1}_v.
\end{equation}
Our goal is to choose $m$ in such a way that all factors of $p$ are cleared from the denominator of~\eqref{eq: vector x}.
In that case, we can interpret $x$ as a vector over $\mathbb{F}_p$, and the same relations that helped above will again hold.
Let $p^{\ell_1}$, $p^{\ell_2}$, and $p^{\ell_3}$ be the largest powers of $p$ that divide $v$, $\theta_k - \theta_r$, and $k-s$, respectively.
Then $p^{\ell_1 + \ell_2}$ is the largest power of $p$ that occurs in the denominator of~\eqref{eq: vector x}, and \eqref{eq: 2-graph* rel 2}~implies $\ell_1 = {\ell_2 + \ell_3}$.
Hence,
\[ 
\operatorname{gcd}\Bigl(p^{\ell_1 + \ell_2}, (\theta_k - \theta_r)(k-s)\Bigr) 
= \operatorname{gcd}\bigl(p^{\ell_1 + \ell_2}, p^{\ell_1} \bigr) 
= p^{\ell_1}
\]
divides $v$.
By Bezout's identity, there is an integer~$m$ for which $v - (\theta_k - \theta_r)(k-s)m$ is a multiple of $p^{\ell_1 + \ell_2}$.
With $m$ so defined, the denominator of~\eqref{eq: vector x} clears as desired.
We have $(\Sigma - \theta_r I) x = \mathbf{1}_v$ and $\mathbf{1}_v^\top x = \tfrac{v}{\theta_k - \theta_r}$, since $(r-s)E_r y$ lies in the kernel of $\Sigma - \theta_r I$ and the orthogonal complement of $\mathbf{1}_v$.
Additionally, all factors of $p$ clear from the denominator of $\tfrac{v}{\theta_k - \theta_r}$ by~\eqref{eq: 2-graph* rel 2}.
Overall, we may interpret $x$ as a vector in $\mathbb{F}_p^v$ for which the desired relations hold, and the same argument as above shows that $\operatorname{rank}_p \overline{G} = \operatorname{rank}_p \overline{ \Sigma - \theta_r I}$.

It remains to prove our ``furthermore'' statements about $d = \operatorname{rank}_p \overline{ \Sigma - \theta_r I}$.
We are going to use Proposition~\ref{prop: p-ranks}.
Define $M = {\Sigma + (2r+1) I} \in \mathbb{R}^{v \times v}$, so that $d = \operatorname{rank}_p \overline{M}$.
Here $M$ has eigenvalues $\theta_k - \theta_r$, $0$, $\theta_s - \theta_r$.
By~\eqref{eq: SRG S1} it satisfies
\begin{equation}
\label{eq: MJ}
M\bigl( M + (\theta_r - \theta_s)I \bigr) = (v - 4k + 2\lambda + 2\mu)J.
\end{equation}
Applying our hypotheses $v \equiv_p 3k-2\lambda - 1$ and $2\mu \equiv_p k$, we find
\begin{equation}
\label{eq: 2-graph* rel 3}
\overline{ M\Bigl( M + (\theta_r - \theta_s)I \Bigr) } = - \overline{J} \neq 0.
\end{equation}
Since $MJ  = (\theta_k - \theta_r)J$, we deduce from~\eqref{eq: MJ} and~\eqref{eq: 2-graph* rel 3} that $M$ has minimal polynomial
\[
x(x+ \theta_r - \theta_s)(x + \theta_r - \theta_k) =: (x+\theta_r - \theta_k) h(x)
\]
and spectrum $\sigma(M) = \{ (\theta_k - \theta_r)^1, 0^f, (\theta_s - \theta_r)^g \}$, with multiplicities denoted by superscripts and all three eigenvalues distinct.

We argue in cases.
If $r \not\equiv_p s$ and $\theta_k - \theta_r \not\equiv_p 0$, then $\theta_s - \theta_r \not\equiv_p 0$ and Proposition~\ref{prop: p-ranks}(a) immediately gives $d = g+1$.
If $r \not\equiv_p s$ and $\theta_k - \theta_r \equiv_p 0$, then Proposition~\ref{prop: p-ranks}(b) applies to produce $d = g + 1 - \epsilon$, where $\epsilon = 0$ since $\overline{h(M)} \neq 0$ by~\eqref{eq: 2-graph* rel 3}.
Hence $d = g + 1$ in this case as well.
Finally, suppose $r \equiv_p s$.
Then $\theta_r \equiv_p \theta_s$, and $\overline{M} = \overline{ \Sigma - \theta_r I } = \overline{ \Sigma - \theta_s I}$.
By Proposition~\ref{prop: p-ranks}(a), $\operatorname{rank}_p \overline{M} \leq \operatorname{rank} (\Sigma - \theta_r I) = g +1$ and also $\operatorname{rank}_p \overline{M} \leq \operatorname{rank} (\Sigma - \theta_s I) = f +1$.
This completes the proof.
\end{proof}

\subsection{ETFs with centroidal symmetry}
So far we have related graphs on $v$ vertices with ETFs having $n = v+1$ vectors.
Now we consider ETFs with $n = v$ vectors.
Specifically, we characterize ETFs having \textbf{centroidal symmetry}, in the sense that the all-ones vector is an eigenvector of the Gram matrix~\cite{FJMPW18}.

\begin{theorem}
\label{thm: centered or axial}
Let $\Sigma$ be the Seidel adjacency matrix of a graph $\Gamma$ on $v$ vertices.
The following are equivalent for any choice of odd prime power $q = p^l$:
	\begin{itemize}
	\item[(i)]
	there exist $a,c,\theta \in \mathbb{F}_q$ such that $G = \Sigma + aI$ is the Gram matrix of an $(a,1,c)$-ETF, and $G\mathbf{1} \equiv \theta \mathbf{1}$,
	\smallskip
	\item[(ii)]
	$\Gamma$ is a $(v,k,\lambda,\mu)\text{-SRG}_p$ with $v \equiv_p 4k - 2\lambda - 2\mu$, and there exists $\delta \in \mathbb{F}_q$ such that $\delta^2 \equiv (\lambda - \mu)^2 + 4(k-\mu)$.
	\end{itemize}
In that case, $\Sigma + aI$ is the Gram matrix of an $(a,1,c)$-ETF if and only if $a \equiv \lambda - \mu + \epsilon \delta + 1$ and $c \equiv 2 \epsilon \delta$ for some $\epsilon \in \{ \pm 1 \}$.
\end{theorem}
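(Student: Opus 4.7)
The plan is to mirror the structure of the proof of Theorem~\ref{thm: Seidel Waldron}, but without the $\mathbf{1}_v^\top$ bordering on the Gram matrix. Expanding $G^2 = (\Sigma + aI)^2 = \Sigma^2 + 2a\Sigma + a^2 I$ and comparing with $cG = c\Sigma + caI$, the equation $G^2 = cG$ provided by Proposition~\ref{prop: orthogonal ETF from Gram matrix} reduces to
\[
\Sigma^2 \equiv (c - 2a)\Sigma + a(c - a)\,I.
\]
Note the crucial absence of any $J$ term, in contrast with condition (II) in the proof of Theorem~\ref{thm: Seidel Waldron}. Meanwhile, the centroidal symmetry hypothesis $G\mathbf{1} \equiv \theta \mathbf{1}$ is equivalent to $\Sigma \mathbf{1} \equiv (\theta - a)\mathbf{1}$.

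For the forward direction, I would apply Lemma~\ref{lem: SRGp sig}(b) with $\alpha = 0$, $\beta = c-2a$, $\gamma = a(c-a)$, and eigenvalue $\theta - a$. The lemma concludes that $\Gamma$ is a $(v,k,\lambda,\mu)\text{-SRG}_p$, and the vanishing of $\alpha$ translates through the formulas~\eqref{eq:vklmu} into precisely $v \equiv_p 4k - 2\lambda - 2\mu$. Solving $\beta \equiv 2(\mu - \lambda - 1)$ for $c$ in terms of $a$ and then completing the square in the remaining relation $a(c-a) \equiv \gamma \equiv 4k - 2\lambda - 2\mu - 1$ gives
\[
(a + \mu - \lambda - 1)^2 \equiv (\lambda - \mu)^2 + 4(k - \mu),
\]
which produces the required $\delta \in \mathbb{F}_q$ and also yields $a \equiv \lambda - \mu + \epsilon \delta + 1$ and $c \equiv 2\epsilon\delta$ for some $\epsilon \in \{\pm 1\}$.

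For the reverse direction, I would invoke Lemma~\ref{lem: SRGp sig}(a): the hypothesis $v \equiv_p 4k - 2\lambda - 2\mu$ makes the $J$ coefficient vanish, yielding $\Sigma^2 \equiv 2(\mu - \lambda - 1)\Sigma + (4k - 2\lambda - 2\mu - 1)I$. Setting $a = \lambda - \mu + \epsilon\delta + 1$ and $c = 2\epsilon\delta$, a direct algebraic verification using $\delta^2 \equiv (\lambda-\mu)^2 + 4(k-\mu)$ confirms both $c - 2a \equiv 2(\mu - \lambda -1)$ and $a(c-a) \equiv 4k - 2\lambda - 2\mu - 1$, matching the coefficients above. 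Proposition~\ref{prop: orthogonal ETF from Gram matrix} then finishes, and the centroidal symmetry condition with $\theta \equiv a + v - 2k - 1$ follows automatically from Lemma~\ref{lem: SRGp sig}(a).

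The main obstacle is really just careful bookkeeping: keeping the sign $\epsilon$ straight and executing the completion of the square cleanly. No conceptual difficulty is anticipated, since the argument is essentially dual to that of Theorem~\ref{thm: Seidel Waldron}, with the one structural difference that the absence of a $J$ term in (II) causes the parameter constraint $v \equiv_p 4k - 2\lambda - 2\mu$ to appear as a derived characterization rather than as a separate hypothesis.
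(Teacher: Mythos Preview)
Your proposal is correct and follows essentially the same route as the paper's own proof: both expand $G^2=cG$ to $\Sigma^2\equiv(c-2a)\Sigma+a(c-a)I$, invoke Lemma~\ref{lem: SRGp sig} with $\alpha=0$ to extract the constraint $v\equiv_p 4k-2\lambda-2\mu$ and the coefficient identities $\beta\equiv 2(\mu-\lambda-1)$, $\gamma\equiv 4k-2\lambda-2\mu-1$, and then complete the square to obtain $\delta$ and the explicit formulas for $a$ and $c$. The reverse direction is likewise identical in substance.
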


\begin{proof}
Suppose (i) holds.
Then $\Sigma \mathbf{1} = (\theta-a) \mathbf{1}$, and the equation $G^2 = cG$ produces $\Sigma^2 = (c-2a) \Sigma + a(c-a)I$.
From Lemma~\ref{lem: SRGp sig} and the linear independence of $J,\Sigma,I$, we deduce that $\Gamma$ is a $(v,k,\lambda,\mu)\text{-SRG}_p$ with $v - 4k + 2\lambda + 2\mu \equiv 0$, $2(\mu - \lambda - 1) \equiv c-2a$, and $4k - 2\lambda - 2\mu - 1 \equiv a(c-a)$.
In particular, $c = 2( a + \mu - \lambda - 1)$ and
\begin{align*}
4k - 2\lambda - 2\mu - 1 &\equiv a(a+ 2\mu - 2\lambda - 2) \\
&\equiv [ (a + \mu - \lambda - 1) - (\mu - \lambda - 1)][ (a + \mu - \lambda - 1) + (\mu - \lambda - 1) ] \\
&\equiv (a+\mu - \lambda - 1)^2 - (\lambda - \mu + 1)^2.
\end{align*}
Rearranging, we find
\[
(a  +\mu - \lambda - 1)^2 \equiv (\lambda - \mu)^2 + 4(k - \mu).
\]
Hence there exists $\delta \in \mathbb{F}_q$ such that $\delta^2 = (\lambda - \mu)^2 + 4(k-\mu)$, and for either choice of such $\delta$ there exists $\epsilon \in \{ \pm  1\}$ such that $a + \mu - \lambda - 1 = \epsilon \delta$ and $c = 2 \epsilon \delta$.

Now assume~(ii).
Pick $\epsilon \in \{ \pm 1 \}$, and define $a = \lambda - \mu + \epsilon \delta + 1$ and $c = 2\epsilon \delta$.
Then $c -2a \equiv 2(\mu - \lambda - 1)$ and 
\[ a(c-a) \equiv [\epsilon \delta + \lambda - \mu - 1][ \epsilon \delta - (\lambda - \mu - 1)] \equiv \delta^2 - (\lambda - \mu + 1)^2 \equiv 4k - 2\lambda - 2\mu - 1. \]
From these identities, the assumption $v - 4k + 2\lambda + 2\mu \equiv_p 0$, and Lemma~\ref{lem: SRGp sig}(a), we conclude that $\Sigma^2 \equiv (c-2a) \Sigma + a(c-a) I$.
For $G = \Sigma + aI$, this says $G^2 = cG$.
Finally, Lemma~\ref{lem: SRGp sig}(a) supplies $\theta \in \mathbb{F}_q$ such that $\Sigma \mathbf{1} = (\theta-a) \mathbf{1}$, that is, $G \mathbf{1} = \theta \mathbf{1}$.
\end{proof}

\begin{theorem}
\label{thm: 2-graph}
Let $\Sigma$ be the Seidel adjacency matrix of a nontrivial SRG with $f \neq g$, and define $G = \Sigma + (2r+1) I$.
Suppose $p$ is an odd prime that divides $v - 4k + 2\lambda + 2\mu$.
Then $\overline{G}$ is the Gram matrix of a $(2r+1,1,2r-2s)$-ETF with $n=v$ vectors in an orthogonal geometry on $\mathbb{F}_p^d$, where $d = \operatorname{rank}_p \overline{G}$.
Furthermore:
	\begin{itemize}
	\item[(a)]
	If $r \not\equiv_p s$ and $v-2k+2r \not\equiv_p 0$, then $d = g+1$.
	\smallskip
	\item[(b)]
	If $r \not\equiv_p s$ and $v - 2k + 2r \equiv_p 0$, then $d = g$.
	\smallskip
	\item[(c)]
	If $r \equiv_p s$, then $d \leq \min \{ f+1, g+1 \}$.
	\smallskip
	\item[(d)]
	Let $p^m$ be the largest power of $p$ that divides $v$.
	If $p^{m+1}$ divides $v-2k+2r$, then $d \leq g$.
	\end{itemize}
\end{theorem}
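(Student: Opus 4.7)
The plan is to first invoke Theorem~\ref{thm: centered or axial} to identify $\overline{G}$ as the Gram matrix of an $(a,1,c)$-ETF with the stated parameters, and then to compute $d = \operatorname{rank}_p \overline{G}$ separately in each of the four cases. For the first step, the SRG eigenvalue relations~\eqref{eq: SRG eig rels} give $(r-s)^2 = (\lambda-\mu)^2 + 4(k-\mu)$, so we may take $\delta = r-s \in \mathbb{F}_p$; since $p \mid v - 4k + 2\lambda + 2\mu$ by hypothesis, condition~(ii) of Theorem~\ref{thm: centered or axial} is met, and substituting $\delta = r-s$ with $\epsilon = 1$ gives $a \equiv 2r+1$ and $c \equiv 2r-2s$. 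The equality $d = \operatorname{rank}_p \overline{G}$ then follows from Proposition~\ref{prop: orthogonal ETF from Gram matrix}.

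The rank computation exploits that $G = \Sigma - \theta_r I$ is symmetric with integer entries and integer eigenvalues $\theta_0 := v-2k+2r$ on $\mathbf{1}$, $0$ on the $\theta_r$-eigenspace $V_r$ (of multiplicity $f$), and $\theta_s - \theta_r = 2(r-s)$ on $V_s$ (multiplicity $g$), together with the identity $G(G - (\theta_s - \theta_r)I) = (v-4k+2\lambda+2\mu)J \equiv 0 \pmod p$ from~\eqref{eq: SRG S1}. Case~(a) is immediate from Proposition~\ref{prop: p-ranks}(a), since both nonzero eigenvalues of $G$ survive mod $p$, yielding $d = \operatorname{rank} G = g+1$. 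For case~(b), I would apply Proposition~\ref{prop: p-ranks}(b) with $\theta_0$ as the row sum (one-dimensional eigenspace $\operatorname{span}\mathbf{1}$) and $h(x) = x(x - (\theta_s - \theta_r))$: the congruence $\overline{h(G)} = \overline{(v-4k+2\lambda+2\mu)J} = 0$ gives $\epsilon = 1$, hence $d = (g+1) - 1 = g$ (the degenerate subcase $\theta_0 = 0$ over $\mathbb{Z}$ reduces directly to a one-line application of Proposition~\ref{prop: p-ranks}(a)). For case~(c), the congruence $r \equiv_p s$ forces $\overline{G} = \overline{\Sigma - \theta_r I} = \overline{\Sigma - \theta_s I}$, and Proposition~\ref{prop: p-ranks}(a) applied to each representation gives the bounds $d \leq g+1$ and $d \leq f+1$.

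The main obstacle is case~(d): when $r \equiv_p s$ both nonzero eigenvalues of $G$ vanish modulo $p$, so Proposition~\ref{prop: p-ranks} cannot be invoked on $G$ directly. The plan here is to introduce the auxiliary matrix
\[
T := vG - \theta_0 J.
\]
A direct computation using $J\mathbf{1} = v\mathbf{1}$ and $Ju = 0$ for $u \perp \mathbf{1}$ shows that $T\mathbf{1} = 0$, that $Tu = 0$ for $u \in V_r$, and that $Tu = v(\theta_s - \theta_r)u$ for $u \in V_s$, so $T$ is symmetric with integer entries and integer eigenvalues, and has rank $g$ over $\mathbb{Q}$. Writing $v = p^m v'$ with $\gcd(v',p) = 1$ and $\theta_0 = p^{m+1}\theta_0''$, every entry of $T$ is divisible by $p^m$, so $T/p^m$ lies in $\mathbb{Z}^{v \times v}$, and its reduction satisfies $\overline{T/p^m} = \overline{v'}\,\overline{G}$ with $\overline{v'} \neq 0$. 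Hence $\operatorname{rank}_p \overline{G} = \operatorname{rank}_p \overline{T/p^m}$, and Proposition~\ref{prop: p-ranks}(a) applied to $T/p^m$ delivers $d \leq g$. The delicate point is that the twin hypotheses $p^{m+1} \mid \theta_0$ and $p^m$ exactly dividing $v$ are precisely what is needed to clear every power of $p$ from the coefficient on $G$ in $T$ while forcing the $J$-coefficient to vanish modulo $p$.
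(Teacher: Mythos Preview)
Your proposal is correct and follows essentially the same approach as the paper: the paper establishes the ETF claim by directly computing $\overline{G}^2 = (\theta_s-\theta_r)\overline{G}$ from~\eqref{eq: SRG S1} rather than quoting Theorem~\ref{thm: centered or axial}, and in case~(d) it uses the auxiliary matrix $H = \tfrac{v}{p^m}(s-r)E_s$, but since your $T = vG - \theta_0 J$ equals $-2v(s-r)E_s$, the matrix $T/p^m$ is just $-2H$ and the two arguments coincide up to packaging.
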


Our proof of~(d) uses a strategy suggested in~\cite[\S 13.7]{BH12}.
The assumption $f \neq g$ is given only to make it obvious that $G$ has integer entries.
In fact, if $f = g$ then $v - 4k + 2\lambda + 2\mu = -1$, and there is no choice of $p$ to satisfy the hypotheses above.

\begin{proof}
Recall our notation~\eqref{eq: SRG sig eigs} for the eigenvalues of $\Sigma$.
By~\eqref{eq: SRG S1} and our assumption $v-4k+2\lambda+2\mu \equiv_p 0$, we see that $G = \Sigma - \theta_r I$ satisfies
\begin{equation}
\label{eq: 2-graph 1}
\overline{ G\Bigl( G + (\theta_r - \theta_s) I \Bigr) }
= \overline{ (v-4k+2\lambda+2\mu) J }
= 0.
\end{equation}
Therefore $\overline{G}^2 = (\theta_s - \theta_r) \overline{G}$, and it follows from Proposition~\ref{prop: orthogonal ETF from Gram matrix} that $\overline{G}$ is the Gram matrix of an ETF with the given parameters in an orthogonal geometry on $\mathbb{F}_p^d$, where $d = \operatorname{rank}_p \overline{G}$.

It remains to prove (a)--(d).
Observe that $G = \Sigma - \theta_r I$ has spectrum 
\[ \sigma(G) = {\{ (\theta_k - \theta_r)^1, 0^f, (\theta_s - \theta_r)^g \}} \]
with multiplicities denoted by superscripts, where it is possible that some eigenvalues coincide.
We have $\operatorname{rank} G = g+1$ if $\theta_k \neq \theta_r$, and $\operatorname{rank} G = g$ otherwise.

Proposition~\ref{prop: p-ranks}(a) immediately implies (a), since $\theta_k - \theta_r = v - 2k+2r$ and $\theta_s - \theta_r = 2(s-r)$.
If $\theta_k = \theta_r$, then (b) follows as well.
On the other hand, if $\theta_k \neq \theta_r$ then $G$ has minimal polynomial
\[ x(x-\theta_k + \theta_r)(x - \theta_s + \theta_r) =: (x-\theta_k + \theta_r)\, h(x), \]
and $\overline{h(G)} = 0$ by~\eqref{eq: 2-graph 1}.
In this case, (b) follows from Proposition~\ref{prop: p-ranks}(b).
For (c), we observe that $\operatorname{rank}_p \overline{G} \leq \operatorname{rank} G \leq g+1$, and if $r \equiv_p s$ then $\theta_r \equiv_p \theta_s$ and likewise
\[ \overline{G} = \overline{\Sigma - \theta_s I} \leq \operatorname{rank} (\Sigma - \theta_s I) \leq f +1. \]
Hence, $d \leq \min \{ f +1, g+1 \}$ if $r \equiv_p s$.

Finally, we prove (d).
Assume $p^{m+1}$ divides $v - 2k+2r$.
Then $p^m$ divides $k-r$.
Put $t = \tfrac{k-r}{p^m}$ and $z = \tfrac{v}{p^m}$, so that $z -2t \equiv_p 0$, and define $H = z(s-r) E_s$.
We will show $\overline{H}$ is a nonzero scalar multiple of $\overline{G}$, and it will follow that 
\[ d = \operatorname{rank}_p \overline{G} = \operatorname{rank}_p \overline{H} \leq \operatorname{rank} H = \operatorname{rank} E_s = g. \]
Using the definition of $E_s$ and the fact that $\Sigma = J - 2A - I$, we can rewrite
\[
H = zA - zrI - \tfrac{z(k-r)}{v}J = zA - zrI - tJ = (z-2t)A -t \Sigma - (zr+t) I.
\]
Since $z - 2t \equiv_p 0$ we have $\overline{H} = -t \overline{\Sigma} - (zr+t) \overline{I}$.
Next, the relation $z \equiv_p 2t$ implies $zr+t \equiv_p t(2r+1)$.
It follows that $\overline{H} = -t \overline{G}$.
Finally, $t = \frac{k-r}{p^m} \not\equiv_p 0$ since $p^{m+1}$ divides $v-2k+2r$ but not $v$.
This completes the proof.
\end{proof}

\begin{remark}
In one view of Theorem~\ref{thm: 2-graph}, the real positive-semidefinite matrix $G = \Sigma + (2r+1)I$ represents a real equiangular system that is not necessarily tight, but that becomes tight when projected into $\mathbb{F}_p^{v\times v}$.
In the case where $r \not\equiv_p s$, we could alternatively view $\overline{G}$ as arising from a real \emph{two-distance tight frame}~\cite{BGOY15} that is projected into a finite field in such a way that the two distances become opposites.
Indeed, both of $E_s$ and $I-E_r$ are Gram matrices of two-distance tight frames, and each has rational entries.
If $p$ divides $m := \frac{v-2k+2r}{\operatorname{gcd}(v,k-r)}$, then there is an integer $c$ such that $cE_s \in \mathbb{Z}^{v\times v}$ and $\overline{G} = \overline{cE_s}$.
If $p$ does not divide $m$, then we can similarly obtain $\overline{G}$ from projecting a multiple of $I-E_r$.
We omit the details.
However, in the case where $r \equiv_p s$, there are examples where $\overline{G}$ cannot be obtained by projecting any multiple of $E_s$, $E_r$, ${I-E_s}$, or ${I-E_r}$, since any integral multiple of any of these matrices is congruent to $J$ mod~$p$.
For instance, this occurs when $(v,k,\lambda,\mu) = (81,20,1,6)$ and $p=3$.

In fact, Theorem~\ref{thm: 2-graph} completely captures the phenomenon of projecting real two-distance tight frames into finite fields to make them equiangular.
Indeed, let $H \in \mathbb{R}^{n \times n}$ be the Gram matrix of a non-equiangular two-distance $c$-tight frame.
Suppose $H$ has integer entries and there is an odd prime power $p$ for which the projection $\overline{H} \in \mathbb{F}_p^{n \times n}$ is nonzero and represents an equiangular system.
By Theorem~1.2 in~\cite{BGOY15}, there is a nontrivial SRG for which $H$ is one of $cE_s$, $cE_r$, $c(I-E_r)$, or $c(I-E_s)$.
We may assume that $H = cE_s$ by replacing the graph with its complement or $H$ with $cI - H$, as necessary.
Then one can show that $f \neq g$, that $p$ divides $v - 4k + 2\lambda + 2\mu$, and that $\overline{H}$ is a scalar multiple of $\overline{\Sigma + (2r + 1) I}$.
We omit the details.
\end{remark}

\section{Examples and applications for SRGs}
\label{sec: ex}

We now give consequences of the preceding theory.
Most known SRGs imply the existence of ETFs over finite fields, and we collect small ETF sizes that occur in this way.
Using the infinite family of triangular graphs, we deduce that Gerzon's bound is attained in orthogonal geometries of infinitely many dimensions.
This stands in contrast with the real setting, where it is an open problem if Gerzon's bound is saturated in $\mathbb{R}^d$ for some $d \notin \{2,3,7,23\}$.
Many unknown SRGs would also imply the existence of ETFs, and this provides necessary conditions for their existence.
We collect this data as well.

\subsection{Existence}

\begin{example}
\label{ex: green SRGs}
Table~\ref{tbl: green SRGs} lists some ETFs in finite orthogonal geometries that arise from our constructions.
We created the table by iterating through SRG parameters in Brouwer's table~\cite{B}, as implemented in~\cite{Sage}.
For each parameter set, we proceeded as follows.
If $k \neq 2\mu$ then we applied Theorem~\ref{thm: 2-graph*} for all primes that satisfy its hypotheses.
Where $r \equiv_p s$, we created the Gram matrix for a single SRG with the given parameters, and computed its rank to find $d$.
If $k = 2\mu$ then the same construction from Theorem~\ref{thm: 2-graph*} creates a real ETF, and Proposition~\ref{prop: project real ETF} applies.
Our table omits ETFs that occur in this way, except in those cases where the resulting frame constant is $c=0$ (since then Proposition~\ref{prop: project real ETF} cannot predict the dimension).
In those cases, we again constructed the Gram matrix from a single SRG and recorded its rank.
For each parameter set, we then followed a similar procedure using Theorem~\ref{thm: 2-graph}, where the condition $k = 2\mu$ was replaced by $v = 4k - 2\lambda - 2\mu$.

Table~\ref{tbl: green SRGs} describes our results with $d < 40$.
We do not claim to record all consequences of Theorem~\ref{thm: 2-graph*} and Theorem~\ref{thm: 2-graph} from known SRGs in this range.
In particular, we did not attempt to compute dimensions for nonisomorphic SRGs with equal parameters in cases where the parameters may not predict $d$.
(In fact, it is known that $p$-rank can be an effective way to distinguish nonisomorphic SRGs~\cite{BVE92}.)

\begin{table}
\tiny{
\begin{tabular}{rrrrr}
$p$ & $d$ & $n$ & $a$ & $c$ \\ \hline
3 & 4 & 10 & 0 & 0 \\
3 & 7 & 28 & 0 & 0 \\
3 & 9 & 25 & 0 & 1 \\
3 & 9 & 37 & 0 & 1 \\
3 & 10 & 37 & 0 & 0 \\
3 & 10 & 55 & 0 & 0 \\
3 & 12 & 36 & 1 & 0 \\
3 & 12 & 49 & 0 & 1 \\
3 & 12 & 67 & 0 & 1 \\
3 & 13 & 91 & 0 & 0 \\
3 & 14 & 36 & 1 & 0 \\
3 & 14 & 45 & 1 & 0 \\
3 & 15 & 64 & 0 & 1 \\
3 & 15 & 106 & 0 & 1 \\
3 & 16 & 82 & 0 & 0 \\
3 & 16 & 136 & 0 & 0 \\
3 & 18 & 81 & 1 & 0 \\
3 & 18 & 100 & 0 & 1 \\
3 & 18 & 154 & 0 & 1 \\
3 & 19 & 49 & 1 & 1 \\
3 & 19 & 65 & 1 & 2 \\
3 & 19 & 81 & 1 & 0 \\
3 & 19 & 105 & 1 & 0 \\
3 & 19 & 190 & 0 & 0 \\
3 & 20 & 46 & 0 & 0 \\
3 & 20 & 57 & 1 & 0 \\
3 & 21 & 81 & 1 & 0 \\
3 & 21 & 121 & 0 & 1 \\
3 & 21 & 126 & 1 & 0 \\
3 & 21 & 162 & 1 & 0 \\
3 & 21 & 211 & 0 & 1 \\
3 & 22 & 50 & 1 & 2 \\
3 & 22 & 65 & 1 & 2 \\
3 & 22 & 77 & 1 & 2 \\
3 & 22 & 100 & 1 & 1 \\
3 & 22 & 145 & 0 & 0 \\
3 & 22 & 243 & 1 & 0 \\
3 & 22 & 253 & 0 & 0 \\
3 & 22 & 253 & 1 & 1 \\
3 & 22 & 276 & 1 & 0 \\
3 & 24 & 117 & 1 & 0 \\
3 & 24 & 169 & 0 & 1 \\
3 & 24 & 277 & 0 & 1 \\
3 & 25 & 101 & 1 & 2 \\
3 & 25 & 325 & 0 & 0 \\
3 & 27 & 196 & 0 & 1 \\
3 & 27 & 352 & 0 & 1 \\
3 & 28 & 100 & 1 & 1 \\
3 & 28 & 226 & 0 & 0 \\
3 & 28 & 351 & 1 & 0 \\
3 & 28 & 378 & 1 & 0 \\
3 & 28 & 406 & 0 & 0 \\
3 & 30 & 144 & 1 & 0 \\
3 & 30 & 256 & 0 & 1 \\
3 & 30 & 436 & 0 & 1 \\
3 & 31 & 122 & 1 & 2 \\
3 & 31 & 155 & 1 & 2 \\
3 & 31 & 496 & 0 & 0 \\
3 & 32 & 144 & 1 & 0 \\
3 & 32 & 177 & 1 & 0 \\
3 & 33 & 289 & 0 & 1 \\
3 & 33 & 529 & 0 & 1 \\
3 & 34 & 85 & 1 & 1 \\
3 & 34 & 325 & 0 & 0 \\
3 & 34 & 595 & 0 & 0 \\
3 & 35 & 120 & 1 & 0 \\
3 & 35 & 729 & 1 & 0 \\
3 & 35 & 1080 & 1 & 0 \\
3 & 35 & 1107 & 1 & 0 \\
3 & 36 & 361 & 0 & 1
\end{tabular}
}
\qquad
\tiny{
\begin{tabular}{rrrrr}
$p$ & $d$ & $n$ & $a$ & $c$ \\ \hline
3 & 36 & 631 & 0 & 1 \\
3 & 37 & 101 & 1 & 2 \\
3 & 37 & 112 & 1 & 1 \\
3 & 37 & 169 & 1 & 1 \\
3 & 37 & 703 & 0 & 0 \\
3 & 38 & 144 & 1 & 0 \\
3 & 39 & 225 & 1 & 0 \\
3 & 39 & 400 & 0 & 1 \\
3 & 39 & 742 & 0 & 1 \\
5 & 3 & 6 & 0 & 0 \\
5 & 9 & 26 & 0 & 0 \\
5 & 10 & 45 & 2 & 4 \\
5 & 11 & 56 & 2 & 2 \\
5 & 12 & 26 & 0 & 0 \\
5 & 12 & 78 & 2 & 3 \\
5 & 13 & 49 & 2 & 1 \\
5 & 15 & 65 & 2 & 4 \\
5 & 15 & 105 & 2 & 4 \\
5 & 16 & 81 & 2 & 2 \\
5 & 16 & 121 & 2 & 2 \\
5 & 17 & 153 & 2 & 3 \\
5 & 19 & 126 & 0 & 0 \\
5 & 20 & 56 & 0 & 2 \\
5 & 20 & 81 & 0 & 2 \\
5 & 20 & 190 & 2 & 4 \\
5 & 21 & 51 & 0 & 0 \\
5 & 21 & 176 & 0 & 0 \\
5 & 21 & 211 & 2 & 2 \\
5 & 22 & 253 & 2 & 3 \\
5 & 23 & 46 & 0 & 0 \\
5 & 23 & 101 & 0 & 0 \\
5 & 23 & 144 & 2 & 1 \\
5 & 23 & 276 & 0 & 0 \\
5 & 24 & 101 & 0 & 0 \\
5 & 25 & 81 & 0 & 2 \\
5 & 25 & 101 & 0 & 2 \\
5 & 25 & 170 & 2 & 4 \\
5 & 25 & 300 & 2 & 4 \\
5 & 26 & 196 & 2 & 2 \\
5 & 26 & 326 & 2 & 2 \\
5 & 27 & 126 & 0 & 0 \\
5 & 27 & 378 & 2 & 3 \\
5 & 30 & 121 & 0 & 2 \\
5 & 30 & 435 & 2 & 4 \\
5 & 31 & 100 & 1 & 0 \\
5 & 31 & 466 & 2 & 2 \\
5 & 32 & 176 & 0 & 0 \\
5 & 32 & 528 & 2 & 3 \\
5 & 33 & 100 & 1 & 0 \\
5 & 33 & 289 & 2 & 1 \\
5 & 35 & 126 & 0 & 0 \\
5 & 35 & 325 & 2 & 4 \\
5 & 35 & 595 & 2 & 4 \\
5 & 36 & 111 & 2 & 2 \\
5 & 36 & 361 & 2 & 2 \\
5 & 36 & 631 & 2 & 2 \\
5 & 37 & 703 & 2 & 3 \\
5 & 39 & 226 & 0 & 0 \\
7 & 12 & 66 & 3 & 6 \\
7 & 13 & 79 & 3 & 1 \\
7 & 14 & 105 & 3 & 5 \\
7 & 16 & 50 & 0 & 0 \\
7 & 17 & 81 & 3 & 4 \\
7 & 19 & 101 & 3 & 6 \\
7 & 19 & 171 & 3 & 6 \\
7 & 20 & 121 & 3 & 1 \\
7 & 20 & 191 & 3 & 1 \\
7 & 21 & 77 & 2 & 5 \\
7 & 21 & 231 & 3 & 5 \\
7 & 24 & 50 & 0 & 0
\end{tabular}
}
\qquad
\tiny{
\begin{tabular}{rrrrr}
$p$ & $d$ & $n$ & $a$ & $c$ \\ \hline
7 & 24 & 100 & 2 & 6 \\
7 & 26 & 325 & 3 & 6 \\
7 & 27 & 117 & 2 & 4 \\
7 & 27 & 352 & 3 & 1 \\
7 & 28 & 406 & 3 & 5 \\
7 & 31 & 121 & 2 & 6 \\
7 & 31 & 256 & 3 & 4 \\
7 & 33 & 290 & 3 & 6 \\
7 & 33 & 528 & 3 & 6 \\
7 & 34 & 145 & 2 & 4 \\
7 & 34 & 324 & 3 & 1 \\
7 & 34 & 562 & 3 & 1 \\
7 & 35 & 85 & 0 & 2 \\
7 & 35 & 630 & 3 & 5 \\
7 & 36 & 169 & 2 & 2 \\
7 & 37 & 344 & 0 & 0 \\
7 & 38 & 247 & 2 & 6 \\
11 & 16 & 120 & 3 & 6 \\
11 & 17 & 137 & 3 & 8 \\
11 & 18 & 171 & 3 & 1 \\
11 & 25 & 169 & 3 & 4 \\
11 & 27 & 197 & 3 & 6 \\
11 & 27 & 351 & 3 & 6 \\
11 & 28 & 225 & 3 & 8 \\
11 & 28 & 379 & 3 & 8 \\
11 & 29 & 435 & 3 & 1 \\
11 & 36 & 122 & 0 & 0 \\
11 & 37 & 223 & 5 & 1 \\
11 & 38 & 703 & 3 & 6 \\
11 & 39 & 97 & 4 & 6 \\
11 & 39 & 742 & 3 & 8 \\
13 & 7 & 14 & 0 & 0 \\
13 & 18 & 153 & 3 & 6 \\
13 & 19 & 172 & 3 & 8 \\
13 & 20 & 210 & 3 & 12 \\
13 & 29 & 225 & 3 & 4 \\
13 & 30 & 155 & 5 & 2 \\
13 & 31 & 257 & 3 & 6 \\
13 & 31 & 465 & 3 & 6 \\
13 & 32 & 289 & 3 & 8 \\
13 & 32 & 497 & 3 & 8 \\
13 & 33 & 561 & 3 & 12 \\
13 & 39 & 247 & 5 & 10 \\
17 & 9 & 18 & 0 & 0 \\
17 & 22 & 231 & 3 & 6 \\
17 & 23 & 254 & 3 & 8 \\
17 & 24 & 300 & 3 & 12 \\
17 & 36 & 127 & 5 & 3 \\
17 & 37 & 361 & 3 & 4 \\
17 & 39 & 401 & 3 & 6 \\
17 & 39 & 741 & 3 & 6 \\
19 & 24 & 276 & 3 & 6 \\
19 & 25 & 301 & 3 & 8 \\
19 & 26 & 351 & 3 & 12 \\
19 & 36 & 222 & 5 & 15 \\
23 & 28 & 378 & 3 & 6 \\
23 & 29 & 407 & 3 & 8 \\
23 & 30 & 465 & 3 & 12 \\
29 & 15 & 30 & 0 & 0 \\
29 & 34 & 561 & 3 & 6 \\
29 & 35 & 596 & 3 & 8 \\
29 & 36 & 666 & 3 & 12 \\
31 & 36 & 630 & 3 & 6 \\
31 & 37 & 667 & 3 & 8 \\
31 & 38 & 741 & 3 & 12 \\
37 & 19 & 38 & 0 & 0 \\
41 & 21 & 42 & 0 & 0 \\
53 & 27 & 54 & 0 & 0 \\
61 & 31 & 62 & 0 & 0 \\
73 & 37 & 74 & 0 & 0
\end{tabular}
}

\bigskip
\caption{
For each row above, an $(a,1,c)$-ETF of $n$ vectors exists in an orthogonal geometry on $\mathbb{F}_p^d$.
We omit data that can be deduced from the existence of a known real ETF using Proposition~\ref{prop: project real ETF}.
Example~\ref{ex: green SRGs} describes our methodology.
}
\label{tbl: green SRGs}
\end{table}
\end{example}

\begin{remark}
\label{rem: SRGs from ETFs}
Theorem~\ref{thm: Seidel Waldron} and Theorem~\ref{thm: centered or axial} describe the construction of $p$-modular SRGs from ETFs in orthogonal geometries.
In many cases, the resulting graphs are SRGs, and empirically this accounts for a large fraction of known SRGs.
By Theorem~\ref{thm: 2-graph}, a $(v,k,\lambda,\mu)$-SRG arises from the construction of Theorem~\ref{thm: centered or axial} unless $v-4k+2\lambda+2\mu$ is a power of $2$.
By Theorem~\ref{thm: 2-graph*}, it arises from the construction of Theorem~\ref{thm: Seidel Waldron} whenever there is an odd prime that divides both $k-2\mu$ and $v-3k+2\lambda+1$.
(Apply Proposition~\ref{prop: project real ETF} if $f=g$.)
Using Brouwer's table~\cite{B}, we find that $1142$ out of $1160$ known SRG parameters with $v \leq 1300$ can be constructed from an ETF using Theorem~\ref{thm: Seidel Waldron} or Theorem~\ref{thm: centered or axial}.
The exceptions are shown in Table~\ref{tbl: SRGs not from ETFs}.
In addition, $2824$ out of $2910$ feasible parameters with $v \leq 1300$ for which existence is unresolved would arise from an ETF in this way, if the SRG exists.

\begin{example}
The Steiner construction of \cite[Theorem~12.1]{S91} and \cite[Theorem~1]{FMT12} works equally well over finite fields to create real model ETFs.
It produces more ETF sizes in the finite field setting, since modular Hadamard matrices outnumber their real counterparts.
In particular, if $p$ is an odd prime and $m \equiv_p 4$, there is a $(3,1,8)$-ETF of size $\frac{m(m-1)}{2} \times m^2$ in the real model over $\mathbb{F}_p$, and it can be chosen to have centroidal symmetry.
In the Steiner construction, take the system with vertex set $[m]$ and all $2$-subsets as blocks.
A modular Hadamard matrix of the appropriate size is given by
\[
H = \left[ \begin{array}{cc}
1 & \mathbf{1}_{m-1}^\top \\
\mathbf{1}_{m-1} & 2I_{m-1} - J_{m-1}
\end{array} \right]
\in \mathbb{F}_p^{m\times m}.
\]
Combine these ingredients as in \cite[Theorem~1]{FMT12} to obtain the ETF described above.

Notably, the existence of a real ETF with size $\frac{m(m-1)}{2} \times m^2$ is currently unresolved for $m \in \{22,34\}$, but the Steiner construction provides examples in the real model over $\mathbb{F}_3$~\cite{FM:T,B}.
Furthermore, when $m$ is odd, Theorem~$A$ in~\cite{STDH07} implies there is no real ETF with size $\frac{m(m-1)}{2} \times m^2$, but the Steiner construction gives an example in the real model over some finite field as long as $|m-4|$ is not a power of~$2$.

%
%
\end{example}

\begin{table}
\begin{tabular}{rrrr}
$v$ & $k$ & $\lambda$ & $\mu$ \\ \hline
21 & 10 & 3 & 6 \\
40 & 12 & 2 & 4 \\
57 & 24 & 11 & 9
\end{tabular}
\qquad
\begin{tabular}{rrrr}
$v$ & $k$ & $\lambda$ & $\mu$ \\ \hline
70 & 27 & 12 & 9 \\
112 & 30 & 2 & 10 \\
120 & 42 & 8 & 18
\end{tabular}
\qquad
\begin{tabular}{rrrr}
$v$ & $k$ & $\lambda$ & $\mu$ \\ \hline
220 & 84 & 38 & 28 \\
280 & 117 & 44 & 52 \\
512 & 196 & 60 & 84
\end{tabular}
\bigskip
\caption{
Except for the nine parameter sets above and their complements, every known SRG with $v \leq 1300$ vertices can be constructed from an ETF over a finite field.
See Remark~\ref{rem: SRGs from ETFs}.
}
\label{tbl: SRGs not from ETFs}
\end{table}
\end{remark}

Next we interpret Theorem~\ref{thm: 2-graph} for a particular infinite family of SRGs.
Given a positive integer $d$, we write $\overline{T}(d+1)$ for the complement of the \textbf{triangular graph} whose vertices are the $2$-sets of $\{1,\dotsc,d+1\}$, with vertices adjacent in $\overline{T}(d+1)$ when they have trivial intersection.
It is an SRG with parameters $v = \binom{d+1}{2}$, $k = \tfrac{1}{2}(d-2)(d-1)$, $\lambda = \tfrac{1}{2}(d-4)(d-3)$, $\mu = \tfrac{1}{2}(d-3)(d-2)$ and $r = 1$, $s = 2-d$, $f = \tfrac{1}{2}(d^2-1)$, $g = d$ (see \cite{BVM} for the complementary parameters).

\begin{theorem}
\label{thm: triangular gerzon}
For every integer $d > 1$ and every odd prime $p$ that divides $d-7$, there exists a $(3,1,12)$-ETF of $n = \binom{d+1}{2}$ vectors in an orthogonal geometry on $\mathbb{F}_p^d$.
\end{theorem}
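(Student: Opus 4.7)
The plan is to specialize Theorem~\ref{thm: 2-graph} to the graph $\overline{T}(d+1)$, whose parameters and eigenvalues $v,k,\lambda,\mu,r,s,f,g$ are listed just before the statement. A direct computation yields
\[
v - 4k + 2\lambda + 2\mu = \tfrac{1}{2}(d-4)(d-7), \qquad v - 2k + 2r = \tfrac{1}{2}d(7-d),
\]
so every odd prime $p$ with $p\mid d-7$ satisfies the hypothesis of that theorem. The theorem then produces a $(2r+1,1,2r-2s) = (3,1,2d-2)$-ETF with $n = v = \binom{d+1}{2}$ vectors in an orthogonal geometry on $\mathbb{F}_p^{D}$, where $D = \operatorname{rank}_p \overline{G}$; since $d \equiv 7 \pmod p$, the parameter triple reduces to $(3,1,12)$ as required.

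What remains is to show $D = d$. The key observation is that $r - s = d-1$, so $r \equiv_p s$ forces $p \mid 6$, and since $p$ is odd this leaves only $p = 3$. Thus for $p \neq 3$ we have $r \not\equiv_p s$ together with $p \mid v - 2k + 2r$, and case (b) of Theorem~\ref{thm: 2-graph} immediately yields $D = g = d$. For $p = 3$, parts (c) and (d) of the theorem deliver only an upper bound: $d \equiv 1 \pmod 3$ forces $3 \nmid v = \tfrac{1}{2}d(d+1)$, so the exponent $m$ in part (d) is $0$, while $3 = 3^{m+1}$ divides $v - 2k + 2r$ by the displayed formula above; hence part (d) gives $D \leq g = d$. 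For the matching lower bound one invokes Gerzon's bound (Proposition~\ref{prop: Gerzon}): since $a^2 - b = 9 - 1 = 8 \not\equiv_3 0$, we have $\binom{d+1}{2} = n \leq \binom{D+1}{2}$, so $D \geq d$.

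The main obstacle is the case $p = 3$, where Theorem~\ref{thm: 2-graph} does not by itself determine $D$ and one must combine part (d) with Gerzon's bound as above. A minor technicality is the edge case $d = 2$: here $\overline{T}(3)$ is edgeless and hence not a nontrivial SRG, so Theorem~\ref{thm: 2-graph} does not literally apply. However, the only odd prime dividing $d - 7 = -5$ is $p = 5$, and for this single instance one verifies directly that $G = J_3 + 2 I_3 \in \mathbb{F}_5^{3\times 3}$ satisfies $G^2 = 2G$ and $\operatorname{rank}_5 G = 2$, so by Proposition~\ref{prop: orthogonal ETF from Gram matrix} it is the Gram matrix of a $(3,1,2)$-ETF of three vectors in an orthogonal geometry on $\mathbb{F}_5^2$.
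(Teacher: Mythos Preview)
Your proof is correct and follows essentially the same route as the paper's: specialize Theorem~\ref{thm: 2-graph} to $\overline{T}(d+1)$, use part~(b) when $p\neq 3$, and combine part~(d) with Gerzon's bound when $p=3$. Your treatment is in fact slightly more careful than the paper's, since you explicitly verify the hypothesis $a^2\neq b$ needed for Gerzon's bound and you handle the boundary case $d=2$ (where $\overline{T}(3)$ is edgeless and Theorem~\ref{thm: 2-graph} does not literally apply) by a direct check.
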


\begin{proof}
Take $\Gamma = \overline{T}(d+1)$ in Theorem~\ref{thm: 2-graph}.
From the above, we obtain 
\[ v - 4k + 2\lambda + 2\mu = \tfrac{1}{2}(d-7)(d-4), \]
with $v - 2k + 2r = -\tfrac{1}{2}d(d-7) \equiv_p 0$ and $r -s = d - 1 \equiv_p 6$.
Since $p$ is odd and divides $d-7$, Theorem~\ref{thm: 2-graph} produces an ETF with the given parameters in an orthogonal geometry on $\mathbb{F}_p^{d'}$.
If $p \neq 3$ then $r\not\equiv_p s$, and Theorem~\ref{thm: 2-graph}(b) gives $d' = g = d$.
If $p = 3$ then $d \equiv_p 1$, and the largest power of $p$ that divides $v \equiv_p 1$ is $p^m = 1$.
Here $p^{m+1} = p$ divides $v-2k+2r$, so we can apply Theorem~\ref{thm: 2-graph}(d) to obtain $d' \leq g = d$.
The inequality cannot be strict, or else the Gram matrix would produce an ETF that violates Gerzon's bound.
Therefore $d' = d$.
\end{proof}

\subsection{Necessary conditions}

\begin{example}
\label{ex: yellow SRGs}
Our results provide necessary conditions for the unresolved existence of SRGs whose parameters satisfy the hypotheses of Theorem~\ref{thm: 2-graph*} or Theorem~\ref{thm: 2-graph}.
Specifically, Table~\ref{tbl: yellow SRGs} lists the parameters of ETFs whose existence would be implied by such SRGs on $v \leq 300$ vertices.
(We omit the SRG parameter $\mu$ since it is uniquely determined by $v,k,\lambda$.)
Each row is a consequence of Theorem~\ref{thm: 2-graph*} (if $n = v+1$) or Theorem~\ref{thm: 2-graph} (if $n = v$).

Among the data we see that if \textbf{Conway's 99-graph} with parameters $(99,14,1,2)$ exists, then there is a $(2,1,4)$-ETF of $n=100$ vectors in an orthogonal geometry on $\mathbb{F}_5^{45}$.
The authors do not know if such an ETF exists.
It is well known that there is an ETF of $100$ vectors in $\mathbb{R}^{45}$, but if we attempt to project it into $\mathbb{F}_5$ with Proposition~\ref{prop: project real ETF}, then we obtain a $(4,1,0)$-ETF in dimension $d' \leq 45$.
These parameters do not match the ones predicted for Conway's graph, and in any case we cannot be sure from Proposition~\ref{prop: project real ETF} that $d'=45$.

\begin{table}
\tiny{
\begin{tabular}{rrr|rrrrr}
$v$ & $k$ & $\lambda$ & $p$ & $d$ & $n$ & $a$ & $c$ \\ \hline
69 & 48 & 32 & 13 & 24 & 69 & 5 & 3 \\
85 & 54 & 33 & 7 & 35 & 85 & 0 & 4 \\
85 & 70 & 57 & 5 & 35 & 86 & 0 & 4 \\
85 & 70 & 57 & 13 & 35 & 85 & 5 & 1 \\
88 & 27 & 6 & 5 & 32 & 88 & 2 & 3 \\
99 & 14 & 1 & 5 & 45 & 100 & 2 & 4 \\
100 & 33 & 8 & 3 & 34 & 101 & 1 & 2 \\
111 & 30 & 5 & 19 & 36 & 111 & 7 & 1 \\
112 & 36 & 10 & 3 & 48 & 112 & 0 & 2 \\
115 & 18 & 1 & 3 & 46 & 115 & 1 & 1 \\
115 & 18 & 1 & 17 & 45 & 115 & 7 & 16 \\
120 & 34 & 8 & 7 & 52 & 121 & 2 & 6 \\
120 & 84 & 58 & 3 & 57 & 121 & 0 & 2 \\
121 & 36 & 7 & 3 & 37 & 121 & 1 & 1 \\
121 & 36 & 7 & 5 & 36 & 121 & 2 & 2 \\
121 & 48 & 17 & 3 & 48 & 121 & 0 & 1 \\
133 & 32 & 6 & 3 & 57 & 133 & 0 & 2 \\
133 & 32 & 6 & 11 & 56 & 133 & 9 & 9 \\
133 & 88 & 57 & 3 & 57 & 133 & 0 & 1 \\
133 & 88 & 57 & 5 & 56 & 133 & 4 & 2 \\
133 & 108 & 87 & 5 & 57 & 133 & 2 & 3 \\
133 & 108 & 87 & 11 & 56 & 133 & 7 & 7 \\
136 & 105 & 80 & 3 & 52 & 137 & 1 & 2 \\
136 & 105 & 80 & 11 & 52 & 136 & 7 & 9 \\
162 & 21 & 0 & 5 & 57 & 163 & 2 & 3 \\
162 & 21 & 0 & 7 & 56 & 162 & 0 & 4 \\
162 & 92 & 46 & 7 & 24 & 163 & 5 & 1 \\
162 & 112 & 76 & 13 & 64 & 162 & 9 & 11 \\
162 & 138 & 117 & 7 & 70 & 162 & 0 & 4 \\
162 & 138 & 117 & 17 & 70 & 163 & 7 & 1 \\
169 & 42 & 5 & 3 & 43 & 170 & 1 & 2 \\
169 & 42 & 5 & 5 & 43 & 169 & 2 & 1 \\
169 & 42 & 5 & 7 & 42 & 169 & 0 & 5 \\
169 & 56 & 15 & 3 & 57 & 169 & 0 & 2 \\
169 & 56 & 15 & 5 & 56 & 169 & 4 & 1 \\
169 & 70 & 27 & 3 & 70 & 169 & 2 & 2 \\
175 & 108 & 63 & 13 & 43 & 175 & 7 & 4 \\
176 & 25 & 0 & 3 & 55 & 176 & 1 & 2 \\
176 & 25 & 0 & 7 & 56 & 176 & 0 & 6 \\
176 & 25 & 0 & 17 & 56 & 177 & 7 & 3 \\
176 & 70 & 24 & 5 & 56 & 177 & 4 & 3 \\
183 & 52 & 11 & 5 & 61 & 184 & 4 & 1 \\
183 & 52 & 11 & 29 & 60 & 183 & 9 & 26 \\
189 & 128 & 82 & 11 & 29 & 189 & 5 & 3 \\
189 & 140 & 103 & 5 & 91 & 189 & 1 & 4 \\
190 & 144 & 108 & 5 & 76 & 191 & 4 & 4 \\
196 & 39 & 2 & 3 & 49 & 197 & 1 & 2 \\
196 & 39 & 2 & 7 & 49 & 197 & 0 & 5 \\
196 & 39 & 2 & 31 & 48 & 196 & 7 & 26 \\
196 & 45 & 4 & 3 & 46 & 196 & 1 & 1 \\
196 & 75 & 26 & 3 & 76 & 197 & 2 & 1 \\
196 & 114 & 59 & 5 & 25 & 196 & 0 & 2 \\
204 & 28 & 2 & 13 & 84 & 204 & 9 & 7 \\
204 & 140 & 94 & 3 & 69 & 205 & 0 & 1 \\
205 & 68 & 15 & 3 & 40 & 205 & 1 & 1 \\
205 & 68 & 15 & 5 & 40 & 205 & 2 & 4 \\
208 & 45 & 8 & 5 & 91 & 209 & 1 & 4 \\
209 & 156 & 115 & 5 & 76 & 209 & 4 & 1 \\
209 & 156 & 115 & 7 & 77 & 210 & 2 & 5 \\
209 & 156 & 115 & 11 & 77 & 209 & 9 & 4 \\
210 & 33 & 0 & 5 & 55 & 210 & 2 & 4 \\
210 & 33 & 0 & 7 & 56 & 211 & 0 & 3 \\
210 & 76 & 26 & 5 & 96 & 211 & 3 & 3 \\
210 & 132 & 82 & 3 & 100 & 211 & 1 & 1 \\
216 & 129 & 72 & 13 & 44 & 217 & 7 & 10 \\
216 & 172 & 136 & 5 & 86 & 216 & 4 & 4 \\
217 & 128 & 72 & 3 & 63 & 217 & 0 & 2 \\
220 & 72 & 22 & 3 & 100 & 220 & 1 & 1 \\
225 & 48 & 3 & 7 & 49 & 225 & 0 & 2 \\
\end{tabular}
}
\qquad
\tiny{
\begin{tabular}{rrr|rrrrr}
$v$ & $k$ & $\lambda$ & $p$ & $d$ & $n$ & $a$ & $c$ \\ \hline
225 & 64 & 13 & 7 & 64 & 225 & 2 & 2 \\
225 & 128 & 64 & 5 & 25 & 226 & 0 & 3 \\
231 & 160 & 110 & 5 & 111 & 231 & 3 & 3 \\
232 & 33 & 2 & 3 & 87 & 232 & 0 & 1 \\
232 & 33 & 2 & 19 & 88 & 232 & 9 & 3 \\
232 & 33 & 2 & 23 & 88 & 233 & 9 & 22 \\
232 & 63 & 14 & 3 & 88 & 233 & 2 & 1 \\
232 & 63 & 14 & 11 & 88 & 232 & 0 & 6 \\
232 & 150 & 95 & 11 & 88 & 232 & 0 & 8 \\
232 & 154 & 96 & 3 & 28 & 232 & 2 & 2 \\
232 & 154 & 96 & 37 & 29 & 233 & 5 & 7 \\
235 & 52 & 9 & 3 & 94 & 235 & 2 & 2 \\
235 & 52 & 9 & 7 & 95 & 236 & 4 & 5 \\
235 & 52 & 9 & 23 & 95 & 235 & 11 & 3 \\
235 & 192 & 156 & 11 & 95 & 235 & 9 & 2 \\
236 & 180 & 135 & 37 & 60 & 236 & 7 & 30 \\
238 & 75 & 20 & 7 & 84 & 238 & 4 & 2 \\
243 & 66 & 9 & 13 & 44 & 243 & 7 & 10 \\
245 & 52 & 3 & 3 & 49 & 245 & 1 & 2 \\
245 & 52 & 3 & 23 & 50 & 245 & 7 & 9 \\
245 & 180 & 131 & 3 & 100 & 245 & 2 & 1 \\
245 & 180 & 131 & 19 & 101 & 245 & 11 & 9 \\
246 & 85 & 20 & 7 & 42 & 246 & 0 & 5 \\
249 & 88 & 27 & 17 & 84 & 249 & 11 & 15 \\
250 & 81 & 24 & 7 & 106 & 250 & 6 & 2 \\
250 & 153 & 88 & 3 & 46 & 250 & 1 & 1 \\
259 & 42 & 5 & 5 & 111 & 259 & 1 & 4 \\
259 & 42 & 5 & 23 & 112 & 259 & 11 & 1 \\
261 & 64 & 14 & 5 & 116 & 261 & 3 & 3 \\
261 & 64 & 14 & 13 & 117 & 261 & 0 & 2 \\
261 & 176 & 112 & 5 & 30 & 261 & 0 & 3 \\
261 & 180 & 123 & 13 & 117 & 261 & 0 & 4 \\
261 & 208 & 165 & 5 & 116 & 261 & 1 & 1 \\
261 & 208 & 165 & 19 & 117 & 261 & 11 & 7 \\
265 & 96 & 32 & 3 & 106 & 266 & 1 & 2 \\
265 & 96 & 32 & 17 & 105 & 265 & 13 & 15 \\
266 & 45 & 0 & 13 & 56 & 266 & 7 & 4 \\
273 & 80 & 19 & 3 & 91 & 274 & 2 & 2 \\
273 & 80 & 19 & 41 & 90 & 273 & 11 & 32 \\
273 & 200 & 145 & 7 & 105 & 273 & 4 & 2 \\
276 & 75 & 10 & 3 & 46 & 277 & 1 & 1 \\
276 & 75 & 10 & 11 & 45 & 276 & 7 & 7 \\
276 & 75 & 18 & 11 & 116 & 277 & 2 & 8 \\
276 & 165 & 92 & 17 & 46 & 277 & 7 & 10 \\
280 & 62 & 12 & 3 & 124 & 280 & 1 & 1 \\
280 & 62 & 12 & 17 & 125 & 281 & 13 & 11 \\
280 & 216 & 166 & 3 & 136 & 280 & 1 & 1 \\
280 & 216 & 166 & 5 & 136 & 281 & 3 & 3 \\
285 & 64 & 8 & 7 & 76 & 285 & 2 & 4 \\
285 & 64 & 8 & 11 & 75 & 285 & 9 & 10 \\
286 & 95 & 24 & 3 & 66 & 286 & 0 & 2 \\
286 & 95 & 24 & 5 & 66 & 287 & 4 & 3 \\
286 & 160 & 84 & 3 & 66 & 286 & 0 & 1 \\
288 & 41 & 4 & 29 & 124 & 289 & 11 & 24 \\
288 & 182 & 106 & 3 & 28 & 289 & 2 & 2 \\
288 & 245 & 208 & 5 & 141 & 289 & 1 & 4 \\
289 & 72 & 11 & 3 & 72 & 289 & 0 & 1 \\
289 & 72 & 11 & 7 & 73 & 289 & 2 & 6 \\
289 & 90 & 23 & 3 & 91 & 290 & 2 & 1 \\
289 & 90 & 23 & 5 & 91 & 289 & 1 & 4 \\
289 & 90 & 23 & 7 & 90 & 289 & 4 & 6 \\
289 & 108 & 37 & 3 & 109 & 289 & 1 & 1 \\
289 & 108 & 37 & 5 & 108 & 289 & 3 & 4 \\
289 & 126 & 53 & 3 & 126 & 289 & 0 & 1 \\
297 & 104 & 31 & 7 & 89 & 297 & 4 & 1 \\
300 & 184 & 108 & 7 & 70 & 301 & 2 & 5 \\
300 & 230 & 175 & 13 & 116 & 301 & 11 & 4 \\
300 & 273 & 248 & 11 & 118 & 301 & 7 & 9 \\
300 & 273 & 248 & 13 & 118 & 300 & 7 & 7
\end{tabular}
}

\bigskip
\caption{
Each row contains parameters of an SRG whose existence is unresolved.
If such an SRG exists, then there is an $(a,1,c)$-ETF of $n$ vectors in an orthogonal geometry on $\mathbb{F}_p^d$.
See Example~\ref{ex: yellow SRGs} for details.
}
\label{tbl: yellow SRGs}
\end{table}
\end{example}

\begin{example}
The \textbf{missing Moore graph} refers to a possible SRG with parameters $(3250, 57, 0, 1)$~\cite{HS60}.
At the time of writing its existence is unresolved.
If such an SRG exists, then it produces a $(4,1,8)$-ETF of $n=3251$ vectors in an orthogonal geometry on $\mathbb{F}_{11}^{1521}$ via Theorem~\ref{thm: 2-graph*}.
\end{example}

\begin{example}
It is known that there is no $(841,200,87,35)$-SRG~\cite{Neu81}.
Theorem~\ref{thm: 2-graph} gives another proof of this fact, since the complement of any such SRG would produce a $(0,1,1)$-ETF of $n=841$ vectors in an orthogonal geometry on $\mathbb{F}_5^{40}$, and this violates Gerzon's bound.
\end{example}

\begin{example}
\label{ex: Gerzon}
Table~\ref{tbl: Gerzon} summarizes feasible SRG parameters with $v \leq 1300$ that imply equality in Gerzon's bound via Theorem~\ref{thm: 2-graph}.
For the first and last rows, Theorem~\ref{thm: 2-graph} gives only an upper bound on $d$, but as in the proof of Theorem~\ref{thm: triangular gerzon} the inequality cannot be strict.

The authors do not know how to construct an ETF with any of the parameters given in Table~\ref{tbl: Gerzon}.
In particular, every instance of Gerzon equality that we currently know satisfies the identity $a^2 = (d+2)b$, but this equation does not occur anywhere in Table~\ref{tbl: Gerzon} (where $b=1$).
Proposition~\ref{prop: Welch} implies that $a^2 = (d+2)b$ whenever Gerzon's bound is saturated, $d\not\equiv_p 0$, and $d\not\equiv_p 1$.
The authors do not know if these conditions can be removed.

\begin{table}
\begin{tabular}{rrrr|rrrr}
$v$ & $k$ & $\lambda$ & $\mu$ & $p$ & $d$ & $a$ & $c$ \\ \hline
351 & 210 & 113 & 144 & 5 & 26 & 0 & 0 \\
703 & 520 & 372 & 420 & 3 & 37 & 2 & 2 \\
1081 & 486 & 177 & 252 & 5 & 46 & 2 & 2 \\
1275 & 728 & 379 & 464 & 7 & 50 & 0 & 0
\end{tabular}
\medskip
\caption{
If an SRG with the given parameters exists, then there is an $(a,1,c)$-ETF consisting of $n=\binom{d+1}{2}$ vectors in an orthogonal geometry on $\mathbb{F}_p^d$.
See Example~\ref{ex: Gerzon}.}
\label{tbl: Gerzon}
\end{table}
\end{example}

\begin{remark}
Since ETFs over finite fields give necessary conditions for the existence of SRGs, it would be helpful to have more machinery to prove nonexistence of equiangular systems with given parameters.
Gerzon's bound on the size of an equiangular system gives one such tool, but it is a blunt instrument since it takes little account of the parameters $(a,b)$ of an equiangular system (other than to require $a^2 \neq b$).
In comparison with the real case, it is natural to ask if there may be ``relative bounds'' on the number of vectors in an $(a,b)$-equiangular system that take greater advantage of the known parameters.
We leave this as an open problem.
\end{remark}

\begin{problem}[\cite{FFF1}]
Given an orthogonal geometry over $\mathbb{F}_q$ and parameters $a,b \in \mathbb{F}_q$, find an upper bound on the size of an $(a,b)$-equiangular system that improves upon Gerzon's bound wherever possible.
\end{problem}

\section{Failure of Gerzon equality in dimension~5}
\label{sec: 5x15}

By leveraging a finite search space, we now show that Gerzon's bound cannot be attained in any finite orthogonal geometry of dimension~5 over a field of odd characteristic.
This is the smallest dimension for which Theorem~\ref{thm: triangular gerzon} fails to provide an instance of Gerzon equality.

\begin{theorem}[Computer-assisted result]
\label{thm: no Gerzon in dim 5}
For any choice of odd prime power $q$, there does not exist an ETF of $n=15$ vectors in any orthogonal geometry on $\mathbb{F}_q^5$.
In particular, for any choice of $a,b \in \mathbb{F}_q$ with $b \neq a^2$, there is no $(a,b)$-equiangular system of $n=15$ vectors in any orthogonal geometry on $\mathbb{F}_q^5$.
\end{theorem}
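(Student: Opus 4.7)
The plan is to combine the structural results developed above with a modest computer search. Proposition \ref{prop: Gerzon} gives the reduction of the second claim to the first: any $(a,b)$-equiangular system of $15$ vectors in a $5$-dimensional quadratic space with $b \neq a^2$ saturates Gerzon's bound and hence is an ETF. To rule out such an ETF, Theorem \ref{thm: orth ETF in Fp} lets us assume $q = p$ is prime (since $n = 15 \neq 2d = 10$), and Remark \ref{rem: rescale Gram} lets us normalize $b = 1$ so that the Gram matrix has the form $G = S + aI$ for a signature matrix $S \in \{0, \pm 1\}^{15 \times 15}$. Proposition \ref{prop: no wrap implies real} then eliminates every odd prime $p > 2n - 5 = 25$, since existence in such a characteristic would force a real $5 \times 15$ ETF, which is known not to exist. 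Only $p \in \{3, 5, 7, 11, 13, 17, 19, 23\}$ remain.

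To narrow the surviving primes further, I would combine the Welch identity (Proposition \ref{prop: Welch}) $a(c - a) = 14$ with the trace identity $15a = 5c$ from~\eqref{eq: nadc}. When $p \notin \{3, 5, 7\}$ both are informative, yielding $c \equiv 3a$ and thus $a^2 \equiv 7 \pmod{p}$. A direct quadratic-residue check shows that $7$ is a square only for $p = 19$ (with $a \equiv \pm 8$) among $\{11, 13, 17, 19, 23\}$, so the other four primes are ruled out immediately. The degenerate primes are handled individually: $p = 3$ forces $c \equiv 0$ and $a \equiv \pm 1$; $p = 7$ forces $a \equiv c \equiv 0$; and for $p = 5$ the trace identity becomes vacuous, so one must consider every $(a, c)$ with $a(c - a) \equiv 4 \pmod{5}$. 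After applying the $\lambda = \pm 1$ rescaling that preserves $b = 1$, only a short finite list of concrete $(p, a, c)$ tuples remains.

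For each surviving tuple, the remaining task is to decide whether there exists a symmetric $S \in \{0, \pm 1\}^{15 \times 15}$ with zero diagonal satisfying $S^2 \equiv (c - 2a) S + a(c - a) I \pmod{p}$ and $\operatorname{rank}_p(S + aI) = 5$. I would attack this by computer, exploiting two reductions. First, negating the $i$-th row and column of $S$ produces an equivalent signature matrix, so one can fix a canonical form, for example the first row as $(0, 1, \ldots, 1)$. Second, the rows can be built one at a time with backtracking: once rows $1, \ldots, k$ are committed, the equation $S^2 \equiv (c - 2a) S + a(c - a) I$ provides $\mathbb{F}_p$-linear constraints on the entries of row $k + 1$, dramatically pruning the branches. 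The main obstacle is keeping the enumeration tractable, since the raw space of $2^{105}$ signature matrices is vastly too large without these reductions; the pruning and symmetry reduction must be implemented carefully to terminate in practice while ensuring that every possibility is genuinely excluded.
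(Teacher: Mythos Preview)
Your reduction to the prime list $p\in\{3,5,7,19\}$ and the corresponding $(a,c)$ values matches the paper's argument exactly (indeed, your use of the Welch identity for $p=5$ is slightly sharper than what the paper bothers with). The difference lies entirely in the computer search.

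You propose to enumerate signature matrices $S\in\{0,\pm1\}^{15\times15}$ by backtracking row by row, pruning via the quadratic relation $S^2\equiv(c-2a)S+a(c-a)I$. The paper instead works geometrically inside $\mathbb{F}_p^5$. By Witt's extension theorem one may fix two frame vectors $\psi_1,\psi_2$ up to isometry, then form the \emph{compatibility graph} on the finite set
\[
V=\{\psi\in\mathbb{F}_p^5:\langle\psi,\psi\rangle=a,\ \langle\psi,\psi_1\rangle=1,\ \langle\psi,\psi_2\rangle^2=1\},
\]
with an edge between $\varphi$ and $\psi$ whenever $\langle\varphi,\psi\rangle^2=1$; a $5\times15$ ETF then forces a clique of size $13$ in this graph (Lemma~\ref{lem: clique search}). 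Because the ambient space is only $5$-dimensional, $|V|$ is at most a few hundred in every case, and standard clique routines settle the question: for $p\in\{3,7,19\}$ the clique number is already below $13$, while for $p=5$ two parameter choices need a secondary rank check on the maximal cliques.

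Your approach is sound and has the pleasant feature of treating both isometry types of orthogonal geometry simultaneously, since any valid Gram matrix $G=S+aI$ determines its own discriminant. Its drawback is that the search lives in the $2^{105}$ space of sign patterns rather than in $\mathbb{F}_p^5$, so termination depends on the pruning being strong enough in practice---plausible, as you note, but not bounded \emph{a priori} the way the clique search is. The paper's route trades a small amount of extra setup (Witt's theorem, looping over the two form types) for a search space whose size one can read off in advance.
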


Our computer-assisted proof implements the following lemma, which can also be generalized for unitary geometries.
Similar methods have been used to prove nonexistence of SRGs~\cite{AM18,AM20}.
Recall that a \textbf{clique} in a graph refers to a complete subgraph.
A clique is \textbf{maximal} if it is not properly contained in any other clique.
The \textbf{clique number} of a graph is the size of its largest clique.

\begin{lemma}
\label{lem: clique search}
Suppose there is an $(a,1)$-equiangular system $\Phi$ of $n$ distinct vectors in an orthogonal geometry on $\mathbb{F}_q^d$, such that $\operatorname{rank} \Phi > 1$.
Choose any two linearly independent vectors $\psi_1,\psi_2 \in \mathbb{F}_q^d$ satisfying $\langle \psi_1, \psi_1 \rangle = \langle \psi_2, \psi_2 \rangle = a$ and $\langle \psi_1, \psi_2 \rangle = 1$.
Define
\[
V = \{ \psi \in \mathbb{F}_q^d : \langle \psi, \psi \rangle = a, \quad \langle \psi, \psi_1 \rangle = 1, \quad \langle \psi, \psi_2 \rangle^2 = 1 \},
\]
and let $\Gamma$ be the \textbf{compatibility graph} on vertex set $V$ with distinct vertices $\varphi, \psi$ adjacent whenever ${\langle \varphi, \psi \rangle^2 = 1}$.
Then $\Gamma$ contains a clique $K \subseteq V$ of size $n-2$, and $\Psi = \{ \psi_1, \psi_2 \} \cup K$ is an $(a,1)$-equiangular system of $n$ vectors.
Furthermore, if $\Phi$ is an $(a,1,c)$-ETF, then $K$ can be chosen so that $\Psi$ is an ETF with the same parameters.
\end{lemma}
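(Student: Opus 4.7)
The plan is to transform $\Phi$ via sign flips and then an isometry of $\mathbb{F}_q^d$ into a new $(a,1)$-equiangular system whose first two vectors coincide with $\psi_1, \psi_2$; the remaining $n-2$ vectors will then automatically lie in $V$ and form the desired clique.

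Because $\operatorname{rank}\Phi > 1$, I would relabel so that $\varphi_1, \varphi_2$ are linearly independent. Equiangularity forces each $\epsilon_i := \langle \varphi_i, \varphi_1 \rangle$ to lie in $\{\pm 1\}$, so replacing $\varphi_i$ with $\epsilon_i \varphi_i$ yields an equiangular system in which $\langle \varphi_i, \varphi_1 \rangle = 1$ for every $i$. This renormalization conjugates the Gram matrix $G$ by the diagonal sign matrix $D = \operatorname{diag}(\epsilon_i)$, and since $(DGD)^2 = D G^2 D = c\,DGD$ whenever $G^2 = cG$, both the $(a,1)$-equiangular property and tightness with frame constant $c$ are preserved. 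Now $\varphi_1, \varphi_2$ have the same $2{\times}2$ Gram matrix $\bigl[\begin{smallmatrix} a & 1 \\ 1 & a \end{smallmatrix}\bigr]$ as $\psi_1, \psi_2$, so Witt's extension theorem provides an isometry $T$ of $\mathbb{F}_q^d$ with $T\varphi_k = \psi_k$ for $k = 1,2$. Setting $\psi_i := T\varphi_i$ for $i \geq 3$, the identity $\langle T\varphi_i, T\varphi_j \rangle = \langle \varphi_i, \varphi_j \rangle$ immediately yields $\psi_i \in V$ and $\langle \psi_i, \psi_j \rangle^2 = 1$ for distinct $i,j \geq 3$, so $K := \{\psi_3, \ldots, \psi_n\}$ is a clique of size $n-2$ in $\Gamma$. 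Isometries preserve the Gram matrix, so $\Psi = \{\psi_1, \psi_2\} \cup K$ is $(a,1)$-equiangular, and becomes an ETF with parameter $c$ as soon as $\Phi$ is, giving the ``furthermore'' statement.

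The principal technical point is the invocation of Witt's extension theorem in the singular case $a^2 = 1$, where $\bigl[\begin{smallmatrix} a & 1 \\ 1 & a \end{smallmatrix}\bigr]$ is non-invertible and $\operatorname{span}\{\psi_1, \psi_2\}$ may be degenerate. This is manageable because Witt's theorem over any field of characteristic $\neq 2$ extends isometries of arbitrary subspaces, degenerate or not, of a non-degenerate quadratic space. A secondary subtlety concerns distinctness of $K$: the sign renormalization can collapse two vectors $\varphi_i, \varphi_j$ only if $\varphi_j = -\varphi_i$, and this forces $\langle \varphi_i, \varphi_j \rangle = -a$ with $a^2 = 1$; such a collision is therefore confined to the same degenerate regime and can be treated as a separate case (for instance, by replacing one offending vector before the sign adjustment, or by appealing to the ambient hypotheses that typically exclude $b = a^2$).
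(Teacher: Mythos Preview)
Your proof is correct and follows essentially the same approach as the paper: normalize signs so that $\langle \varphi_i,\varphi_1\rangle = 1$ for $i\geq 2$, then invoke Witt's extension theorem to carry $\varphi_1,\varphi_2$ to $\psi_1,\psi_2$ and read off the clique from the images of the remaining vectors. Your added remarks on the degenerate case $a^2=1$ (both for Witt and for distinctness after sign-flipping) go beyond what the paper spells out; the paper simply cites Witt from Grove and does not discuss these edge cases, which in any event do not arise in its application since Gerzon's bound already requires $a^2\neq b=1$.
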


\begin{proof}
Denote $\Phi = \{ \varphi_j \}_{j \in [n]}$, where $\varphi_1$ and $\varphi_2$ are linearly independent.
For any choice of signs $\epsilon_j \in \{\pm 1\}$, the system $\Phi' = \{ \epsilon_j \varphi_j \}_{j\in [n]}$ is again an $(a,1)$-equiangular system, and if $\Phi$ is an $(a,1,c)$-ETF then so is $\Phi'$.
By choosing the signs appropriately, we may therefore assume that $\langle \varphi_1, \varphi_j \rangle = 1$ for every $j \geq 2$.
The linear transformation $T \colon \operatorname{span} \{ \varphi_1, \varphi_2 \} \to \operatorname{span} \{ \psi_1, \psi_2 \}$ given by $T(\varphi_j) = \psi_j$ is easily seen to be an isometric isomorphism.
By Witt's Extension Theorem (Theorem~5.2 of~\cite{Gr02}) it extends to an isometric isomorphism $U \colon \mathbb{F}_q^d \to \mathbb{F}_q^d$.
For every $j \geq 3$, $\psi_j := U(\varphi_j)$ belongs to $V$, and $K := \{ \psi_j : j \geq 3\}$ is a clique in $\Gamma$.
The proof is complete since $\Psi = \{ \psi_1,\psi_2\} \cup K = \{ \psi_j \}_{j\in [n]}$ is the image of $\Phi$ under an isometric isomorphism.
\end{proof}

Our code for the following computer-assisted proof is provided as an ancillary file with the arXiv version of this paper.

\begin{proof}[Computer-assisted proof of Theorem~\ref{thm: no Gerzon in dim 5}]
Suppose for the sake of contradiction that there is an $(a,b,c)$-ETF of $n=15$ vectors in an orthogonal geometry on $\mathbb{F}_q^5$.
As explained in Remark~\ref{rem: rescale Gram}, we may assume that $b=1$, and, by Theorem~\ref{thm: orth ETF in Fp}, we may take $q=p$ to be prime with $a,c \in \mathbb{F}_p$.
Furthermore, Proposition~\ref{prop: no wrap implies real} implies that $p \leq 2n-5 = 25$, since there is no ETF of $n=15$ vectors in $\mathbb{R}^5$ by~\cite[Corollary~16]{STDH07}.
If $p \neq 5$ then we may divide~\eqref{eq: nadc} to obtain $c = 3a$, and then Proposition~\ref{prop: Welch} states $2a^2 = 14$, or $a^2 = 7$.
Since $7$ is a square, we conclude that $p \in \{ 3, 5, 7, 19 \}$.

When $p \in \{3, 7, 19\}$, there are at most two possibilities for $a \in \mathbb{F}_p$ and two possible orthogonal geometries on $\mathbb{F}_p^5$.
For each possibility, we used GAP~\cite{gap} to implement a choice of $\psi_1,\psi_2 \in \mathbb{F}_p^5$ as in Lemma~\ref{lem: clique search}.
Using the GRAPE package for GAP~\cite{grape}, we then implemented the compatibility graph $\Gamma$ of Lemma~\ref{lem: clique search} and computed its clique number.
The results are summarized in Table~\ref{tbl: no 5 x 15 data}.
In each case, we found the clique number was less than $n-2 = 13$.
By Lemma~\ref{lem: clique search}, we conclude that there is no $(a,1)$-equiangular system of $n=15$ vectors in an orthogonal geometry on $\mathbb{F}_p^5$ for $p \in \{3,7,19\}$.

\begin{table}
\begin{tabular}{rrrrr}
$p$ & $s$ & $a$ & $| V |$ & $\omega(\Gamma)$ \\ \hline
3&0&1&24&9\\ 
3&0&2&21&5\\ 
3&1&1&12&5\\ 
3&1&2&33&9\\ 
5&0&0&40&4\\ 
5&0&1&70&25\\ 
5&0&2&55&6\\ 
\end{tabular}
\qquad
\begin{tabular}{rrrrr}
$p$ & $s$ & $a$ & $| V |$ & $\omega(\Gamma)$ \\ \hline
5&0&3&55&6\\ 
5&0&4&145&25\\ 
5&1&0&60&6\\ 
5&1&1&30&5\\ 
5&1&2&45&8\\ 
5&1&3&45&8\\ 
5&1&4&105&5\\
\end{tabular}
\qquad
\begin{tabular}{rrrrr}
$p$ & $s$ & $a$ & $| V |$ & $\omega(\Gamma)$ \\ \hline 
7&0&0&98&4\\ 
7&1&0&98&4\\ 
19&0&8&722&3\\ 
19&0&11&722&3\\ 
19&1&8&722&3\\ 
19&1&11&722&3\\
~
\end{tabular}
\medskip
\caption{
Data for the computer-assisted proof of Theorem~\ref{thm: no Gerzon in dim 5}.
The second column describes the type of geometry on $\mathbb{F}_p^5$, where $s=0$ for square-type discriminant and $s=1$ otherwise.
For $a$ as in the third column we implemented the graph $\Gamma$ of Lemma~\ref{lem: clique search} with the GRAPE package for GAP~\cite{grape,gap}.
The number of vertices appears in the fourth column, and the last column gives the clique number as found by GRAPE.
}
\label{tbl: no 5 x 15 data}
\end{table}

It remains to consider $p=5$.
Turning to GAP again, we proceeded as above, testing all possible values of $a \in \mathbb{F}_5$.
Here again, we found the clique number was less than ${n-2}=13$ in all but two cases: for the geometry with square-type discriminant and $a \in \{1,4\}$, the clique number was $25$.
Hence, we are in one of these cases.
Taking $V$, $\Gamma$, and $K$ as in Lemma~\ref{lem: clique search}, there must be a maximal clique $M$ that contains $K$, and then $\Psi' := \{ \psi_1, \psi_2 \} \cup M$ contains all the vectors of the frame $\Psi = \{ \psi_1,\psi_2 \} \cup K$, so that $\operatorname{rank} \Psi' = 5$.
However, for both of the remaining cases we checked every maximal clique $M$ in $\Gamma$ of size $13 \leq |M| \leq 25$, and in each case we found $\operatorname{rank} \Psi' \in \{3,4\}$.
(There were 16 such maximal cliques for $a=1$, and 156 for $a=4$.)
This is a contradiction.
Therefore no such ETF exists.
By Proposition~\ref{prop: Gerzon}, neither does there exist an $(a,b)$-equiangular system of $15$ vectors with $a^2 \neq b$ in any orthogonal geometry of dimension~5 over a finite field of odd characteristic.
\end{proof}

\section*{Acknowledgments}
GRWG was partially supported by the Singapore Ministry of Education Academic Research Fund (Tier 1); grant numbers: RG29/18 and RG21/20.
JJ was supported by NSF DMS 1830066.
DGM was partially supported by AFOSR FA9550-18-1-0107 and NSF DMS 1829955.
This project began at the 2018 MFO Mini-Workshop on Algebraic, Geometric, and Combinatorial Methods in Frame Theory.
The authors thank the other participants, Matt Fickus, and Steve Flammia for comments that motivated the authors or provided insight.

\bigskip

\bibliographystyle{abbrv}
\bibliography{refsff2}

\end{document}